\documentclass[reqno]{article}

\usepackage{amssymb,amsmath,amsthm}
\usepackage{newpxtext,newpxmath} % commentare per tornare ai fonts abituali
\usepackage{mathtools}
\usepackage{amsfonts}
\usepackage{a4wide}
\usepackage[normalem]{ulem}
\usepackage{mathrsfs,enumerate}
\usepackage[bookmarks=false]{hyperref}

\usepackage{color}
\usepackage{pdfsync}

\usepackage[
color,
final]
{showkeys}   

\definecolor{refkey}{gray}{.3}
  \definecolor{labelkey}{gray}{.3}

\renewcommand*\showkeyslabelformat[1]{%
  \kern-8pt\fontsize{6}{2}\selectfont\sffamily#1}

\numberwithin{equation}{section}  % per cambiare la numerazione
\newtheorem{theorem}{Theorem}[section]  % per cambiare la numerazione
\newtheorem{lemma}[theorem]{Lemma}
\newtheorem{corollary}[theorem]{Corollary}
\newtheorem{proposition}[theorem]{Proposition}

\newtheorem{remark}[theorem]{Remark}
\newtheorem{example}[theorem]{Example}

\newtheorem{definition}[theorem]{Definition}

\newcommand{\N}{\mathbb{N}}

\renewcommand{\P}{\mathbb{P}}
\newcommand{\Q}{\mathbb{Q}}
\newcommand{\R}{\mathbb{R}}

\newcommand{\cA}{{\ensuremath{\mathcal A}}}

\newcommand{\cF}{{\ensuremath{\mathcal F}}}

\newcommand{\cK}{{\ensuremath{\mathcal K}}}

\newcommand{\cM}{{\ensuremath{\mathcal M}}}

\newcommand{\cP}{{\ensuremath{\mathcal P}}}

\newcommand{\cS}{{\ensuremath{\mathcal S}}}

\newcommand{\frB}{\mathfrak {B}}

\newcommand{\frF}{\mathfrak {F}}
\newcommand{\frG}{\mathfrak {G}}

\newcommand{\frS}{\mathfrak {S}}

\newcommand{\frH}{\mathfrak {H}}
\newcommand{\frI}{\mathfrak {I}}

\renewcommand{\aa}{{\boldsymbol a}}
\newcommand{\bb}{{\boldsymbol b}}

\newcommand{\ff}{{\boldsymbol f}}
\renewcommand{\gg}{{\boldsymbol g}}

\newcommand{\ii}{{\boldsymbol i}}

\newcommand{\xx}{{\boldsymbol x}}

\newcommand{\yy}{{\boldsymbol y}}
\newcommand{\sfd}{{\mathsf d}}

\newcommand{\sfD}{{\mathsf D}}

\newcommand{\sfX}{{\mathsf X}}
\newcommand{\sfY}{{\mathsf Y}}
\newcommand{\sfZ}{{\mathsf Z}}
\newcommand{\sfW}{{\mathsf W}}

\newcommand{\rmG}{{\mathrm G}}
\newcommand{\rmL}{{\mathrm L}}

\newcommand{\rmT}{{\mathrm T}}

\newcommand{\supp}{\mathop{\rm supp}\nolimits}   % supporto 
\newcommand{\restr}[1]{\lower3pt\hbox{$|_{#1}$}}
\newcommand{\Restriction}[1]{\lower3pt\hbox{$|_{#1}$}}  %\Restriction{arg}  restrizione ad arg
\newcommand{\eps}{\varepsilon}  
\newcommand{\nchi}{{\raise.3ex\hbox{$\chi$}}}
\renewcommand{\d}{{\mathrm d}}

\newcommand{\Dom}[1]{\ensuremath{\mathrm{Dom}(#1)}}
\newcommand{\Graph}[1]{\ensuremath{\mathrm{Graph}(#1)}}

\newcommand{\lipmp}[1]{{%\mathrm{lip}
\mathfrak {lip}\kern-1pt^\mp\kern-1pt#1}}
\newcommand{\lipm}[1]{{%\mathrm{lip}
\mathfrak {lip}\kern-1pt^-\kern-1pt#1}}
\newcommand{\lipp}[1]{{%\mathrm{lip}
\mathfrak {lip}\kern-1pt^+\kern-1pt#1}}

\newcommand{\cPg}{\cP^{\rm det}}

\newcommand{\CCmaps}[2]{\operatorname{CG}(#1;#2)}

\newcommand{\tCC}{{\upshape CG}\relax }

\newcommand{\mres}{\mathbin{\vrule height 1.6ex depth 0pt width
0.13ex\vrule height 0.13ex depth 0pt width 1.3ex}}

\begin{document}
 \title{Continuous transformations of probability measures and their transport representations}
 \author{Hugo Lavenant\footnote{Bocconi University, Department of Decision Sciences and BIDSA, Milan, (Italy). Email:
\textsf{hugo.lavenant@unibocconi.it}}~~and Giuseppe Savaré\footnote{Bocconi University, Department of Decision Sciences and BIDSA, Milan (Italy). Email: \textsf{giuseppe.savare@unibocconi.it}
Partially supported by funding from the European
Research Council (ERC) under the European Union’s Horizon Europe
research and innovation programme (grant agreement No. 101200514,
project acronym OPTiMiSE).
 Views and opinions expressed are however those of the author(s) only and do not necessarily reflect those of the European Union or the European Research Council Executive Agency. Neither the European Union nor the granting authority can be held responsible for them. 
}}
\date{\today}
\maketitle
\begin{abstract}
Given a function $F$ transforming a probability measure $\mu$ into another one $F(\mu)$, we study the existence and regularity of a transport representation of it. That is, we ask whether we can represent the image $F(\mu)$ of the input probability measure $\mu$ as the push-forward of $\mu$ by a map $\ff(\cdot,\mu)$ which may depend on $\mu$; and furthermore, how regular $\ff$ can be chosen depending on $F$. 
Even if $F$ is continuous and a transport representative exists, it cannot necessarily be chosen in a continuous way; however, if $F$ is Lipschitz continuous with respect to the Wasserstein distance, then $\ff$ can be chosen continuous. We provide several examples to illustrate the sharpness of our assumptions.  
This question is motivated by approximation results for transformations of probability distributions with transformers.
\end{abstract}

\section{Introduction}

We analyze transformations of probability measures: given two Polish spaces $\sfX$ and $\sfY$, we consider a map $F : \cP(\sfX) \to \cP(\sfY)$ which transforms a probability distribution $\mu \in \cP(\sfX)$ into another probability distribution $F(\mu) \in \cP(\sfY)$. We are interested in a transport representative $\ff : \sfX \times \cP(\sfX) \to \sfY$ such that $\ff(\cdot, \mu) : \sfX \to \sfY$ pushes $\mu$ onto $F(\mu)$. In equation, we want to have 
\begin{equation}
\label{eq:intro_transport_rep}
F(\mu) = \ff(\cdot,\mu)_\sharp \mu \qquad \text{for every } \mu \in \cP(\sfX),
\end{equation}
where $g_\sharp \mu$ is the push-forward of the measure $\mu$ by the map $g$, defined by $g_\sharp \mu(A) = \mu(g^{-1}(A))$ for any Borel set $A$. That is, we want $F(\mu)$ to be the push-forward of $\mu$ by a transport map $\ff(\cdot,\mu)$, but the map is allowed to depend on $\mu$. 

While it is clear that, if a transport representative $\ff$ exists, then $F$ should inherit the regularity of $\ff$, in this article we investigate the reverse problem: if $F$ is given, can we find a $\ff$ such that~\eqref{eq:intro_transport_rep} holds, and how regular can $\ff$ be found? We develop a theoretical framework to prove existence of a (continuous) transport representative, while presenting several examples to discuss the sharpness of our assumptions. 

There is an obvious obstruction to the existence of a transport representative. 
The constant map $F(\mu) = \lambda$, where $\lambda$ is any fixed measure which is not a Dirac mass, cannot be represented as in~\eqref{eq:intro_transport_rep}, as no transport map can push a Dirac mass onto something else than a Dirac mass. On $\sfX = \sfY = \R^d$, if $h : \R^d \to \R_+$ is a continuous non-negative function which integrates to $1$ then $F : \mu \mapsto \mu * h$ provides another counterexample, as $F(\delta_x) = h(y-x) \d y$.
More generally, while it is true that an atomless measure can be transported to any measure, the picture is more intricate if $\mu$ has an atomic part. In our study we will take as an assumption that, at least for empirical measures $\mu$, there exists at least one transport map between $\mu$ and $F(\mu)$.

\subsection{Motivations}

\paragraph{Approximation of transformations of measures by transformers}
Recall that neural networks can be seen as parametric classes of functions $(f_\theta : \sfX \to \sfY)_{\theta}$ where the parameter $\theta$ encodes the weights of the network. When $\sfX$ and $\sfY$ are Euclidean spaces, they have the ability, with sufficient parameters, to approximate any reasonable function $f : \sfX \to \sfY$: they are ``universal approximators'', see e.g. the survey \cite{pinkus1999approximation} and references therein. 

Transformers, introduced in \cite{vaswani2017attention}, are neural networks which take as input not a single element of $\sfX$, but a collection $(x_i)_{i=1, \ldots, m}$ whose size $m$ is not fixed a priori. The $x_i$'s, which belong to $\sfX$ are called ``tokens'': they correspond to vectorial embeddings of groups of letters in natural language processing, or patches of an image in vision applications. The transformer processes the whole family of tokens at once and outputs a new collection $(y_i)_{i=1, \ldots, m}$ of tokens. Specifically $y_i = \ff_\theta(x_i, (x_\ell)_{\ell \neq i})$ for some parametric map $\ff_\theta$ which takes as input $x_i$, the token, as well as the ``context'' $(x_\ell)_{\ell \neq i}$, which is the collection of all other tokens. Again here the parameter $\theta$ encodes the weights of the network.

For encoder transformers, the map $\ff_\theta$ is permutation invariant, meaning $\ff_\theta(x_i, (x_\ell)_{\ell \neq i})$ does not depend on the ordering of the context $(x_\ell)_{\ell \neq i}$. Given this invariance it is natural to encode the collection of tokens as an empirical measure $\mu = \frac{1}{m} \sum_{i=1}^m \delta_{x_i} \in \cP(\sfX)$ and write $y_i = \ff_\theta(x_i, \mu)$, so that the transformer is a parametric map from $\sfX \times \cP(\sfX)$ to $\sfY$, with the second argument restricted to being an empirical measure. With $\nu = \frac{1}{m} \sum_{i=1}^m \delta_{y_i} \in \cP(\sfY)$, we have $\nu = \ff_\theta(\cdot,\mu)_\sharp \mu$ so that the transformer architecture induces a map $F : \cP(\sfX) \to \cP(\sfY)$ defined on empirical measures of the form~\eqref{eq:intro_transport_rep}.
This point of view has been put forward in \cite{vuckovic2020mathematical,sander2022sinkformers} and is especially fruitful to handle the mean-field limit, when the number of token diverges; see also \cite{geshkovski2025mathematical}. 

In this context one can also study the universal approximation properties of these neural networks. For families of tokens with given size, we refer in this direction to \cite{yun2019transformers,alberti2023sumformer}. Seeing transformers as inducing transformations at the level of probability measures, \cite{geshkovski2024measure} showed that, given a finite collection $(\mu_j,\nu_j)_j$ of input/output measures, under reasonable assumptions, one can build a transformer architecture $\ff_\theta$ such that $\ff_\theta(\cdot,\mu_j)_\sharp \mu_j \simeq \nu_j$ for all $j$. One of their assumption is the existence of a transport map, depending on $j$, sending $\mu_j$ onto $\nu_j$ for all $j$. 
The work \cite{furuya2024transformers} rather studied the problem of approximating a map from $\sfX \times \cP(\sfX)$ to$ \cP(\sfY)$: they show that, in a suitable sense, any jointly continuous map $\ff : \sfX \times \cP(\sfX) \to \sfY$ can be approximated by a transformer architecture. 
Thus any map $F : \cP(\sfX) \to \cP(\sfY)$ which admits a jointly continuous transport representative can be approximated with a transformer architecture globally over $\cP(\sfX)$: if $\ff$ is the transport representative of $F$, we simply approximate $\ff$ with a transformer. 
Hence the question which is the starting point of our investigation: which are the maps $F : \cP(\sfX) \to \cP(\sfY)$ which admit a (continuous) transport representative $\ff : \sfX \times \cP(\sfX) \to \sfY$? 

\paragraph{Lagrangian representations of evolutions in Wasserstein spaces}
The present study is also related to evolutions in the Wasserstein space of probability measures. In \cite{cavagnari2023lagrangian}, evolutions of probability measures leading to a semi-group $(F_t)_{t \geq 0}$ of maps $\cP(\sfX) \to \cP(\sfX)$ satisfying $F_{t+s} = F_t \circ F_s$ are considered. Under a structural assumption on the generator called \emph{total dissipativity}, a representation of the semigroup as $F_t(\mu) = f_t(\cdot,\mu)_\sharp \mu$, similar to~\eqref{eq:intro_transport_rep}, is proved. This evolution can also be described as a Lagrangian evolution, that is, with a family $\hat{\frF}_t$ of transformations of a $\sfX$-valued random variable $X$ into another $\sfX$-valued random variable $\hat{\frF}_t(X)$ such that, if the law of $X$ is $\mu$, then the law of $\hat{\frF}_t(X)$ is $F_t(\mu)$.

In the present work, we drop the dependence on the temporal variable and only study the structure of maps satisfying~\eqref{eq:intro_transport_rep}. Following \cite{cavagnari2023lagrangian} we also make the link with Lagrangian representations, that is, representations of transformations of measures as transformations of random variables.   

\subsection{A snapshot of the results}

We need a topological setting to discuss the continuity of $F$ and its transport representative, we refer to Section~\ref{sec:prelim} for the full picture and some reminders. Our theory covers the setting $\sfX = \R^{d_1}$ and $\sfY = \R^{d_2}$ (possibly with $d_1 \neq d_2$), and more generally any Polish space endowed with its Borel $\sigma$-algebra. The spaces $\cP(\sfX)$ and $\cP(\sfY)$ of Borel probability measures on respectively $\sfX$ and $\sfY$ are endowed with the topology of narrow convergence, generated by the duality pairing with bounded continuous function, and the associated Borel $\sigma$-algebra.

As already commented above, not every map has a transport representative. Indeed,
if $F : \cP(\sfX) \to \cP(\sfY)$ has a transport representative, then necessarily we have a non-splitting property: atoms do not split, that is, a (finite) combination of Dirac masses cannot be split. We will take it as the minimal assumption needed to make our theory work. We call $\mu \in \cP(\sfX)$ an empirical measure if it is a finite sum of Dirac masses with rational coefficients.

\begin{definition}
We say that $F : \cP(\sfX) \to \cP(\sfY)$ is non-splitting on empirical measures if: for any measure $\mu = \sum_{i=1}^m a_i \delta_{x_i} \in \cP(\sfX)$ with $x_1, \ldots, x_m$ distinct elements in $\sfX$ and $a_1, \ldots, a_m \in \mathbb{Q}_+$ for all $i$, there exist $y_1, \ldots, y_m \in \sfY$ (not necessarily distinct) such that 
\begin{equation}
\label{eq:tran_rep_discrete}
F(\mu) = F \left( \sum_{i=1}^m a_i \delta_{x_i} \right) = \sum_{i=1}^m a_i \delta_{y_i}. 
\end{equation}
\end{definition}

Equivalently, for any empirical measure $\mu$, we assume that there exists $f : \sfX \to \sfY$ such that $F(\mu) = f_\sharp \mu$.

\begin{remark}
\label{rmk:ns_NP}
Checking that $F$ is non-splitting on empirical measures can be, algorithmically speaking, a delicate task. Consider $\mu = \sum_{i=1}^m a_i \delta_{x_i}$ with the $a_i$ rational and the $x_i$ all distinct, and $\nu = \frac{1}{2}(\delta_{y_1} + \delta_{y_2})$ with $y_1 \neq y_2$. Determining if there exists $f$ with $f_\sharp \mu = \nu$ means finding a partition $(I_1,I_2)$ of $\{ 1,\ldots,m \}$ such that $\sum_{i \in I_1}  a_i = \sum_{i \in I_2} a_i$. This is an instance of the partition problem, which is NP-complete \cite{garey1979computers}. 
\end{remark}

We will not focus on conditions guaranteeing the non-splitting property. Rather, assuming that it holds, we want to understand if a transport representative which depends continuously on the measure $\mu$ can be constructed.  
In this context our first main result reads as follows.

\begin{theorem}
\label{thm:main_continuity}
For Polish spaces $\sfX, \sfY$, assume $F : \cP(\sfX) \to \cP(\sfY)$ is non-splitting on empirical measures and continuous. 

Then there exists $\ff : \sfX \times \cP(\sfX) \to \sfY$ Borel such that $F(\mu) = \ff(\cdot, \mu)_\sharp \mu$ for all $\mu \in \cP(\sfX)$.
\end{theorem}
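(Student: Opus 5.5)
Our plan is to first show that, for every $\mu\in\cP(\sfX)$, the set
\[
\Gamma(\mu) := \{\gamma\in\cP(\sfX\times\sfY) : \pi^1_\sharp\gamma=\mu,\ \pi^2_\sharp\gamma=F(\mu),\ \gamma \text{ is concentrated on the graph of a Borel map}\}
\]
is nonempty. We then select $\gamma_\mu$ measurably in $\mu$ and disintegrate it jointly measurably in $(x,\mu)$.

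\textbf{Step 1: couplings at every $\mu$.} We start by passing to the limit from empirical measures. Let $\mu\in\cP(\sfX)$ and choose empirical $\mu_n\to\mu$ narrowly. By the non-splitting assumption there are maps $f_n$ with $F(\mu_n)=(f_n)_\sharp\mu_n$. Set $\gamma_n=(\mathrm{id},f_n)_\sharp\mu_n$. Its marginals $\mu_n$ and $F(\mu_n)$ converge to $\mu$ and $F(\mu)$, using continuity of $F$. Hence $(\gamma_n)$ is tight and has a limit point $\gamma\in\Pi(\mu,F(\mu))$.

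The difficulty is that a limit of deterministic couplings need not be deterministic, so $\gamma$ itself may not come from a map. The key observation is how we handle this.
\begin{itemize}
\item If $\mu$ is atomless, no limiting argument is needed. A Borel transport map from $\mu$ to any $\nu$ exists in Polish spaces, by standard isomorphism/measurable-selection facts.
\item In general, write $\mu=\mu^a+\mu^d$, with $\mu^d=\sum_k b_k\delta_{x_k}$ the atomic part. A limit coupling $\gamma$ of graph couplings satisfies the following: $\gamma$ restricted to $\{x_k\}\times\sfY$ is $b_k\delta_{x_k}\otimes\sigma_k$. Non-splitting is then needed to show that, after possibly modifying $\gamma$, each atom goes to a single point.
\end{itemize}
To arrange this, we choose the approximants $\mu_n$ to contain the atoms $x_k$ with weights $b_k^{(n)}\to b_k$, e.g. $b^{(n)}_k$ rational. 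We also check that the mass of $F(\mu_n)$ sent from $x_k$ is a single Dirac $\delta_{y^n_k}$ of mass $b^{(n)}_k$. Along a subsequence $y_k^n\to y_k$, which requires tightness of the second marginals. Then $\gamma(\{x_k\}\times\cdot)=b_k\delta_{y_k}$. The remaining part of $\gamma$ has atomless first marginal $\mu^a$ and second marginal $F(\mu)-\sum_k b_k\delta_{y_k}$. This second marginal is a nonnegative measure because it is a marginal of $\gamma$. We can therefore replace the remaining part by a map coupling. Concatenating gives a Borel $f$ with $f_\sharp\mu=F(\mu)$.

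\textbf{Step 2: measurable dependence on $\mu$.} The set-valued map $\mu\mapsto\Pi(\mu,F(\mu))$ has closed graph because $F$ is continuous. The subset of deterministic couplings is not closed, however. To avoid selecting among maps directly, we plan a parametrization trick. Fix an atomless reference space, e.g. $([0,1],\mathcal L)$. Represent $\mu$ Borel-measurably as $\mu=(X_\mu)_\sharp\mathcal L$, with $(\mu,t)\mapsto X_\mu(t)$ jointly Borel; this is a measurable version of Skorokhod's representation, Blackwell--Dubins. Similarly represent each coupling. We then work with the Borel set $\{(\mu,\gamma):\gamma\in\Gamma(\mu)\}$. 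Showing that this set is Borel, or at least analytic, is delicate; we intend to characterize ``concentrated on a graph'' via a countable family of conditions. Once this is done, the Kuratowski--Ryll-Nardzewski theorem, or the Jankov--von Neumann uniformization theorem, gives a universally measurable selection $\mu\mapsto\gamma_\mu$. We then upgrade to a Borel selection, either by choosing a countable construction explicitly from Step 1 or by noting that the statement only requires Borel measurability modulo modification.

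\textbf{Step 3: the map.} Disintegrate $\gamma_\mu=\int\delta_{x}\otimes\gamma_{\mu,x}\,\d\mu(x)$, measurably in $(x,\mu)$ via a kernel-disintegration theorem on Polish spaces. Since $\gamma_\mu$ is deterministic, $\gamma_{\mu,x}$ is a Dirac for $\mu$-a.e. $x$. We define $\ff(x,\mu)$ as the barycenter-free choice ``the point where $\gamma_{\mu,x}$ is a Dirac'', and a fixed $\bar y$ otherwise. This is Borel since $\{\sigma:\sigma\text{ Dirac}\}$ is closed and $\delta_y\mapsto y$ is a homeomorphism. Then $\ff(\cdot,\mu)_\sharp\mu=F(\mu)$.

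The main obstacle is Step 2: the joint Borel measurability of the selection, given that the deterministic couplings do not form a closed set.
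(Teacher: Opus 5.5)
Your Steps 1 and 3 are the paper's own arguments. Step 1 is its existence proof: rational weights on the atoms, tightness to extract limits $y_k$ of the atom images, and the atomless lemma for the diffuse remainder. Step 3 is its universal-disintegration construction of a jointly Borel $\ff$.

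The gap is Step 2, which is the whole content of the measurability claim and which you leave as a plan.
\begin{itemize}
\item \textbf{Kuratowski--Ryll-Nardzewski is not yet applicable.} It needs nonempty \emph{closed} values in a \emph{Polish} space, plus weak measurability. In $\cP(\sfX\times\sfY)$ the sets $\Gamma(\mu,F(\mu))\cap\cPg(\sfX\times\sfY)$ are not closed. You are missing the observation that lets the paper proceed: $\cPg(\sfX\times\sfY)$ is a $G_\delta$. Indeed, $\gamma$ is concentrated on a graph iff every $\sigma\in\Gamma(\gamma,\gamma)$ concentrated on $\{x=x'\}$ is concentrated on $\{y=y'\}$. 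So its complement is $\bigcup_m H_m$, where $H_m$ is the set of $\gamma$ admitting such a $\sigma$ with $\|\sfd_\sfY(y,y')\|_{L^p(\sigma)}\ge 1/m$, and $H_m$ is closed by compactness and hemicontinuity of $\Gamma$. Hence $\cPg$ is Polish and the values are relatively closed in it. Weak measurability, the remaining hypothesis, you never address.
\item \textbf{Your fallbacks do not work.} Jankov--von Neumann gives only a $\sigma(\Sigma^1_1)$-measurable selection. ``Borel modulo modification'' is unavailable: $F(\mu)=\ff(\cdot,\mu)_\sharp\mu$ must hold for \emph{every} $\mu$, and there is no measure on $\cP(\sfX)$ whose null sets you could discard. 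Step 1 cannot be made Borel as it stands, since it extracts non-constructive limit points.
\item \textbf{The Blackwell--Dubins parametrization plays no role.}
\end{itemize}

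Weak measurability is where the real difficulty sits. For $\tilde B$ open, you cannot replace $\{\mu:\Gamma(\mu,F(\mu))\cap\tilde B\cap\cPg\neq\emptyset\}$ by $\bigcap_m\{\mu:\Gamma(\mu,F(\mu))\cap\tilde B\cap H_m^c\neq\emptyset\}$, because these sets are not closed. Take $\mu=\frac12\delta_0+\frac12\,\mathcal U$ and $\nu=\frac12\delta_5+\frac12\,\mathcal U$, with $\mathcal U$ the uniform probability on $[1,2]$. Let $\tilde B$ be the complement of the closure of the deterministic elements of $\Gamma(\mu,\nu)$. Couplings that send mass $\epsilon\to0$ of the atom at $0$ into $[1,2]$ lie in $\Gamma(\mu,\nu)\cap\tilde B\cap H_m^c$ for every $m$. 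Yet $\tilde B$ contains no deterministic element of $\Gamma(\mu,\nu)$.

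A clean way to finish, building directly on your Step 1, is to select only the images of the atoms.
\begin{enumerate}
\item Take a Borel enumeration $\sum_k a_k(\mu)\delta_{x_k(\mu)}$ of the atomic part of $\mu$. Let $E$ be the set of $(\mu,\yy)\in\cP(\sfX)\times\sfY^\N$ with $\sum_k a_k(\mu)\delta_{y_k}\le F(\mu)$, and $y_k=\bar y$ when $a_k(\mu)=0$.
\item $E$ is Borel, and its sections are nonempty by Step 1.
\item Its sections are \emph{compact}: each $y_k$ must be one of the finitely many atoms of $F(\mu)$ of mass at least $a_k(\mu)$.
\item Novikov's uniformization theorem for Borel sets with compact sections then gives a Borel map $\mu\mapsto\yy(\mu)$.
\item Push the diffuse part of $\mu$ onto $F(\mu)-\sum_k a_k(\mu)\delta_{y_k(\mu)}$ by a parametrized atomless lemma: Borel isomorphisms with subsets of $[0,1]$, then the distribution function, then the quantile function. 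This map is jointly Borel in $(x,\mu)$, and gluing it with the atom images gives the required $\ff$.
\end{enumerate}
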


There are two notable consequences: that the non-splitting property on empirical measures guarantees existence of the transport representative $\ff$ on \emph{all} measures; and that $\ff$ can be chosen as a Borel map. However, the transport representative $\ff(x,\mu)$ may not depend continuously on $\mu$ (see Example~\ref{ex:t_rep_non_c}), or may not depend continuously on $x$ (see Example~\ref{ex:continuity_x}). 

Next, we show that if we assume a more quantitative version of continuity, we can obtain much stronger consequences. For an exponent $p \in [1, + \infty)$, we denote by $W_p$ the $p$-Wasserstein distance, whose precise definition can be found in Section~\ref{sec:prelim}, it is defined on $\cP_p(\sfX)$ and $\cP_p(\sfY)$ the space of probability distributions with finite $p$ moments. The definition of length space is recalled in Section~\ref{sec:main_results}, and $\supp(\mu)$ denotes the topological support of a measure $\mu$.

\begin{theorem}
\label{thm:main_Lipschitz}
Let $\sfX$ a Polish length space and $\sfY$ a Polish space.
Assume $F : \cP_p(\sfX) \to \cP_p(\sfY)$ is non-splitting on empirical measures and Lipschitz continuous, meaning there exists $L \geq 0$ such that $W_p(F(\mu),F(\mu')) \leq L W_p(\mu,\mu')$ for all $\mu,\mu' \in \cP_p(\sfX)$.

Then there exists $\ff : \sfX \times \cP(\sfX) \to \sfY$ Borel such that $F(\mu) = \ff(\cdot, \mu)_\sharp \mu$ for all $\mu \in \cP(\sfX)$, and which is jointly continuous in the sense: for every sequences $(x_n)_{n \in \N}$ and $(\mu_n)_{n \in \N}$,
\begin{equation*}
x_n \in \supp(\mu_n),\; x_n \to x \text{ in } \sfX,\; \mu_n \to \mu \text{ in } \cP_p(\sfX),\; x \in \supp(\mu) \;\Rightarrow\; \ff(x_n,\mu_n) \to \ff(x,\mu) \text{ in } \sfY.
\end{equation*}
Moreover, the restriction of $\ff(\cdot,\mu)$ to $\supp(\mu)$ is uniquely determined and $L$-Lipschitz.
\end{theorem}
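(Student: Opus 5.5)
The strategy is to build the map first on empirical measures, show it is Lipschitz on the support with a constant independent of the measure, and then pass to limits using the length structure of $\sfX$. For Lipschitz $F$ we expect a much more rigid picture than in Theorem~\ref{thm:main_continuity}: on $\supp(\mu)$ the representative should be forced, so no measurable selection is needed there.

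\textbf{Step 1 (empirical measures, uniqueness and Lipschitz bound).} Take $\mu=\sum_i a_i\delta_{x_i}$ with rational weights and distinct points. Non-splitting gives $y_i$ with $F(\mu)=\sum_i a_i\delta_{y_i}$. We aim to show that, for $|x_i-x_j|$ small relative to the geometry, $\sfd(y_i,y_j)\le L\,\sfd(x_i,x_j)$, and that the $y_i$ are uniquely determined.

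The idea is to use the length structure. Move the single atom $x_i$ along an almost-geodesic $\gamma$ toward a nearby point, discretized into small steps. Each step $\mu_s\to\mu_{s'}$ has
\[
W_p(\mu_s,\mu_{s'})=a_i^{1/p}\sfd(\gamma_s,\gamma_{s'}),
\]
so
\[
W_p(F(\mu_s),F(\mu_{s'}))\le La_i^{1/p}\sfd(\gamma_s,\gamma_{s'}).
\]
Both images are atomic with weights coming from the $a_j$ (non-splitting). For tiny steps, the optimal coupling of two such atomic measures must essentially move only mass of size about $a_i$. This needs a small combinatorial lemma: the cost of moving mass $a_i$ by distance $D$ is $a_i D^p$, and any other matching costs at least a fixed positive quantity because the atoms are separated.

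Tracking the labels along the path, one obtains a continuous lifted curve $\tilde y_s$ with speed at most $L$. The remaining atoms stay fixed in the limit. Uniqueness of the labeling then follows from the same rigidity: two different choices of $(y_i)$ compatible with $F(\mu)$ cannot both be connected continuously. More concretely, I would define $\ff(x_i,\mu)$ as the point obtained by this lifting, and check independence of the choices.

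\textbf{Step 2 (general measures).} For general $\mu\in\cP_p(\sfX)$, approximate by empirical measures $\mu_n\to\mu$ in $W_p$. The maps $\ff(\cdot,\mu_n)$ restricted to $\supp\mu_n$ are $L$-Lipschitz. For $x\in\supp\mu$, choose $x_n\in\supp\mu_n$ with $x_n\to x$ and show that $\ff(x_n,\mu_n)$ is Cauchy.

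For this I would use a stability estimate: if $\mu,\mu'$ are empirical and coupled optimally by $\pi$, then
\[
\int \sfd(\ff(x,\mu),\ff(x',\mu'))^p\,\d\pi\le C\,W_p(\mu,\mu')^p .
\]
This estimate is proved by interpolating along a discretized path between $\mu$ and $\mu'$ and applying Step 1 step by step. Combined with the uniform Lipschitz bound, which rules out concentration of mass far away, this gives pointwise convergence on supports. The resulting limit is $L$-Lipschitz on $\supp\mu$ and pushes $\mu$ onto $F(\mu)$ by continuity of $F$. Joint continuity, in the sense stated, follows from the same estimate. Off the support, define $\ff$ via Theorem~\ref{thm:main_continuity}, or by any Borel extension; Borel measurability on the set $\{(x,\mu):x\in\supp\mu\}$ comes from the limit construction.

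\textbf{Main obstacle.} The hard part is the rigidity in Step 1 together with the stability estimate. One has to show that a Lipschitz family of atomic measures with non-split weights admits a consistent Lipschitz labeling, even when atoms collide ($y_i=y_j$) and weights coincide. At collisions the relabeling ambiguity is real, so I would try to prove that any labeling gives the same value of $\ff$ at support points. The worry is that coincident atoms allow two distinct continuations; this is resolved by showing the continuation is determined by $F$ along every approximating path, and is therefore path independent.
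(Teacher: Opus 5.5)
Your proposal has a genuine gap in Step 1, at the point you flag yourself. For empirical measures with non-generic weights, the labels $y_i$ are \emph{not} determined by $F(\mu)$, and your path-lifting has no anchor that fixes them.

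Take $F(\mu)=\mu$ and $\mu=\tfrac12(\delta_{x_1}+\delta_{x_2})$ with $x_1\neq x_2$. Both the identity and the swap $x_1\leftrightarrow x_2$ push $\mu$ onto $F(\mu)$, and both are $1$-Lipschitz on $\supp(\mu)$. Only continuity in $\mu$ selects the identity. The same ambiguity arises along a path whenever image atoms collide or the weights have coincident subset sums. That is exactly where your combinatorial lemma (other matchings cost a fixed positive amount because atoms are separated) breaks down, and your claim of ``path independence'' is asserted, not proved.

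The missing idea is to use \emph{generic} weights (Definition~\ref{def:generic_weights}). For these:
\begin{itemize}
\item the restriction of the representative to the support is unique (Corollary~\ref{crl:atmost_one_f_sg});
\item $\frG_\aa^{-1}$ is continuous (Proposition~\ref{prop:super_generic_injective}), so in $F(\frG_\aa(\xx(t)))=\frG_\aa(\yy(t))$ the labels $\yy(t)$ are automatically continuous, even through collisions;
\item $W_p$ locally coincides with the weighted $\ell_p$ distance on labeled configurations (Proposition~\ref{proposition_local_isometry}).
\end{itemize}
The paper defines the representative only on generic empirical measures and extends it by density (Proposition~\ref{prop:extension_TL}); equidispersivity guarantees the limits are still graphs. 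Every other measure, including non-generic empirical ones, receives its representative as a limit. This is also the only sense in which ``uniquely determined'' in the statement can hold.

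Second, even with generic weights, moving a single atom does not give the constant $L$, because it is false that the remaining image atoms stay fixed. On $\sfX=\sfY=\R$, the map $F(\mu)=\big(x\mapsto x+\int z\,\d\mu(z)\big)_\sharp\mu$ is non-splitting and $2$-Lipschitz, yet moving one source atom translates every image atom.

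What your estimate actually yields is $\sum_k a_k|\dot y_k|^p\le L^p a_i|\dot\gamma|^p$. This gives only $\sfd_\sfY(y_i,y_j)\le L\big(1+(a_i/a_j)^{1/p}\big)\sfd_\sfX(x_i,x_j)$. Moving both atoms toward each other still gives a constant strictly larger than $L$.

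The sharp bound comes from the coupling estimate~\eqref{eq:regularity_fundamental}, which holds for \emph{arbitrary}, non-optimal $\eta\in\Gamma(\mu,\mu')$. The paper proves it in Proposition~\ref{prop:estimate_discrete} by moving all atoms of $\eta$ simultaneously along almost-geodesics, after perturbing the weights of $\eta$ (which split the atoms of $\mu$) to be generic. $L$-Lipschitzianity on $\supp(\mu)$ then follows by inserting a coupling in $\Gamma(\mu,\mu)$ that swaps mass between neighbourhoods of two support points.

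Once that estimate is available, your Step~2 Cauchy argument is a workable alternative to the Lusin-type Theorem~\ref{thm:Gconv_TLpconv}. That argument uses lower semicontinuity of $(x,\mu)\mapsto\mu(B(x,r))$ together with the Lipschitz bound on supports. For the Borel claim you still need that $\{(x,\mu):x\in\supp(\mu)\}$ is a Borel (indeed $G_\delta$) set.
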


Compared to the previous result, we obtain a transport representative $\ff(x,\mu)$ which is continuous in both $x$ and $\mu$. While continuity of $\ff$ in $\mu$ could be expected from the one of $F$, continuity in $x$ is more surprising as we did not assume it directly. Continuity holds only at $(x,\mu)$ with $x \in \supp(\mu)$, and this condition is sharp (see Example~\ref{ex:lift_discontinuous_supp}). The length space assumption can be relaxed: it is enough that the intrinsic length distance on $\sfX$ is bi-Lipschitz equivalent to $\sfd_\sfX$, see Remark~\ref{rmk:length_space}.%

Building on Theorem~\ref{thm:main_Lipschitz}, we also derive the following approximation result for the transformation $F$, proved in Section~\ref{sec:approximation}.

\begin{theorem}
\label{thm:approximation}
Let $\sfX$ be a Polish length space and $\sfY$ a Polish space. Assume $F : \cP_p(\sfX) \to \cP_p(\sfY)$ is non-splitting on empirical measures and $L$-Lipschitz. Let $\cA$ be a family of mappings $\gg : \sfX \times \cP_p(\sfX) \to \sfY$, and assume that the uniform closure of $\cA$ contains every jointly continuous mapping $\sfX \times \cP_p(\sfX) \to \sfY$ which is $L$-Lipschitz in the first variable.

Then for every $\varepsilon > 0$ there exists $\gg \in \cA$ such that
\begin{equation*}
W_p(\gg(\cdot,\mu)_\sharp \mu, \, F(\mu)) < \varepsilon \qquad \text{for every } \mu \in \cP_p(\sfX).
\end{equation*}
\end{theorem}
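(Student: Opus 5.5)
The plan is to reduce Theorem~\ref{thm:approximation} to Theorem~\ref{thm:main_Lipschitz}. The idea is to produce a jointly continuous map $\tilde\ff:\sfX\times\cP_p(\sfX)\to\sfY$ that is $L$-Lipschitz in $x$ and represents $F$. Once we have it, the density hypothesis on $\cA$ gives $\gg\in\cA$ with $\sup_{x,\mu}\sfd_\sfY(\gg(x,\mu),\tilde\ff(x,\mu))<\varepsilon$. The coupling $(\gg(\cdot,\mu),\tilde\ff(\cdot,\mu))_\sharp\mu$ then yields
\begin{equation*}
W_p(\gg(\cdot,\mu)_\sharp\mu,F(\mu))\le \Big(\int \sfd_\sfY^p(\gg(x,\mu),\tilde\ff(x,\mu))\,\d\mu(x)\Big)^{1/p}\le \sup_{x,\mu}\sfd_\sfY(\gg,\tilde\ff)<\varepsilon,
\end{equation*}
uniformly in $\mu$. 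To get the strict inequality, pick $\gg$ at distance less than $\varepsilon/2$. The whole difficulty is therefore building $\tilde\ff$ defined and continuous on all of $\sfX\times\cP_p(\sfX)$, not only on the graph of the support.

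Theorem~\ref{thm:main_Lipschitz} provides $\ff$ such that, on $\supp(\mu)$, $\ff(\cdot,\mu)$ is uniquely determined and $L$-Lipschitz. It also provides joint continuity along sequences with $x_n\in\supp(\mu_n)$. Define the closed set $\cS=\{(x,\mu): x\in\supp(\mu)\}$; it is closed because narrow convergence and lower semicontinuity of supports give $\liminf$ inclusion. The restriction $\ff|_\cS$ is continuous on $\cS$. We must extend it to the whole space so that, for each fixed $\mu$, the extension is still $L$-Lipschitz in $x$, and so that the extension is jointly continuous. Only the values on $\supp(\mu)$ matter for the push-forward, so any such extension represents $F$.

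The main obstacle is this extension, because $\sfY$ is a general Polish space: there is no McShane or Kirszbraun extension, and no guarantee that Lipschitz extensions exist at all. I would first look for a structural reason to avoid extending in $\sfY$. One route is to note that the assumption on $\cA$ is phrased with maps $\sfX\times\cP_p(\sfX)\to\sfY$. One could then use a parametrized retraction-type construction in $\sfX$: in a length space, map $x$ to a "nearest point" of $\supp(\mu)$. However, nearest-point maps are neither continuous nor Lipschitz in general, so this fails as stated.

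A more robust first attempt is to avoid the extension by replacing $\mu$ with a regularized measure. Choose a continuous map $\mu\mapsto\mu^\delta$ with $W_p(\mu,\mu^\delta)\le\delta$ and $\supp(\mu^\delta)=\sfX$ (or at least containing a neighbourhood whose complement is irrelevant), for instance by mixing $\mu$ with a small full-support reference measure: $\mu^\delta=(1-\eta)\mu+\eta\,\sigma$, with $\eta$ depending continuously on the moment of $\mu$. Then set $\tilde\ff(x,\mu)=\ff(x,\mu^\delta)$. Because $\supp(\mu^\delta)=\sfX$, the joint continuity of Theorem~\ref{thm:main_Lipschitz} applies at every point, and $\tilde\ff(\cdot,\mu)$ is $L$-Lipschitz on all of $\sfX$. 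This $\tilde\ff$ no longer represents $F$ exactly. It only gives the estimate
\begin{equation*}
W_p(\tilde\ff(\cdot,\mu)_\sharp\mu,F(\mu))\le W_p(\ff(\cdot,\mu^\delta)_\sharp\mu,\ff(\cdot,\mu^\delta)_\sharp\mu^\delta)+W_p(F(\mu^\delta),F(\mu))\le 2L\,W_p(\mu,\mu^\delta).
\end{equation*}
Here the first term uses the $L$-Lipschitz property of $\ff(\cdot,\mu^\delta)$ on $\sfX$. This bound is small uniformly in $\mu$ provided $\sup_\mu W_p(\mu,\mu^\delta)$ is small. Making the mixing perturbation uniformly small over the non-compact set $\cP_p(\sfX)$ is the delicate point. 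I would handle it using $W_p^p((1-\eta)\mu+\eta\sigma,\mu)\le\eta\int \sfd^p(x,\bar x)\,\d(\mu\otimes\sigma)$ and letting $\eta=\eta(\mu)$ decay with the $p$-moment of $\mu$ while staying positive, which keeps $\mu\mapsto\mu^\delta$ continuous. Combining this with the approximation step gives the claim with $\varepsilon$ split between the two errors.
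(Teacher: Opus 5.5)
Your final argument is essentially the paper's proof: mix $\mu$ with a full-support reference measure using a continuous, $\mu$-dependent weight chosen so that $\sup_\mu W_p(\mu,\mu^\delta)$ is small. Then $\ff(\cdot,\mu^\delta)$ is $L$-Lipschitz on all of $\sfX$ and jointly continuous, so you can approximate it by some $\gg\in\cA$ and conclude with the three-term triangle inequality; just make sure the reference measure has finite $p$-moment.

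As an aside, the set $\{(x,\mu): x\in\supp(\mu)\}$ is not closed: it is a dense $G_\delta$. Moreover, Example~\ref{ex:lift_discontinuous_supp} shows that no jointly continuous exact representative exists in general, so abandoning the extension route was necessary, not merely convenient.
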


\begin{remark}[Connection with transformer universality]
\label{rmk:transformers}
The assumption on $\cA$ in Theorem~\ref{thm:approximation} is tailored to applications where $\cA$ is a parametric family of architectures whose uniform closure is rich enough to contain every jointly continuous $L$-Lipschitz mapping. A natural instance is the case of transformer architectures: the universality result of \cite{furuya2024transformers} shows that, on a compact domain $\sfX\subset \R^d$, the set of deep transformer mappings is uniformly dense in the space of jointly continuous functions $\sfX \times \cP(\sfX) \to \sfY$. In particular it contains every $L$-Lipschitz (in the first variable) jointly continuous mapping in its uniform closure, so Theorem~\ref{thm:approximation} applies: every $L$-Lipschitz non-splitting transformation $F$ can be approximated, in Wasserstein distance and uniformly in $\mu$, by the action of a deep transformer.
\end{remark}

In the rest of this article we develop a framework to prove these results, as well as providing several additional results which could be interesting in themselves, including a link with Lagrangian representations of continuous transformations of probability measures. An interesting point that we want to emphasize already is how we discuss in general the measurability and continuity of the transport representative $\ff$. 
Indeed the condition $\ff(\cdot,\mu)_\sharp \mu = F(\mu)$ only involves the values of $\ff(\cdot,\mu)$ $\mu$-a.e. Thus the most we can hope is to define without ambiguity $\ff(\cdot,\mu)$ on $\supp(\mu) \subset \sfX$, but the latter set changes with $\mu$. For instance in~\eqref{eq:tran_rep_discrete} we can define $\ff(x_i,\mu) = y_i$, but there is no canonical way to define $\ff(\cdot,\mu)$ outside of $\{ x_1, \ldots, x_m \}$. A more principled way is to see $\ff(\cdot,\mu)$ as an element of $L^p(\mu;\sfX, \sfY)$, that is, an equivalence class of maps $\sfX \to \sfY$, quotiented by the equality $\mu$-a.e. Thus we think at $\mu \mapsto \ff(\cdot,\mu)$ as a map from $\cP_p(\sfX)$ to $L^p(\mu;\sfX; \sfY)$. We still have the problem that the image space $L^p(\mu;\sfX; \sfY)$ depends on $\mu$. 
To be able to talk about continuity of $\mu \mapsto \ff(\cdot,\mu)$, we rely on the \emph{Transport-Lebesgue distance} introduced first in \cite{GarciaTrillos-Slepcev16} for the analysis of variational problems on graphs and their continuous limit, further studied in \cite{thorpe2017transportation,trillos2018variational}. It is a distance on the space of pairs $(\mu,f)$ with $\mu \in \cP_p(\sfX)$ and $f \in L^p(\mu;\sfX; \sfY)$, denoted by $\rmT\rmL_p(\sfX;\sfY)$. The idea is to embed a pair $(\mu,f)$ as a probability distribution in $\sfX \times \sfY$ concentrated on its graph: we consider $\iota(\mu,f) = (\ii \times f)_\sharp \mu$ which maps $(\mu,f) \in \rmT\rmL_p(\sfX;\sfY)$ into  a probability distribution over $\sfX \times \sfY$ concentrated on the graph of $f$. The $\rmT\rmL_p$ distance between $(\mu,f)$ and $(\mu',f')$ is nothing else than the classical Wasserstein distance between probability distributions $\iota(\mu,f)$ and $\iota(\mu',f')$ on $\cP_p(\sfX \times \sfY)$. A transport representative $\ff : \sfX \times \cP_p(\sfX) \to \cP_p(\sfY)$ will be said Borel or continuous if the map $\mu \mapsto (\mu,\ff(\cdot,\mu))$ from $\cP_p(\sfX)$ to $\rmT\rmL_p(\sfX;\sfY)$ is Borel, or continuous. We connect back these notions of Borel measurability and continuity with Borel measurability or continuity of $\ff$ as a function of $x$ and $\mu$ to obtain the statements of Theorem~\ref{thm:main_continuity} and Theorem~\ref{thm:main_Lipschitz}, see Remark~\ref{rmk:mes_cont} below.

\subsection{Organization of the article}

The proof of our main results, Theorems~\ref{thm:main_continuity},~\ref{thm:main_Lipschitz}, and~\ref{thm:approximation} is done in Section~\ref{sec:main_results}. But before proceeding to it we will need to derive several results which may be independent on their own right.

We first recall some notations and basic results of measure theory in Section~\ref{sec:prelim}.

In Section~\ref{sec:TLp}, we study the space $\rmT\rmL_p(\sfX;\sfY)$ made of couples $(\mu,f)$ with $f \in L^p(\mu;\sfX;\sfY)$, introduced in \cite{GarciaTrillos-Slepcev16}. Though several important results were already discovered in \cite{GarciaTrillos-Slepcev16}, we prove additional properties of this space and in particular identify a criterion of relative compactness which plays in crucial role in the proof of our main result and may be of independent interest. 

As the reader can see, discrete measures play a distinguished role in our study. Rather than focusing on measures with equal weights, we actually need measures whose weights are as different as possible, as they give the possibility to identify unequivocally the transport representative. We call these discrete measures \emph{generic}, and we introduce and study them in Section~\ref{sec:generic_measures}.

In Section~\ref{sec:main_results} we actually prove our main results. We investigate what we can say on the transport representative $\ff$ depending if $F$ is Borel, continuous or Lipschitz. We provide several examples to discuss the sharpness of our assumptions.   

Eventually in Section~\ref{sec:lagrangian} we make the link with representations of $F$ as a transformation of random variables, the so-called Lagrangian representations following \cite{cavagnari2023lagrangian,CSS25}. We prove that having a Lagrangian representation invariant by measure-preserving isomorphisms is equivalent to the existence of a continuous transport representative.

\subsection*{Acknowledgments}

We warmly thank Gabriel Peyré for raising the question which was the starting point of our investigation and for fruitful discussions. 

\section{Preliminaries}
\label{sec:prelim}

\subsection{Notations and basic definitions of measure theory}

We refer to \cite{AGS} for definitions and results of measure theory and optimal transport, we highlight here the results we will need. All our topological spaces are metric spaces, endowed with their Borel $\sigma$-algebra.

We denote by $\sfX,\sfY$ complete separable metric spaces endowed with respective metrics $\sfd_\sfX$ and 
$\sfd_\sfY$. They are endowed with their Borel $\sigma$-algebra. We work with an exponent $p \in [1, + \infty)$ fixed in the work, and the product space $\sfX \times \sfY$ is equipped with the $\ell_p$ distance 
\begin{equation*}
\sfd_{\sfX \times \sfY}((x,y),(x',y')) = \left( \sfd_\sfX(x,x')^p + \sfd_\sfY(y,y')^p \right)^{1/p}.
\end{equation*}

We denote by $\cM(\sfX)$  the space of 
finite and positive Borel measures on $\sfX$;
$\cP(\sfX)$ is the subspace of probability measures in $\cM(\sfX)$. The support of a measure $\mu$, written $\supp(\mu)$, is the smallest closed set containing all the mass of $\mu$. It is characterized as: a point $x \in \sfX$ belongs to $\supp(\mu)$ if and only if $\mu(B(x,r)) > 0$ for all $r > 0$, being $B(x,r)$ the open ball of center $x$ and radius $r$. If $A$ is a Borel set and $\mu \in \cM(\sfX)$, $\mu \mres A$ denotes the restriction of the measure $\mu$ to $A$, defined by $(\mu \mres A)(B) = \mu(A \cap B)$.

The space $\cP(\sfX)$ is endowed with the topology of narrow convergence (a.k.a. weak convergence) generated by the duality pairing with continuous and bounded functions. A subset $\cS \subseteq \cP(\sfX)$ is relatively compact if and only if is tight, meaning that for every $\varepsilon  >0$ we can find a compact $\cK$ of $\sfX$ such that $\mu(\cK) \geq 1 - \varepsilon$ for all $\mu \in \cS$.  

Given Polish spaces $\sfX$, $\sfY$, $\mu \in \cP(\sfX)$ and $\nu \in \cP(\sfY)$ we denote by $\Gamma(\mu,\nu)$ the subset of $\cP(\sfX \times \sfY)$ made of probability measures on the product space whose marginals are $\mu$ and $\nu$. Given $\gamma \in \cP(\sfX \times \sfY)$ a probability distribution on a product space, we can consider its disintegration with respect to its first marginal: with $\mu \in \cP(\sfX)$ the first marginal of $\gamma$, it is a family $(\gamma_x)_{x \in \sfX}$ of probability distributions over $\sfY$, unique $\mu$-a.e., and measurable (in the sense $(x,A) \mapsto \gamma_x(A)$ defines a Markov kernel) such that: 
\begin{equation*}
\gamma = \int_{\sfX} (\delta_x \otimes \gamma_x) \, \d \mu(x)
\end{equation*} 

We denote by $\cP_p(\sfX)$ the space of probability measures $\mu$ such that $\int \sfd_\sfX(x_0,x)^p \d \mu(x) < + \infty$ for some $x_0 \in \sfX$. It is endowed with the $p$-Wasserstein distance: if $\mu,\mu' \in \cP_p(\sfX)$, 
\begin{equation*}
W_p(\mu,\mu') = \left( \min_{\eta \in \Gamma(\mu,\nu)} \int_{\sfX \times \sfX} \sfd_\sfX(x,x')^p \, \d \eta(x,x') \right)^{1/p}.
\end{equation*}
The infimum is always reached, and any optimal $\eta$ is called an \emph{optimal transport plan} between $\mu$ and $\nu$.
On $\cP_p(\sfX)$, the quantity $W_p$ defines a distance metrizing narrow convergence together with convergence of $p$-moments. Specifically, a function $\phi : \sfX \to \R$ has $p$-growth if $|\phi(x)| \leq C(\sfd_\sfX(x_0,x)^p+1)$ for some $C \in \R$ and $x_0 \in \sfX$. Then it can be proved that for any sequence $(\mu_n)_{n \in \N}$ and any $\mu$,
\begin{equation*}
W_p(\mu_n,\mu) \to 0 \quad \Leftrightarrow \quad \int_\sfX \phi \, \d \mu_n \to \int_\sfX \phi \, \d \mu \; \text{ for every } \phi : \sfX \to \R \text{ continuous with } p \text{-growth.}
\end{equation*}
We always endowed $\cP_p(\sfX)$ with the topology induced by $W_p$. Its $\sigma$-algebra is generated by the maps $\mu \mapsto \mu(A)$ for $A$ Borel. In the case $\sfd_\sfX$ is bounded we recover the narrow convergence of probability measures. We will use Wasserstein distances over the spaces $\cP_p(\sfX)$, $\cP_p(\sfY)$, and $\cP_p(\sfX \times \sfY)$, and always use $W_p$ to refer to the Wasserstein distances over these different spaces, in order to lighten notations.

If $\mu \in \cP(\sfX)$ then we denote by $L^p(\mu;\sfX;\sfY)$ the space of Borel maps $f : \sfX \to \sfY$ which satisfy $\int \sfd_\sfY(y_0,f(x))^p \d \mu(x) < + \infty$ for some $y_0 \in \sfY$, quotiented by the equivalence relation of being equal $\mu$-a.e. It is endowed with the distance 
\begin{equation*}
\sfd_{L^p(\mu;\sfX;\sfY)}(f,f') = \left( \int_{\sfX} \sfd_{\sfY}(f(x),f'(x))^p \, \d \mu(x) \right)^{1/p},
\end{equation*}
which turns it into a complete metric space, and is abbreviated $\sfd_{L^p(\mu)}$ or $\sfd_{L^p(\sfX;\sfY)}$ when the context is clear. 
We write $\ii :\sfX \to \sfX$ for the identity map.

\subsection{Preliminary technical results}

We start by recalling a variant of the Skorokhod representation theorem of convergence in $W_p$. We recall that as $[0,1]$ endowed with its Borel $\sigma$-algebra and the Lebesgue measure is a standard Borel space.

\begin{theorem}
\label{thm:skorokhod_Wp}
Let $(\Omega,\frB,\P)$ a standard Borel space. Then $(\mu_n)_{n \in \N}$ converges to $\mu$ in $\cP_p(\sfZ)$ if and only if we can find random variables $Z_n, Z$ in $L^p(\Omega;\sfZ)$ with ${Z_n}_\sharp \P = \mu_n$, $Z_\sharp \P = \mu$ and such that $(Z_n)_{n \in \N}$ converges to $Z$ in $L^p(\Omega;\sfZ)$
\end{theorem}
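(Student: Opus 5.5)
Both directions go through the Skorokhod representation on $(\Omega,\frB,\P)$, with $\sfZ$ a Polish space and $\P$ the given probability on the standard Borel space.

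\emph{Easy direction.} Suppose $Z_n\to Z$ in $L^p(\Omega;\sfZ)$ with ${Z_n}_\sharp\P=\mu_n$ and $Z_\sharp\P=\mu$. The coupling $(Z_n,Z)_\sharp\P\in\Gamma(\mu_n,\mu)$ is admissible, so
\[
W_p(\mu_n,\mu)^p\le \int_\Omega \sfd_\sfZ(Z_n,Z)^p\,\d\P = \sfd_{L^p}(Z_n,Z)^p\to 0 .
\]

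\emph{Converse direction.} I first reduce to the model space. I would assume $\P$ atomless; with atoms the statement fails, e.g. if $\Omega$ is a single point then every law $Z_\sharp\P$ is a Dirac mass. By the isomorphism theorem for standard Borel probability spaces, an atomless $(\Omega,\frB,\P)$ is isomorphic mod null sets to $([0,1],\mathcal L^1)$. It therefore suffices to build the variables on $[0,1]$ with Lebesgue measure and transport them back through this isomorphism, which preserves laws and $L^p$ distances.

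Construction on $[0,1]$. Since $\mu$ is a Borel measure on the Polish space $\sfZ$, the Borel isomorphism / parametrization theorem gives a Borel $Z:[0,1]\to\sfZ$ with $Z_\sharp\mathcal L^1=\mu$. For each $n$, pick an optimal plan $\eta_n\in\Gamma(\mu,\mu_n)$ and disintegrate it with respect to its first marginal as $\eta_n=\int(\delta_z\otimes\eta_{n,z})\,\d\mu(z)$. The goal is a Borel $Z_n$ with $(Z,Z_n)_\sharp\mathcal L^1=\eta_n$. Then ${Z_n}_\sharp\mathcal L^1=\mu_n$ and
\[
\int \sfd_\sfZ(Z,Z_n)^p\,\d\mathcal L^1=\int \sfd_\sfZ^p\,\d\eta_n=W_p(\mu,\mu_n)^p\to 0,
\]
which is the desired $L^p$ convergence.

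The main obstacle is this coupling step: realizing a prescribed plan with a fixed first component $Z$. The issue is that $Z$ may have no room left to carry extra randomness. I would fix this by building $Z$ from only half of the randomness. Write $[0,1]\ni\omega\mapsto(\omega_1,\omega_2)\in[0,1]^2$ through a measure-preserving Borel isomorphism, and let $Z(\omega)=\tilde Z(\omega_1)$ with $\tilde Z_\sharp\mathcal L^1=\mu$. Then set
\[
Z_n(\omega)=G_n(\tilde Z(\omega_1),\omega_2),
\]
where $G_n:\sfZ\times[0,1]\to\sfZ$ is a Borel map with $G_n(z,\cdot)_\sharp\mathcal L^1=\eta_{n,z}$. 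Such a $G_n$ is the standard measurable "randomization" of the Markov kernel $z\mapsto\eta_{n,z}$. One way to get it is to embed $\sfZ$ Borel-isomorphically into $[0,1]$ and apply the quantile function of the image kernel, which is jointly Borel in $(z,u)$. With this choice, $(Z,Z_n)_\sharp\mathcal L^1=\int\delta_z\otimes\eta_{n,z}\,\d\mu=\eta_n$, and the proof closes.
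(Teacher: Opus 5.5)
Your proof is correct, but it takes a different route from the paper's. For the nontrivial implication, the paper invokes the classical Skorokhod representation theorem: narrow convergence yields $Z_n\to Z$ almost surely. It then upgrades to $L^p$ convergence using the uniform $p$-integrability of $(\mu_n)_{n\in\N}$ that comes with $W_p$ convergence, together with Vitali's theorem.

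You bypass Skorokhod entirely. You build $Z$ from only one ``half'' of the randomness and realize an optimal plan $\eta_n\in\Gamma(\mu,\mu_n)$ as $(Z,Z_n)_\sharp\P$ through a measurable randomization of its disintegration kernel. This gives an exact isometric realization, $\sfd_{L^p(\Omega;\sfZ)}(Z_n,Z)=W_p(\mu_n,\mu)$ for every $n$ with a single fixed $Z$. No uniform integrability or Vitali argument is needed, and any rate of convergence of $W_p(\mu_n,\mu)$ transfers directly to the random variables. The price is a different set of measure-theoretic inputs:
\begin{itemize}
\item the isomorphism theorem for atomless standard probability spaces (used both to reduce to $[0,1]$ and to split $[0,1]$ into $[0,1]^2$);
\item existence of optimal plans and of disintegrations;
\item the randomization lemma producing a jointly Borel $G_n$ with $G_n(z,\cdot)_\sharp\mathcal{L}^1=\eta_{n,z}$.
\end{itemize}
The paper's route additionally gives almost sure convergence, which yours yields only along a subsequence.

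Your remark that $\P$ must be atomless is also right. With an atom-only $\Omega$, laws of random variables are restricted and the ``only if'' direction fails. The paper's statement tacitly assumes this, since the Skorokhod theorem it relies on is formulated on $[0,1]$ with Lebesgue measure.
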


\begin{proof}
The converse implication is immediate as $W_p(\mu_n,\mu) \leq \sfd_{L^p(\Omega;\sfZ)}(Z_n,Z)$: we can take $\eta_n = (Z_n,Z)_\sharp \P \in \Gamma(\mu_n,\mu)$. For the direct implication, if $(\mu_n)_{n \in \N}$ converges to $\mu$ in $\cP_p(\sfZ)$ then it converges narrowly and the classical Skorokhod theorem yields the existence of random variables $Z_n, Z : \Omega \to \sfZ$ such that ${Z_n}_\sharp \P = \mu_n$, $Z_\sharp \P = \mu$ and $Z_n \to Z$ almost surely \cite[Theorem 8.5.4]{bogachev2007measure}. As the $(\mu_n)_{n \in \N}$ are uniformly $p$-integrable (thanks to the $W_p$ convergence), the convergence of $Z_n$ to $Z$ also holds in $L^p(\Omega;\sfZ)$ by the Vitali convergence theorem. 
\end{proof}

We also collect elementary results on the stability of $(\mu,f) \mapsto f_\sharp \mu$ for $\mu \in \cP_p(\sfX)$ and $f \in L^p(\mu;\sfX;\sfY)$, in this case $f_\sharp \mu \in \cP_p(\sfY)$. We first recall the following qualitative result, see \cite[Lemma 5.2.1]{AGS} for a proof. 

\begin{lemma}
\label{lm:joint_continuity_fsharpmu}
Assume $\mu_n \to \mu$ narrowly and $f_n : \sfX \to \sfY$ converges uniformly on compact subsets of $\sfX$ to a continuous $f$. Then ${f_n}_\sharp \mu_n \to f_\sharp \mu$ narrowly. 
\end{lemma}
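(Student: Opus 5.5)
We test narrow convergence against bounded continuous functions and reduce to a statement about the measures $\mu_n$ alone. Fix $\phi \in C_b(\sfY)$; we must show $\int_\sfX \phi\circ f_n \,\d\mu_n \to \int_\sfX \phi\circ f\,\d\mu$. We split
\begin{equation*}
\int \phi\circ f_n\,\d\mu_n - \int \phi\circ f\,\d\mu
= \int (\phi\circ f_n - \phi\circ f)\,\d\mu_n + \Bigl(\int \phi\circ f\,\d\mu_n - \int\phi\circ f\,\d\mu\Bigr).
\end{equation*}
The second term tends to zero by narrow convergence of $\mu_n$, since $\phi\circ f$ is bounded and continuous ($f$ is continuous). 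So the whole work is in the first term.

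For the first term we use tightness. Since $(\mu_n)$ converges narrowly in a Polish space, the family $\{\mu_n\}\cup\{\mu\}$ is relatively compact, hence tight, as recalled in the preliminaries. Fix $\varepsilon>0$ and a compact $\cK$ with $\mu_n(\sfX\setminus\cK)\le\varepsilon$ for all $n$. Off $\cK$ we bound the integrand crudely by $2\|\phi\|_\infty$, giving a contribution at most $2\|\phi\|_\infty\varepsilon$.

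On $\cK$ we need $\sup_{x\in\cK}|\phi(f_n(x))-\phi(f(x))|\to0$. This is the only delicate point, since $\phi$ is merely continuous, not uniformly continuous. We proceed as follows.
\begin{enumerate}
\item The set $f(\cK)$ is compact because $f$ is continuous.
\item By uniform convergence on $\cK$, for $n$ large, $f_n(\cK)$ lies in the closed $1$-neighbourhood of $f(\cK)$.
\item The continuous function $\phi$ is uniformly continuous "near" the compact $f(\cK)$. Precisely, for every $\eta>0$ there is $\delta>0$ such that $|\phi(y)-\phi(y')|<\eta$ whenever $y\in f(\cK)$ and $\sfd_\sfY(y,y')<\delta$. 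This follows by a standard compactness/contradiction argument, extracting a convergent subsequence of the points $y_k\in f(\cK)$.
\item Since $\sup_\cK \sfd_\sfY(f_n,f)<\delta$ for $n$ large, we get $\sup_{\cK}|\phi\circ f_n-\phi\circ f|<\eta$.
\end{enumerate}

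Combining the pieces, $\limsup_n$ of the difference is at most $\eta + 2\|\phi\|_\infty\varepsilon$, and letting $\eta,\varepsilon\to0$ concludes. The main (mild) obstacle is step 3: the lack of uniform continuity of $\phi$ on all of $\sfY$, which is handled by the one-sided uniform continuity at the compact set $f(\cK)$.
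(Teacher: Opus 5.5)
Your proof is correct. The paper gives no proof of its own, only the citation \cite[Lemma 5.2.1]{AGS}, and your argument (uniform tightness of $(\mu_n)$, plus uniform convergence of $\phi\circ f_n$ to $\phi\circ f$ on a compact $\cK$, with compactness of $f(\cK)$ supplying the uniform continuity of $\phi$ that you need) is the standard proof behind that result.

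As a minor streamlining, your step 2 is never used. Step 3 can be bypassed by testing narrow convergence only against bounded Lipschitz $\phi$, for which $|\phi\circ f_n-\phi\circ f|\le \mathrm{Lip}(\phi)\,\sfd_\sfY(f_n,f)$.
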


We will also need more quantitative estimates. 
First, if we fix $\mu$ and vary $f$,  
\begin{equation}
\label{eq:Wp_Lp}
W_p(f_\sharp \mu, f'_\sharp \mu) \leq  \sfd_{L^p(\mu)}(f,f'),
\end{equation}
which can be seen as $(f \times f')_\sharp \mu \in \Gamma(f_\sharp \mu, f'_\sharp \mu)$ , see also \cite[Eq. (7.1.6)]{AGS}. On the other hand, if we keep $f$ fixed and vary $\mu$:
\begin{equation}
\label{eq:lift_lipschitz}
W_p(f_\sharp \mu, f_\sharp \mu') \leq L W_p(\mu,\mu') \qquad \text{if } f : \sfX \to \sfY \text{ is } L \text{-Lipschitz.}
\end{equation}
This is obtained as $(f,f)_\sharp \eta \in \Gamma(f_\sharp \mu, f_\sharp \mu')$ as soon as $\eta \in \Gamma(\mu, \mu')$.

A correspondence $G : \sfW \twoheadrightarrow \sfZ$ is a map from $\sfW$ to the subsets of $\sfZ$. If $\sfW$ and $\sfZ$ are metric spaces, then the correspondence $G$ is: upper hemicontinuous if $w_n \to w$ and $z_n \to z$ as $n \to + \infty$ with $z_n \in G(w_n)$ implies  $z \in G(w)$ ; and lower hemicontinuous if for every $z \in G(w)$ and every sequence $(w_n)_{n \in \N}$ converging to $w$, there exists $(z_n)_{n \in \N}$ converging to $z$ with $z_n \in G(w_n)$ for all $n$. With this vocabulary, we state a result on the hemicontinuity of $\Gamma$, already investigated in \cite{bergin1999continuity,ghossoub2021continuity,cavagnari2023lagrangian}.

\begin{lemma}
\label{lm:Gamma_hemic}
The correspondence $(\mu,\nu) \to \Gamma(\mu,\nu)$, from $\cP_p(\sfX) \times \cP_p(\sfY)$ into the subsets of $\cP_p(\sfX \times \sfY)$, is non-empty, convex and compact valued, and hemicontinuous.
\end{lemma}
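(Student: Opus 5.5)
The statement bundles four claims about $\Gamma(\mu,\nu)$: non-emptiness, convexity, compactness (all in $\cP_p(\sfX\times\sfY)$), and upper plus lower hemicontinuity. I will prove them in that order.

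\textbf{Pointwise properties.} Non-emptiness is immediate, since $\mu\otimes\nu\in\Gamma(\mu,\nu)$ and it has finite $p$-moment because
\begin{equation*}
\sfd((x_0,y_0),(x,y))^p=\sfd_\sfX(x_0,x)^p+\sfd_\sfY(y_0,y)^p .
\end{equation*}
The same identity shows that every $\gamma\in\Gamma(\mu,\nu)$ lies in $\cP_p(\sfX\times\sfY)$, with $p$-moment equal to the sum of the moments of $\mu$ and $\nu$.

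Convexity holds because the marginal constraints are linear in $\gamma$.

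For compactness, two ingredients are needed.
\begin{itemize}
\item \emph{Tightness.} Given compacts $K_1,K_2$ with $\mu(K_1^c),\nu(K_2^c)\le\varepsilon$, we have $\gamma((K_1\times K_2)^c)\le 2\varepsilon$ for every $\gamma\in\Gamma(\mu,\nu)$.
\item \emph{Uniform integrability of $p$-moments.} The cost splits as $\sfd_\sfX^p+\sfd_\sfY^p$, and each term is controlled by the fixed marginal. In particular,
\begin{equation*}
\int_{\{\sfd_\sfX(x_0,x)^p+\sfd_\sfY(y_0,y)^p>R\}}\bigl(\sfd_\sfX(x_0,x)^p+\sfd_\sfY(y_0,y)^p\bigr)\,\d\gamma
\end{equation*}
is bounded by $2\int_{\{\sfd_\sfX^p>R/2\}}\sfd_\sfX^p\,\d\mu$ plus the analogous $\nu$-term.
\end{itemize}
Together these give relative compactness in $W_p$. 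Closedness follows because marginals are narrowly continuous.

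\textbf{Upper hemicontinuity.} Let $\mu_n\to\mu$, $\nu_n\to\nu$ in $W_p$, and $\gamma_n\to\gamma$ in $W_p$ with $\gamma_n\in\Gamma(\mu_n,\nu_n)$. The marginal projections are $1$-Lipschitz, so by \eqref{eq:lift_lipschitz} the marginals of $\gamma_n$ converge to those of $\gamma$. Hence $\gamma\in\Gamma(\mu,\nu)$. The same tightness and moment argument, now applied to the convergent (hence relatively compact) sequences $(\mu_n)$ and $(\nu_n)$, also yields local compactness of the graph. This gives upper hemicontinuity in the stronger, compactness-based sense.

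\textbf{Lower hemicontinuity.} This is the main step. Fix $\gamma\in\Gamma(\mu,\nu)$ and sequences $\mu_n\to\mu$, $\nu_n\to\nu$. I will build $\gamma_n\in\Gamma(\mu_n,\nu_n)$ converging to $\gamma$ by gluing.
\begin{itemize}
\item Take optimal plans $\alpha_n\in\Gamma(\mu_n,\mu)$ and $\beta_n\in\Gamma(\nu,\nu_n)$.
\item Use the gluing lemma: disintegrate $\alpha_n$ with respect to its second marginal $\mu$, disintegrate $\beta_n$ with respect to its first marginal $\nu$, and form a measure $\sigma_n$ on $\sfX\times\sfX\times\sfY\times\sfY$ whose projections are $\alpha_n$ on the coordinates $(x',x)$, $\gamma$ on $(x,y)$, and $\beta_n$ on $(y,y')$.
\item Set $\gamma_n$ to be the projection of $\sigma_n$ onto $(x',y')$. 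Its marginals are $\mu_n$ and $\nu_n$.
\item The projection of $\sigma_n$ onto $\bigl((x',y'),(x,y)\bigr)$ is a coupling of $\gamma_n$ and $\gamma$. Its cost gives
\begin{equation*}
W_p(\gamma_n,\gamma)^p\le W_p(\mu_n,\mu)^p+W_p(\nu_n,\nu)^p\to0 .
\end{equation*}
\end{itemize}
The only delicate point is carrying out the gluing measurably on Polish spaces. This is standard, using disintegration as recalled in Section~\ref{sec:prelim}; see \cite[Lemma 5.3.2]{AGS}.
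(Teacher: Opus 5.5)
Your proof is correct, but it takes a different route from the paper's. The paper does not argue directly in $W_p$. It quotes the narrow-topology version of the statement from \cite{bergin1999continuity} (and Ghossoub--Saunders). It then upgrades that result by noting that $W_p$-convergent sequences $(\mu_n)$, $(\nu_n)$ are uniformly $p$-integrable, hence so is any sequence $\gamma_n\in\Gamma(\mu_n,\nu_n)$, so narrow limits of such $\gamma_n$ are automatically $W_p$ limits.

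You instead prove everything intrinsically in $\cP_p$:
\begin{itemize}
\item Your splitting of the moment into $\sfd_\sfX^p+\sfd_\sfY^p$ is exactly the paper's uniform-integrability remark. It gives compactness and also the sequential-compactness form of upper hemicontinuity, which is what the paper actually uses later in Theorems~\ref{thm:tl_polish} and~\ref{thm:rel_cpct_TL}.
\item The closed graph follows from the $1$-Lipschitz projections and \eqref{eq:lift_lipschitz}.
\item Lower hemicontinuity, the only nontrivial part and the one the paper delegates to the literature, follows from your chain gluing of $\alpha_n$, $\gamma$, $\beta_n$.
\end{itemize}

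What your route buys is self-containedness and a quantitative estimate, $W_p(\gamma_n,\gamma)^p\le W_p(\mu_n,\mu)^p+W_p(\nu_n,\nu)^p$. Conversely, any coupling of some $\gamma\in\Gamma(\mu,\nu)$ with some $\gamma'\in\Gamma(\mu',\nu')$ costs at least $W_p(\mu,\mu')^p+W_p(\nu,\nu')^p$. Together these show that $\Gamma$ is an isometry from $\cP_p(\sfX)\times\cP_p(\sfY)$, with the $\ell_p$ product distance, into the compact subsets of $\cP_p(\sfX\times\sfY)$ with the Hausdorff distance. Running the same gluing with the truncated distances $\min(\sfd_\sfX,1)$ and $\min(\sfd_\sfY,1)$ also recovers the narrow-topology result the paper cites.
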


\begin{proof}
This result has already been proved if the $W_p$ convergence is substituted by the narrow convergence of measures: see \cite[Theorem 1]{bergin1999continuity}, as well as the more recent work \cite[Theorem 1]{ghossoub2021continuity} for a shorter proof.

The result then follows easily from the following remark. If $(\mu_n)_{n \in \N}$ and $(\nu_n)_{n \in \N}$ converge in $W_p$, then they are uniformly $p$-integrable \cite[Proposition 7.1.5]{AGS}, thus any sequence $(\gamma_n)_{n \in \N}$ with $\gamma_n \in \Gamma(\mu_n,\nu_n)$ for all $n$ is also uniformly $p$-integrable \cite[Remark 5.2.3]{AGS}. It implies that, if $(\gamma_n)_{n \in \N}$ converges narrowly to a limit, then it also converges in $W_p$ to the same limit. With this remark \cite[Theorem 1]{bergin1999continuity} implies our result. 
\end{proof}

\section{The space $\rmT \rmL_p(\sfX;\sfY)$ }
\label{sec:TLp}

In our analysis, we need to consider maps defined on different domains, and to compare them. We write $\rmT\rmL_p(\sfX;\sfY)$ for the set of such pairs of functions together with their base measure:
\begin{equation*}
\rmT\rmL_p(\sfX;\sfY) = \{ (\mu,f) \ : \ \mu \in \cP_p(\sfX), \; f \in L^p(\mu;\sfX;\sfY) \}.
\end{equation*}
Recalling $\ii : \sfX \to \sfX$ is the identity map, we write $\iota$ for the embedding of $\rmT\rmL_p(\sfX;\sfY)$ in the set $\cP_p(\sfX \times \sfY)$ of measures on the product space:
\begin{equation*}
\iota(\mu,f) = (\ii \times f)_\sharp \mu.
\end{equation*}
In words, we embed the pair $(\mu,f)$ into a measure on the product space whose first marginal is $\mu$, and concentrated on the graph of $f$.
The embedding $\iota$, valued in $\cP_p(\sfX \times \sfY)$, is injective. Indeed if $\gamma = \iota(\mu,f)$ then necessarily $\mu$ is the first marginal of $\gamma$, while the disintegration of $\gamma$ with respect to its first marginal reads $(\delta_{f(x)})_{x \in \sfX}$ for $\mu$-a.e. $x$. We write $\cPg_p(\sfX \times \sfY)$ for the image of $\iota$, that is, the set of couplings concentrated on graphs of Borel functions: 
\begin{equation*}
\cPg_p(\sfX \times \sfY) = \{ \gamma \in \cP_p(\sfX \times \sfY) \ : \ \exists \mu \in \cP_p(\sfX), f \in L^p(\mu;\sfX;\sfY), \text{ such that } \gamma = (\ii \times f)_\sharp \mu  \}.
\end{equation*}

\begin{definition}
On the set $\rmT\rmL_p(\sfX;\sfY)$ we define $\sfd_{\rmT\rmL_p}$ as the pull-back of the metric $W_p$ on $\cP_p(\sfX \times \sfY)$ by $\iota$: 
\begin{equation*}
\sfd_{\rmT\rmL_p}((\mu,f),(\mu',f')) = W_p( \iota(\mu,f), \iota(\mu',f')  )= W_p( (\ii \times f)_\sharp \mu,  (\ii \times f')_\sharp \mu'  ).    
\end{equation*}
\end{definition}

In this section we analyze the metric structure of $\rmT\rmL_p(\sfX;\sfY)$ which was introduced and studied in \cite{GarciaTrillos-Slepcev16,thorpe2017transportation,trillos2018variational}. Compared to these works, we are in a slightly more general context, as they only consider $\sfX$ to be a subset of a Euclidean space and $\sfY = \R$. Though $\rmT\rmL_p(\sfX;\sfY)$ is not a complete space as already noted in these works, we prove it is still completely metrizable. Moreover, we characterize relatively compact sets via a notion of equidispersivity that we introduce. We also make the link between $\rmT\rmL_p(\sfX;\sfY)$-convergence and the convergence of continuous functions with compact domains via an analogue of Lusin's theorem.

In the sequel, a typical element of the product space $\sfX \times \sfY$ is denoted by $(x,y)$, while a typical element of $\sfX \times \sfY \times \sfX \times \sfY$ is denoted by $(x,y,x',y')$. In particular if $\sigma \in \cP(\sfX \times \sfY \times \sfX \times \sfY )$ while $g$ is a function defined on $\sfX \times \sfY \times \sfX \times \sfY$, we abbreviate
\begin{equation*}
\| g(x,y,x',y') \|_{L^p(\sigma)} = \left( \int_{\sfX \times \sfY \times \sfX \times \sfY} |g(x,y,x',y')|^p \, \d \sigma (x,y,x',y') \right)^{1/p}.
\end{equation*}
For a product space $\sfX \times \sfY$ or $\sfX \times \sfY \times \sfX \times \sfY$ we denote by $\pi^i$ the project on the $i$-th factor, and $\pi^{i,j}$ the projection on the $i$-th and $j$-th factor.

\subsection{Preliminary results on couplings which are concentrated on graphs}

We first need some preliminary results on the structure of the space $\cPg_p(\sfX\times \sfY)$. Recall that a subset $G$ of $\sfX \times \sfY$ is the graph of a function if and only if
\begin{equation}
\label{eq:char_graph}
(x,y), (x,y') \in G \quad \Rightarrow \quad y = y'.
\end{equation}
In the case of couplings $\gamma \in \cP_p(\sfX \times \sfY)$, we obtain the following characterization of the ones concentrated on graphs. 

\begin{lemma}
\label{lm:char_cpg}
A coupling $\gamma \in \cP_p(\sfX \times \sfY)$ belongs to $\cPg_p(\sfX\times \sfY)$ if and only if 
\begin{equation}
\label{eq:characterization_det}
\text{for every } \sigma \in \Gamma(\gamma,\gamma), \qquad  \| \sfd_{\sfX}(x,x') \|_{L^p(\sigma)} = 0 \quad \Rightarrow \quad \| \sfd_{\sfY}(y,y') \|_{L^p(\sigma)} = 0 .  
\end{equation}
\end{lemma}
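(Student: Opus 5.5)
For the direct implication, I take $\gamma = (\ii \times f)_\sharp \mu$ and fix $\sigma \in \Gamma(\gamma,\gamma)$ with $\| \sfd_\sfX(x,x') \|_{L^p(\sigma)} = 0$.

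Then $\sigma$ is concentrated on the set $\{x = x'\}$. Since both marginals $\pi^{1,2}_\sharp\sigma$ and $\pi^{3,4}_\sharp\sigma$ equal $\gamma$, and $\gamma$ is concentrated on the graph $\{(x,f(x))\}$ (a Borel set, since $f$ is Borel), $\sigma$ is concentrated on
\begin{equation*}
\{ y = f(x),\ y' = f(x'),\ x = x' \}.
\end{equation*}
On this set $y = y'$, so $\| \sfd_\sfY(y,y') \|_{L^p(\sigma)} = 0$.

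For the converse, I assume \eqref{eq:characterization_det} and let $\mu = \pi^1_\sharp\gamma$. I disintegrate $\gamma = \int (\delta_x \otimes \gamma_x)\, \d\mu(x)$ and build the specific coupling
\begin{equation*}
\sigma = \int_\sfX \delta_x \otimes \delta_x \otimes (\gamma_x \otimes \gamma_x) \, \d\mu(x),
\end{equation*}
suitably reordered so that it lives on $\sfX\times\sfY\times\sfX\times\sfY$ as the law of $(x,y,x,y')$ with $y,y'$ drawn independently from $\gamma_x$. Measurability of $x \mapsto \gamma_x\otimes\gamma_x$ follows from $\gamma_x$ being a Markov kernel. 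Both $(x,y)$-marginals of $\sigma$ equal $\gamma$, so $\sigma \in \Gamma(\gamma,\gamma)$, and clearly $\| \sfd_\sfX(x,x')\|_{L^p(\sigma)} = 0$.

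By \eqref{eq:characterization_det}, for $\mu$-a.e. $x$ we then get
\begin{equation*}
\int\!\!\int \sfd_\sfY(y,y')^p \,\d\gamma_x(y)\,\d\gamma_x(y') = 0,
\end{equation*}
so $\gamma_x\otimes\gamma_x$ is concentrated on the diagonal. A probability measure whose product with itself charges only the diagonal must be a Dirac mass: if $\gamma_x$ charged two disjoint balls $B_1, B_2$, then $(\gamma_x\otimes\gamma_x)(B_1\times B_2) > 0$, which is impossible. Hence $\gamma_x = \delta_{f(x)}$ for $\mu$-a.e. $x$.

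The step I expect to need the most care is showing that this $f$ is Borel. I would define
\begin{equation*}
f(x) = \text{the barycenter-free selection } \sup\{\text{unique point}\},
\end{equation*}
or more concretely obtain it via a countable base: for each set $U_k$ of a countable base of $\sfY$, the set $\{x : f(x)\in U_k\} = \{x : \gamma_x(U_k) = 1\}$ is Borel by the kernel property, up to the $\mu$-null set where $\gamma_x$ is not a Dirac mass. There I set $f$ equal to a constant $y_0$. The set $\{x : \gamma_x \text{ is a Dirac mass}\}$ is itself Borel, being $\{x : (\gamma_x\otimes\gamma_x)(\text{diagonal}) = 1\}$.

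Finally, $f \in L^p(\mu)$ because $\gamma \in \cP_p$, and $\gamma = (\ii\times f)_\sharp\mu$ follows directly from the disintegration.
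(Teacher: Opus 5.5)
Your proof is correct and follows the same route as the paper. The direct implication uses $y=f(x)=f(x')=y'$ $\sigma$-a.e.; the converse uses the coupling $\int \delta_x\otimes\gamma_x\otimes\delta_x\otimes\gamma_x\,\d\mu(x)$, which forces $\gamma_x$ to be a Dirac mass $\mu$-a.e. The only difference is that you prove Borel measurability of $f$ by hand, via a countable base and the Borel set $\{x : (\gamma_x\otimes\gamma_x)(\text{diagonal})=1\}$, where the paper cites Kallenberg's lemma on kernels. Delete the meaningless ``barycenter-free selection'' phrase, since your concrete argument already replaces it.
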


\begin{proof}
Assume first that $\gamma \in \cPg_p(\sfX \times \sfY)$, we write $\gamma = (\ii \times f)_\sharp \mu$. Then $y = f(x)$ for $(x,y)$-a.e. $\gamma$. Thus, if $\sigma \in \Gamma(\gamma,\gamma)$ and $x = x'$ for a.e. $\sigma$, that is, $\| \sfd_{\sfX}(x,x') \|_{L^p(\sigma)} = 0$, we obtain $y = f(x) = f(x') = y'$ for $\sigma$-a.e. $(x,y,x',y')$ which reads $\| \sfd_{\sfY}(y,y') \|_{L^p(\sigma)} = 0$. 

To prove the converse implication, take $\gamma \in \cP_p(\sfX \times \sfY)$ satisfying~\eqref{eq:characterization_det}. Let $\mu \in \cP_p(\sfX)$ the first marginal of $\gamma$ and $(\gamma_x)_{x \in \sfX}$ its disintegration with respect with its first marginal. Let $\sigma \in \cP_p(\sfX \times \sfY \times \sfX \times \sfY)$ defined by 
\begin{equation*}
\sigma = \int_{\sfX} (\delta_x \otimes \gamma_x \otimes \delta_x \otimes \gamma_x ) \, \d \mu (x).
\end{equation*}
We check easily that $\sigma \in \Gamma(\gamma,\gamma)$ and $x=x'$ for $\sigma$-a.e. $(x,y,x',y')$. On the other hand
\begin{equation*}
\| \sfd_{\sfY}(y,y') \|_{L^p(\sigma)}^p = \int_{\sfX} \left( \int_{\sfY \times \sfY} \sfd_\sfY(y,y')^p \d \gamma_x(y) \d \gamma_x(y') \right) \d \mu(x).
\end{equation*}
As this quantity equates $0$ from~\eqref{eq:characterization_det}, we obtain $\int_{\sfY \times \sfY} \sfd_\sfY(y,y')^p \d \gamma_x(y) \d \gamma_x(y') = 0$ for $\mu$-a.e. $x$. Thus for $\mu$-a.e. $x$ the measure $\gamma_x$ is of the form $\delta_{y}$ for some $y \in \sfY$, that we denote by $y = f(x)$. As $(\gamma_x)_{x \in \sfX}$ defines a Markov kernel, we deduce that $f : \sfX \to \sfY$ is Borel \cite[Lemma 1.14]{kallenberg2017random}. We conclude to $\gamma = (\ii \times f)_\sharp \mu$, and $f \in L^p(\mu;\sfX;\sfY)$ as $\gamma \in \cP_p(\sfX \times \sfY)$.
\end{proof}

\begin{lemma}
\label{lm:couplings_sigma_eta}
Assume $\gamma = \iota(\mu,f)$ and $\gamma' = \iota(\mu',f')$ are two elements of $\cPg(\sfX \times \sfY)$. Then $\sigma \in \Gamma(\gamma,\gamma')$ if and only if $\sigma = (\pi^1\times f \times \pi^2 \times f')_\sharp \eta$ for some $\eta \in \Gamma(\mu,\mu')$. 
\end{lemma}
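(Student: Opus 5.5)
The two implications are of very different difficulty, so I would treat them separately, starting with the easy one. Here $\pi^1,\pi^2$ in the statement denote the projections of $\sfX\times\sfX$, so that the map $(x,x')\mapsto (x,f(x),x',f'(x'))$ is what is meant.

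For the \emph{if} direction, take $\eta\in\Gamma(\mu,\mu')$ and set $\sigma=(\pi^1\times f\circ\pi^1\times \pi^2\times f'\circ\pi^2)_\sharp\eta$. Its projection on the first two factors is $(\ii\times f)_\sharp(\pi^1_\sharp\eta)=(\ii\times f)_\sharp\mu=\gamma$. Symmetrically, its projection on the last two factors is $\gamma'$, so $\sigma\in\Gamma(\gamma,\gamma')$. Since $\gamma,\gamma'$ have finite $p$-moments, so does $\sigma$.

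For the \emph{only if} direction, take $\sigma\in\Gamma(\gamma,\gamma')$ and define $\eta=(\pi^{1,3})_\sharp\sigma\in\cP(\sfX\times\sfX)$. Its marginals are the first marginals of $\gamma$ and $\gamma'$, namely $\mu$ and $\mu'$, so $\eta\in\Gamma(\mu,\mu')$. It remains to show that $\sigma$ is the image of $\eta$ under the map above. The key point is that $\sigma$ is concentrated on a graph over the $(x,x')$ variables. Because $\pi^{1,2}_\sharp\sigma=\gamma$ and $\gamma$ is concentrated on the Borel graph $\{(x,y): y=f(x)\}$, we get $y=f(x)$ for $\sigma$-a.e. $(x,y,x',y')$. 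Likewise $y'=f'(x')$ for $\sigma$-a.e. point. Hence $\sigma$ is concentrated on the set $\{(x,f(x),x',f'(x'))\}$, which is the image of $\sfX\times\sfX$ under $T=(\pi^1, f\circ\pi^1,\pi^2,f'\circ\pi^2)$. Since $T\circ\pi^{1,3}$ is then the identity $\sigma$-a.e., we obtain $\sigma=(T\circ\pi^{1,3})_\sharp\sigma=T_\sharp\eta$.

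The only mildly delicate step is measurability: the set $\{y=f(x)\}$ need not be closed, and $f$ is defined only $\mu$-a.e. I would fix a Borel representative of $f$. Its graph is then Borel, being the preimage of the diagonal of $\sfY\times\sfY$ under the Borel map $(x,y)\mapsto(f(x),y)$, and $\gamma$ gives it full mass. Changing $f$ on a $\mu$-null set does not affect $T_\sharp\eta$, since the first marginal of $\eta$ is $\mu$. With this taken care of, the argument goes through.
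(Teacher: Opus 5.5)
Your proposal is correct and follows essentially the same route as the paper: the converse direction takes $\eta=\pi^{1,3}_\sharp\sigma$, uses that $\sigma$ is concentrated on $\{y=f(x),\ y'=f'(x')\}$, and identifies $\sigma$ with the push-forward of $\eta$. Your added care about choosing a Borel representative of $f$, so that its graph is Borel and carries full mass, is a welcome detail that the paper leaves implicit.
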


\begin{proof}
The converse implication is immediate: if $\sigma = (\pi^1\times f \times \pi^2 \times f')_\sharp \eta$ for some $\eta \in \Gamma(\mu,\mu')$ then we can check directly that $\sigma \in \Gamma(\gamma,\gamma')$. 

Conversely, take $\sigma \in \Gamma(\gamma,\gamma')$. Let $\eta = \pi^{1,3}_\sharp \sigma$, necessarily $\eta \in \Gamma(\mu,\mu')$. Moreover, as the first marginal of $\sigma$ is $\gamma$, for $\sigma$-a.e. $(x,y,x',y')$ we have $y =f (x)$, and as the second marginal is $\gamma'$ for $\sigma$-a.e. $(x,y,x',y')$ we have $y' =f' (x')$. If $\phi$ is any test function on $\sfX \times \sfY \times \sfX \times \sfY$ then
\begin{align*}
\int_{\sfX \times \sfY \times \sfX \times \sfY} \phi(x,y,x',y') \, \d \sigma(x,y,x',y') & = \int_{\sfX \times \sfY \times \sfX \times \sfY} \phi(x,f(x),x',f'(x')) \, \d \sigma(x,y,x',y') \\
& = \int_{\sfX \times \sfX} \phi(x,f(x),x',f'(x'))\, \d \eta(x,x'), 
\end{align*}
which means $\sigma = (\pi^1\times f \times \pi^2 \times f')_\sharp \eta$. 
\end{proof}

From the definition of $W_p$ on the space $\cP_p(\sfX \times \sfY)$ and Lemma~\ref{lm:couplings_sigma_eta}, we obtain the following representation for the distance $\sfd_{\rmT\rmL_p}$, which was taken as a definition in \cite{GarciaTrillos-Slepcev16}:
\begin{equation}
\label{eq:expr_TL_p}
\sfd_{\rmT\rmL_p}((\mu,f),(\mu',f')) = \left( \min_{\eta \in \Gamma(\mu,\mu')} \int_{\sfX \times \sfX} \left( \sfd_\sfX(x,x')^p + \sfd_\sfY(f(x),f'(x'))^p \right) \, \d \eta(x,x') \right)^{1/p}. 
\end{equation}

For any $(\mu,f) \in \rmT\rmL_p(\sfX;\sfY)$, we can choose a representative of $f \in \rmL^p(\mu;\sfX;\sfY)$ which is Borel as a function of $x$. For our purposes it will be important to select this representative also in a Borel way as we vary $\mu$. This can be achieved as a corollary of a universal disintegration theorem.

\begin{proposition}
\label{prop:represent_TLp}
There exists $\frS : \sfX \times  \rmT\rmL_p(\sfX;\sfY) \to \sfY$ Borel such that, for any $(\mu,f) \in \rmT\rmL_p(\sfX;\sfY)$, there holds $f(x) = \frS(x,(\mu,f))$ for $\mu$-a.e. $x$. 
\end{proposition}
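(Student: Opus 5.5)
The plan is to reduce the statement to a universal (jointly measurable) disintegration theorem on the product space $\sfX \times \sfY$, applied along the embedding $\iota$. Since $\iota$ is, by definition of $\sfd_{\rmT\rmL_p}$, an isometry from $\rmT\rmL_p(\sfX;\sfY)$ onto $\cPg_p(\sfX\times\sfY) \subset \cP_p(\sfX\times\sfY)$, it suffices to produce a Borel map $\frG : \sfX \times \cP(\sfX\times\sfY) \to \cP(\sfY)$ such that, for every $\gamma \in \cP(\sfX\times\sfY)$ with first marginal $\mu = \pi^1_\sharp\gamma$, the family $x \mapsto \frG(x,\gamma)$ is a disintegration of $\gamma$ with respect to $\mu$. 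Such a map exists by the classical result on measurable dependence of disintegrations on the measure: on a standard Borel space a kernel can be chosen jointly measurable in the point and in the disintegrated measure (Kallenberg, \emph{Random Measures}, Thm.~1.25, or Dellacherie--Meyer). If a self-contained argument is preferred, the construction goes as follows. Fix a countable algebra $\{A_k\}$ generating the Borel sets of $\sfY$ and an increasing sequence of finite Borel partitions $\cP_n$ of $\sfX$ generating its $\sigma$-algebra. Set
\begin{equation*}
\frG_n(x,\gamma)(A_k) = \frac{\gamma(P\times A_k)}{\mu(P)}, \qquad x\in P\in\cP_n,
\end{equation*}
with the convention $0$ when $\mu(P)=0$. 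By the martingale convergence theorem, $\lim_n \frG_n(x,\gamma)(A_k)$ exists for $\mu$-a.e.\ $x$. Moreover, the set of $(x,\gamma)$ where all these limits exist and define a probability measure (after a tightness/compact-class check) is Borel. On that set one defines $\frG$ as the limit, and elsewhere a fixed $\delta_{y_0}$. Joint Borel measurability follows because each $\frG_n$ is Borel in $(x,\gamma)$, the map $\gamma \mapsto \gamma(P\times A_k)$ being Borel on $\cP(\sfX\times\sfY)$.

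Next I will turn the kernel into a point. For $\gamma = \iota(\mu,f)$, the disintegration is $\delta_{f(x)}$ for $\mu$-a.e.\ $x$, so $\frG(x,\iota(\mu,f)) = \delta_{f(x)}$ $\mu$-a.e. I therefore need a Borel map $\beta : \cP(\sfY)\to\sfY$ with $\beta(\delta_y) = y$. The set $D=\{\delta_y : y\in\sfY\}$ is closed in $\cP(\sfY)$, and $y\mapsto\delta_y$ is a homeomorphism onto $D$. So I can set $\beta(\delta_y)=y$ on $D$ and $\beta \equiv y_0$ off $D$; this is Borel.

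Finally, define
\begin{equation*}
\frS(x,(\mu,f)) = \beta\bigl(\frG(x,\iota(\mu,f))\bigr).
\end{equation*}
This is Borel as a composition of Borel maps, $\iota$ being continuous (an isometry). Also, $\frS(x,(\mu,f)) = f(x)$ for $\mu$-a.e.\ $x$ by the previous step.

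The main obstacle is the first step: ensuring that the disintegration can be chosen \emph{jointly} Borel in $(x,\gamma)$, and not just separately for each $\gamma$. In particular one must check that the exceptional set where the martingale limits fail to form a probability measure is Borel in $(x,\gamma)$. I would try to handle this by citing the universal disintegration theorem directly, and fall back on the martingale construction above if needed.
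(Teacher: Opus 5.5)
Your proposal is correct and follows the paper's proof essentially verbatim: the paper also takes a universal disintegration kernel $\frI : \sfX \times \cP(\sfX\times\sfY) \to \cP(\sfY)$ from Kallenberg (it cites Corollary 1.26), uses the closedness of the set of Dirac masses to build a Borel left inverse $\frH$ of $y \mapsto \delta_y$, and sets $\frS(x,(\mu,f)) = \frH(\frI(x,\iota(\mu,f)))$. The martingale fallback is not needed. The one point worth stating explicitly is that $\iota$ remains continuous as a map into $\cP(\sfX\times\sfY)$ with the narrow topology, because $W_p$-convergence implies narrow convergence.
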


\begin{proof}
From \cite[Corollary 1.26]{kallenberg2017random}, there exists a Borel map $\frI : \sfX \times \cP(\sfX \times \sfY) \to \cP(\sfY)$ such that, for all $\gamma \in \cP(\sfX \times \sfY)$, the family $(\frI(x,\gamma))_{x \in \sfX}$ coincides $\pi^1_\sharp \gamma$-a.e. with the disintegration of $\gamma$ with respect to its first variable. Moreover, as $\{ \delta_y \ : \ y \in \sfY \}$ is closed it is easy to construct a Borel map $\frH : \cP(\sfY) \to \sfY$ such that $\frH(\delta_y) = y$ for any $y \in \sfY$. We conclude by defining $\frS(x,(\mu,f)) = \frH( \frI(x, \iota(\mu,f) ))$. % The conclusion follows as the Borel $\sigma$-algebra of $\cP_p(\sfX)$ is the same as the one of $\cP(\sfX)$. 
\end{proof}

\subsection{The metric structure of $\rmT\rmL_p(\sfX;\sfY)$}

\begin{theorem}
\label{thm:tl_polish}
The space $(\rmT\rmL_p(\sfX;\sfY),\sfd_{\rmT\rmL_p})$ is a separable metric space. 
It is completely metrizable: there exists a metric $\tilde{\sfd}_{\rmT\rmL_p}$ on $\rmT\rmL_p(\sfX;\sfY)$ which generates the same topology as $\sfd_{\rmT\rmL_p}$ and such that $(\rmT\rmL_p(\sfX;\sfY),\tilde{\sfd}_{\rmT\rmL_p})$ is complete.
\end{theorem}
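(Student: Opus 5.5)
Since $\iota$ is by definition an isometry from $(\rmT\rmL_p(\sfX;\sfY),\sfd_{\rmT\rmL_p})$ onto $\cPg_p(\sfX\times\sfY)$ with the $W_p$ distance, I will prove everything for the subset $\cPg_p(\sfX\times\sfY)$ of the Polish space $(\cP_p(\sfX\times\sfY),W_p)$. Separability is then immediate, because a subset of a separable metric space is separable. For complete metrizability I will use Alexandrov's theorem: a subset of a Polish space is completely metrizable if and only if it is a $G_\delta$. The core of the proof is therefore to show that $\cPg_p(\sfX\times\sfY)$ is a $G_\delta$ in $\cP_p(\sfX\times\sfY)$. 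Pulling back by $\iota$ a complete metric on this $G_\delta$ that is compatible with its topology gives the required $\tilde\sfd_{\rmT\rmL_p}$.

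To obtain the $G_\delta$ property I will quantify Lemma~\ref{lm:char_cpg}. For $\gamma\in\cP_p(\sfX\times\sfY)$ and $\varepsilon>0$, set
\begin{equation*}
\Phi_\varepsilon(\gamma)=\sup\Big\{ \|\sfd_\sfY(y,y')\|_{L^p(\sigma)} \ : \ \sigma\in\Gamma(\gamma,\gamma),\ \|\sfd_\sfX(x,x')\|_{L^p(\sigma)}\le\varepsilon\Big\}.
\end{equation*}
I claim that $\cPg_p=\bigcap_{k}\bigcup_{m}\{\gamma:\Phi_{1/m}(\gamma)<1/k\}$. The direction $\supseteq$ goes as follows. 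Suppose $\Phi_\varepsilon(\gamma)\to 0$ as $\varepsilon\to0$ (note that $\Phi_\varepsilon$ is monotone in $\varepsilon$). Take any $\sigma\in\Gamma(\gamma,\gamma)$ with $\|\sfd_\sfX\|_{L^p(\sigma)}=0$. This $\sigma$ is admissible for every $\varepsilon$, so $\|\sfd_\sfY\|_{L^p(\sigma)}=0$, and Lemma~\ref{lm:char_cpg} concludes. For $\subseteq$, suppose $\gamma=\iota(\mu,f)$. By Lemma~\ref{lm:couplings_sigma_eta}, every $\sigma$ has the form $(\pi^1\times f\times\pi^2\times f)_\sharp\eta$ with $\eta\in\Gamma(\mu,\mu)$ and $\int\sfd_\sfX(x,x')^p\,\d\eta\le\varepsilon^p$, and we must show that $\int\sfd_\sfY(f(x),f(x'))^p\,\d\eta\to0$ uniformly in such $\eta$. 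I will use Lusin's theorem: pick a compact $K$ with $\mu(K^c)<\delta$, $f|_K$ uniformly continuous, and $\int_{K^c}\sfd_\sfY(y_0,f)^p\,\d\mu<\delta$. Split the integral over $K\times K$ and over its complement. On $K\times K$, use the uniform continuity of $f|_K$ together with Markov's inequality on $\{\sfd_\sfX(x,x')>r\}$; the contribution there with large $\sfd_\sfY$ is controlled by uniform $p$-integrability of $f_\sharp\mu$, since both marginals of $\eta$ equal $\mu$. The complement is handled by $\sfd_\sfY(f(x),f(x'))^p\le 2^{p-1}(\sfd_\sfY(y_0,f(x))^p+\sfd_\sfY(y_0,f(x'))^p)$ and the choice of $K$.

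Finally, I need each set $U_{m,k}=\{\Phi_{1/m}<1/k\}$ to be open, or at least I need the union over $m$ to be open. Rather than attempt this directly, I will use the lower semicontinuity of $\gamma\mapsto\Phi_\varepsilon(\gamma)$ with the strict inequality in the constraint, which makes superlevel sets open. I expect this to be the main obstacle, because the natural semicontinuity goes the wrong way. Concretely, I will show that $\{\gamma:\exists\sigma\in\Gamma(\gamma,\gamma),\ \|\sfd_\sfX\|_{L^p(\sigma)}<\varepsilon,\ \|\sfd_\sfY\|_{L^p(\sigma)}>\theta\}$ is open. This follows from lower hemicontinuity of $\Gamma$ (Lemma~\ref{lm:Gamma_hemic}): approximate $\sigma$ by $\sigma_n\in\Gamma(\gamma_n,\gamma_n)$ in $W_p$, and use the continuity of $\sigma\mapsto\|\sfd\|_{L^p(\sigma)}$ under $W_p$ convergence, which holds since these integrands are continuous with $p$-growth. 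The complement $C_{\varepsilon,\theta}$ of this set is therefore closed. The characterization then becomes $\cPg_p=\bigcap_k\bigcup_m C_{1/m,1/k}$, which is only an $F_{\sigma\delta}$, so I will need one more adjustment. Two options are available. The first is to use the monotonicity in $m$ and replace the union by a countable intersection, via the fact that $\gamma\mapsto\inf_\varepsilon\Phi_\varepsilon$ is upper semicontinuous. The second is to replace $\sup$ by an infimum-type quantity that is genuinely upper semicontinuous, for instance $\inf_{\varepsilon}\big(\varepsilon+\Phi_\varepsilon(\gamma)\big)$, and check that it vanishes exactly on $\cPg_p$, which gives $\cPg_p=\bigcap_k\{\inf_\varepsilon(\varepsilon+\Phi_\varepsilon)<1/k\}$. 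In the second option each set is a union over $\varepsilon$ of sets $\{\Phi_\varepsilon<1/k-\varepsilon\}$, and I will verify their openness using upper hemicontinuity and compactness of $\Gamma$ from Lemma~\ref{lm:Gamma_hemic}. Openness should hold there by an argument on limit couplings, with a small slack in $\varepsilon$ absorbing the non-strict constraint.
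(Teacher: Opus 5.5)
Your reduction matches the paper's: $\iota$ is an isometry onto $\cPg_p(\sfX\times\sfY)$, separability is inherited from $\cP_p(\sfX\times\sfY)$, and everything reduces to showing that $\cPg_p(\sfX\times\sfY)$ is a $G_\delta$. The problem is the step you flag as the main obstacle, which you misdiagnose and leave unproved.

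With the \emph{closed} constraint $\|\sfd_\sfX(x,x')\|_{L^p(\sigma)}\le\varepsilon$, the function $\Phi_\varepsilon$ is upper semicontinuous. Let $\gamma_n\to\gamma$ and let $\sigma_n\in\Gamma(\gamma_n,\gamma_n)$ be admissible near-maximizers. The upper hemicontinuity and compactness in Lemma~\ref{lm:Gamma_hemic} give a subsequence converging in $W_p$ to some $\sigma\in\Gamma(\gamma,\gamma)$. Since $\sfd_\sfX(x,x')^p$ and $\sfd_\sfY(y,y')^p$ are continuous with $p$-growth, both norms pass to the limit. So $\sigma$ is admissible for $\gamma$, and $\limsup_n\Phi_\varepsilon(\gamma_n)\le\Phi_\varepsilon(\gamma)$.

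Hence your sets $\{\Phi_{1/m}<1/k\}$ are already open, and your first characterization is directly a $G_\delta$. No slack in $\varepsilon$ is needed, because non-strict inequalities survive limits. The detour through lower hemicontinuity produces sets with the wrong semicontinuity, which is why you landed on an $F_{\sigma\delta}$; drop it. Both of your ``options'' work only because of the fact above, which you assert or defer rather than prove.

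Once repaired, your proof is correct but longer than necessary. The paper runs the same compactness argument at $\varepsilon=0$. The set $H_m$ of those $\gamma$ admitting $\sigma\in\Gamma(\gamma,\gamma)$ with $\|\sfd_\sfX(x,x')\|_{L^p(\sigma)}=0$ and $\|\sfd_\sfY(y,y')\|_{L^p(\sigma)}\ge 1/m$ is closed. Lemma~\ref{lm:char_cpg} then gives immediately $\bigcup_m H_m=\cP_p(\sfX\times\sfY)\setminus\cPg_p(\sfX\times\sfY)$.

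Working with $\varepsilon>0$ instead forces you to prove that $\Phi_\varepsilon(\iota(\mu,f))\to0$, i.e.\ that every singleton of $\rmT\rmL_p(\sfX;\sfY)$ is equidispersed, via your Lusin argument. That argument is fine, although the cross terms on $(K\times K)^c$ need the absolute continuity of the integral of $\sfd_\sfY(y_0,f)^p$ with respect to $\mu$, not just the choice of $K$. The fact itself is true and is used later in the paper, where it comes for free from Theorem~\ref{thm:rel_cpct_TL}. For the $G_\delta$ property it is superfluous.
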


\begin{proof}
The first part is immediate as the embedding of $\rmT\rmL_p(\sfX;\sfY)$ into $\cP_p(\sfX \times \sfY)$ is injective, and by definition, an isometry. Moreover $\cP_p(\sfX \times \sfY)$ is separable, thus $\rmT\rmL_p(\sfX;\sfY)$ too.

Given how we defined $\rmT\rmL_p(\sfX;\sfY)$, it is completely metrizable only if $\cPg_p(\sfX \times \sfY)$ is completely metrizable.
Thus it is enough to prove that $\cPg_p(\sfX \times \sfY)$ is a $G_\delta$, as a $G_\delta$ of a Polish space admits a metric which makes the space complete \cite[Lemma 3.34]{aliprantis2006infinite}. For a fixed $m \geq 1$, consider
\begin{equation}
\label{eq:def_Hm}
H_m = \left\{ \gamma \in \cP_p(\sfX \times \sfY) \ :  \; \exists \sigma \in \Gamma(\gamma,\gamma), \, \| \sfd_{\sfX}(x,x') \|_{L^p(\sigma)} = 0 \text{ and } \| \sfd_{\sfY}(y,y') \|_{L^p(\sigma)} \geq \frac{1}{m}  \right\}. 
\end{equation}
The set $H_m$ is closed: if $(\gamma_n)_{n \in \N}$ is a converging sequence in $H_m$, together with an associated $\sigma_n \in \Gamma(\gamma_n,\gamma_n)$ such that $\| \sfd_{\sfX}(x,x') \|_{L^p(\sigma_n)} = 0$ and $ \| \sfd_{\sfY}(y,y') \|_{L^p(\sigma_n)} \geq \frac{1}{m}$ for all $n$, then by hemicontinuity and compactness of $\Gamma$, see Lemma~\ref{lm:Gamma_hemic}, up to extraction $(\sigma_n)_{n \in \N}$ converges to $\sigma \in \Gamma(\gamma,\gamma)$ in $W_p$ and we can pass to  the limit: $\| \sfd_{\sfX}(x,x') \|_{L^p(\sigma)} = 0$ and $ \| \sfd_{\sfY}(y,y') \|_{L^p(\sigma)} \geq \frac{1}{m}$. 
Moreover by Lemma~\ref{lm:char_cpg} we see that 
\begin{equation*}
\bigcup_{m \ge 1} H_m = \cP_p(\sfX \times \sfY) \setminus \cPg_p(\sfX \times \sfY).  
\end{equation*}
That is, the complement of $\cPg_p(\sfX \times \sfY)$ is a countable union of closed sets, thus $\cPg_p(\sfX \times \sfY)$ is a countable intersection of open sets, that is, a $G_\delta$.   
\end{proof}

\begin{remark}
Note that $(\rmT\rmL_p(\sfX;\sfY),\sfd_{\rmT\rmL_p})$ is complete if and only if $\cPg_p(\sfX \times \sfY)$ is closed in $\cPg(\sfX \times \sfY)$ for the $W_p$ topology. This is clearly not the case if $\sfX = [0,1]$, or more generally if there exists at least one atomless measure supported on $\sfX$.
\end{remark}

Next we move to the characterization of relatively compact sets in $\rmT\rmL_p(\sfX;\sfY)$. We will rely on a notion of ``equidispersivity'' that we introduce, that one can think as an integral counterpart of equicontinuity.

\begin{definition}
A set $\cF \subseteq \rmT\rmL_p(\sfX;\sfY)$ is said equidispersed if: for every $\varepsilon > 0$, there exists $\delta > 0$ such that 
\begin{equation*}
\forall (\mu,f) \in \cF, \, \forall \eta \in \Gamma(\mu,\mu), \qquad \| \sfd_{\sfX}(x,x') \|_{L^p(\eta)} < \delta \quad \Rightarrow \quad \| \sfd_{\sfY}(f(x),f(x')) \|_{L^p(\eta)} < \varepsilon.
\end{equation*}
\end{definition}

Equivalently, it is standard to see that a set $\cF \subseteq \rmT\rmL_p(\sfX;\sfY)$ is equidispersed if there exists a modulus of continuity $\omega_\cF$ such that: % for every $(\mu,f) \in \cF$  
\begin{equation*}
\forall (\mu,f) \in \cF, \, \forall \eta \in \Gamma(\mu,\mu), \qquad	  \| \sfd_{\sfY}(f(x),f(x')) \|_{L^p(\eta)} \le \omega_\cF(\| \sfd_{\sfX}(x,x') \|_{L^p(\eta)}).
\end{equation*}
Here by modulus of continuity $\omega : \R_+ \to \R_+$ we mean a non-decreasing continuous function with $\omega(0) = 0$.

\begin{theorem}
\label{thm:rel_cpct_TL}
A subset $\cF \subseteq \rmT\rmL_p(\sfX;\sfY)$ is relatively compact if and only if $\iota(\cF)$ is relatively compact in $\cP_p(\sfX \times \sfY)$ and $\cF$ is equidispersed.
\end{theorem}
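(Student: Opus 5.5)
Since $\iota$ is an isometry onto $\cPg_p(\sfX\times\sfY)$, relative compactness of $\cF$ in $\rmT\rmL_p(\sfX;\sfY)$ means: every sequence $(\mu_n,f_n)$ in $\cF$ has a subsequence with $\iota(\mu_n,f_n)\to\gamma$ in $W_p$ and $\gamma\in\cPg_p(\sfX\times\sfY)$. So I would split the statement into two facts. First, sequential compactness in $\cP_p(\sfX\times\sfY)$, which is exactly relative compactness of $\iota(\cF)$. Second, the requirement that all limit points stay in the graph set, which I expect to be equivalent to equidispersivity.

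\emph{Sufficiency.} Assume $\iota(\cF)$ is relatively compact and $\cF$ is equidispersed with modulus $\omega_\cF$. Take $\gamma_n=\iota(\mu_n,f_n)\to\gamma$ in $W_p$. I would show that $\gamma$ satisfies~\eqref{eq:characterization_det}, so that Lemma~\ref{lm:char_cpg} gives $\gamma\in\cPg_p$.
\begin{itemize}
\item Fix $\sigma\in\Gamma(\gamma,\gamma)$ with $\|\sfd_\sfX(x,x')\|_{L^p(\sigma)}=0$.
\item By lower hemicontinuity of $\Gamma$ (Lemma~\ref{lm:Gamma_hemic}), there are $\sigma_n\in\Gamma(\gamma_n,\gamma_n)$ with $\sigma_n\to\sigma$ in $W_p$.
\item By Lemma~\ref{lm:couplings_sigma_eta}, $\sigma_n=(\pi^1\times f_n\times\pi^2\times f_n)_\sharp\eta_n$ for some $\eta_n\in\Gamma(\mu_n,\mu_n)$.
\item Equidispersivity then gives $\|\sfd_\sfY(y,y')\|_{L^p(\sigma_n)}\le\omega_\cF(\|\sfd_\sfX(x,x')\|_{L^p(\sigma_n)})$.
\item The functions $\sfd_\sfX(x,x')^p$ and $\sfd_\sfY(y,y')^p$ are continuous with $p$-growth on the product space, so both $L^p$ norms pass to the limit under $W_p$ convergence.
\item This yields $\|\sfd_\sfY(y,y')\|_{L^p(\sigma)}\le\omega_\cF(0)=0$.
\end{itemize}

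\emph{Necessity.} Relative compactness of $\iota(\cF)$ is immediate, since the continuous image of a relatively compact set under the isometry $\iota$ is relatively compact in $\cP_p(\sfX\times\sfY)$. For equidispersivity, argue by contradiction. Suppose there are $\varepsilon>0$, $(\mu_n,f_n)\in\cF$ and $\eta_n\in\Gamma(\mu_n,\mu_n)$ with $\|\sfd_\sfX(x,x')\|_{L^p(\eta_n)}\to0$ but $\|\sfd_\sfY(f_n(x),f_n(x'))\|_{L^p(\eta_n)}\ge\varepsilon$.
\begin{itemize}
\item Extract a subsequence with $(\mu_n,f_n)\to(\mu,f)$ in $\rmT\rmL_p$, so $\gamma_n\to\gamma=\iota(\mu,f)\in\cPg_p$.
\item Set $\sigma_n=(\pi^1\times f_n\times\pi^2\times f_n)_\sharp\eta_n\in\Gamma(\gamma_n,\gamma_n)$.
\item By compactness and upper hemicontinuity of $\Gamma$ (Lemma~\ref{lm:Gamma_hemic}, applied to the converging marginals), a further subsequence gives $\sigma_n\to\sigma\in\Gamma(\gamma,\gamma)$ in $W_p$.
\item Passing to the limit in the norms as before gives $\|\sfd_\sfX(x,x')\|_{L^p(\sigma)}=0$ and $\|\sfd_\sfY(y,y')\|_{L^p(\sigma)}\ge\varepsilon$.
\item This contradicts Lemma~\ref{lm:char_cpg}, since $\gamma\in\cPg_p$.
\end{itemize}

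The main technical point is the passage to the limit of $\|\sfd_\sfX(x,x')\|_{L^p(\sigma_n)}$ and $\|\sfd_\sfY(y,y')\|_{L^p(\sigma_n)}$. These integrands are continuous but unbounded, so convergence of their integrals is not automatic. It holds because $W_p$ convergence on $\sfX\times\sfY\times\sfX\times\sfY$ is equivalent to convergence against continuous functions of $p$-growth, and the integrands are bounded by a constant times $\sfd((x,y,x',y'),z_0)^p+1$.

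The second subtle point is that compactness of the couplings $\sigma_n$ must hold in $W_p$ and not merely narrowly. This is provided by Lemma~\ref{lm:Gamma_hemic}, because uniform $p$-integrability of the marginals transfers to the couplings.
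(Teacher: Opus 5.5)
Your proposal is correct and follows the paper's proof essentially step by step. Sufficiency uses lower hemicontinuity of $\Gamma$, Lemma~\ref{lm:couplings_sigma_eta} and the characterization of Lemma~\ref{lm:char_cpg}; necessity argues by contradiction, using compactness and upper hemicontinuity of $\Gamma$ to produce a limit coupling $\sigma\in\Gamma(\gamma,\gamma)$ that violates~\eqref{eq:characterization_det}, and your justification of passing to the limit in the $L^p(\sigma_n)$ norms via $p$-growth test functions fills in a detail the paper leaves implicit.
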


In particular, if $\sfd_\sfX$ and $\sfd_\sfY$ are bounded, the set $\cF \subseteq \rmT\rmL_p(\sfX;\sfY)$ is relatively compact if and only if $\iota(\cF)$ is tight and $\cF$ is equidispersed. In the case $\sfX$, $\sfY$ compact, the space $\cP(\sfX \times \sfY)$ is tight, thus $\cF$ is relatively compact if and only if it is equidispersed.

\begin{proof}
First assume that $\cF \subseteq \rmT\rmL_p(\sfX;\sfY)$ is relatively compact. Clearly it implies that $\iota(\cF)$ is relatively compact in $\cP_p(\sfX \times \sfY)$. Assume it is not equidispersed. That means that there exists $\varepsilon_0 > 0$, a sequence $(\delta_n)_{n \in \N}$ converging to $0$, and  $(\mu_n,f_n) \in \rmT\rmL_p(\sfX;\sfY)$, as well as $\eta_n \in \Gamma(\mu_n,\mu_n)$ such that for all $n$, $\| \sfd_{\sfY}(f_n(x),f_n(x')) \|_{L^p(\eta_n)} \ge \varepsilon_0 $ but $\| \sfd_{\sfX}(x,x') \|_{L^p(\eta_n)} < \delta_n$.
We write $\gamma_n = \iota(\mu_n,f_n)$, and $\sigma_n = (\pi^1 \times f_n \times \pi^2 \times f_n)_\sharp \eta_n$: we have $\sigma_n \in \Gamma(\gamma_n,\gamma_n)$, see Lemma~\ref{lm:couplings_sigma_eta}.
By relative compactness of $\cF$, up to extraction $\gamma_n$ converges in $\cP_p(\sfX \times \sfY)$ to a limit $\gamma \in \cPg_p(\sfX \times \sfY)$. By hemicontinuity and compactness of $\Gamma$, see Lemma~\ref{lm:Gamma_hemic}, up to extraction $(\sigma_n)_{n \in \N}$ converges to a limit $\sigma \in \Gamma(\gamma,\gamma)$.
We have
\begin{equation*}
\| \sfd_{\sfX}(x,x') \|_{L^p(\sigma)} = \lim_{n \to + \infty} \| \sfd_{\sfX}(x,x') \|_{L^p(\sigma_n)} = \lim_{n \to + \infty} \| \sfd_{\sfX}(x,x') \|_{L^p(\eta_n)} = 0,
\end{equation*}
while on the other hand 
\begin{equation*}
\| \sfd_{\sfY}(y,y') \|_{L^p(\sigma)} = \lim_{n \to + \infty} \| \sfd_{\sfY}(y,y') \|_{L^p(\sigma_n)} \geq \liminf_{n \to + \infty} \| \sfd_{\sfY}(f_n(x),f_n(x')) \|_{L^p(\eta_n)} \ge \varepsilon_0. 
\end{equation*}
That is, $\sigma \in \Gamma(\gamma,\gamma)$ with $\| \sfd_{\sfX}(x,x') \|_{L^p(\sigma)} = 0 $ but $\| \sfd_{\sfY}(y,y') \|_{L^p(\sigma)} > 0$: it contradicts $\gamma \in \cPg_p(\sfX \times \sfY)$. 

Conversely, assume $\cF$ is equidispersed and $\iota(\cF)$ is relatively compact in $\cP_p(\sfX \times \sfY)$, we call $\omega_\cF$ a modulus of equidispersivity. Let $(\mu_n,f_n)_{n \in \N}$ a sequence in $\cF$. As $\iota(\cF)$ is relatively compact, then $\gamma_n = \iota(\mu_n,f_n)$ converges as $n \to + \infty$, up to extraction, to $\gamma \in \cP_p(\sfX \times \sfY)$. We only need to prove that $\gamma \in \cPg_p(\sfX \times \sfY)$, we use the characterization of Lemma~\ref{lm:char_cpg}. So let $\sigma \in \Gamma(\gamma,\gamma)$. By hemicontinuity (see Lemma~\ref{lm:Gamma_hemic}), there exists a sequence $(\sigma_n)_{n \in \N}$ with $\sigma_n \in \Gamma(\gamma_n,\gamma_n)$ for all $n$ which converges to $\sigma$ in $\cP_p(\sfX \times \sfY \times \sfX \times \sfY)$. By Lemma~\ref{lm:couplings_sigma_eta}, we can write $\sigma_n = (\pi^1 \times f_n \times \pi^2 \times f_n)_\sharp \eta_n$ for some $\eta_n \in \Gamma(\mu_n,\mu_n)$.
In particular $\| \sfd_{\sfX}(x,x') \|_{L^p(\eta_n)}$ and $\| \sfd_{\sfY}(y,y') \|_{L^p(\sigma_n)} = \| \sfd_{\sfY}(f_n(x),f_n(x')) \|_{L^p(\eta_n)}$ converge respectively to $\| \sfd_{\sfX}(x,x') \|_{L^p(\sigma)}$ and $\| \sfd_{\sfY}(y,y') \|_{L^p(\sigma)}$ as $n \to + \infty$. By continuity of $\omega_\cF$,
\begin{align*}
\| \sfd_{\sfY}(y,y') \|_{L^p(\sigma)} & = \lim_{n \to + \infty} \| \sfd_{\sfY}(f_n(x),f_n(x')) \|_{L^p(\eta_n)} \\
& \le \lim_{n \to + \infty} \omega_\cF \left( \| \sfd_{\sfX}(x,x') \|_{L^p(\eta_n)} \right) = \omega_\cF \left( \| \sfd_{\sfX}(x,x') \|_{L^p(\sigma)} \right). 
\end{align*}
As $\omega_\cF(0) = 0$, we conclude to $\| \sfd_{\sfY}(y,y') \|_{L^p(\sigma)} = 0$ if $\| \sfd_{\sfX}(x,x') \|_{L^p(\sigma)} = 0$. It proves $\gamma \in \cPg_p(\sfX \times \sfY)$ thanks to Lemma~\ref{lm:char_cpg}. 
\end{proof}

With the help of Theorem~\ref{thm:rel_cpct_TL}, we recover the equivalent characterizations of convergence in $\rmT\rmL_p(\sfX\times \sfY)$ already studied in \cite{GarciaTrillos-Slepcev16}.
A sequence $(\eta_n)_{n \in \N} \in \cP_p(\sfX \times \sfX)$ is called \emph{stagnating} if 
\begin{equation*}
\lim_{n \to + \infty} \| \sfd_{\sfX}(x,x') \|_{L^p(\eta_n)} = 0. 
\end{equation*}
In particular if $\mu_n \to \mu$ in $\cP_p(\sfX)$ as $n \to + \infty$ and for all $n$, $\eta_n$ is an optimal transport plan between $\mu_n$ and $\mu$ then $(\mu_n)_{n \in \N}$ is stagnating. The following result has already been stated and proved in \cite[Proposition 3.12]{GarciaTrillos-Slepcev16}, we provide here another proof relying on equidispersivity. 

\begin{proposition}
\label{prop:char_conv_TLp}
Let $(\mu_n,f_n)_{n \in \N}$ a sequence in $\rmT\rmL_p(\sfX;\sfY)$ and $(\mu,f) \in \rmT\rmL_p(\sfX;\sfY)$. The followings are equivalent.
\begin{enumerate}
\item The sequence $(\mu_n,f_n)$ converges to $(\mu,f)$ in $\rmT\rmL_p(\sfX;\sfY)$ as $n\to + \infty$.
\item There exists a stagnating sequence $\eta_n \in \Gamma(\mu_n,\mu)$ such that 
\begin{equation*}
\lim_{n \to + \infty} \| \sfd_{\sfY}(f_n(x),f(x')) \|_{L^p(\eta_n)} = 0.
\end{equation*}
\item There holds $\mu_n \to \mu$ in $\cP_p(\sfX)$ and for every stagnating sequence $\eta_n \in \Gamma(\mu_n,\mu)$, 
\begin{equation*}
\lim_{n \to + \infty} \| \sfd_{\sfY}(f_n(x),f(x')) \|_{L^p(\eta_n)} = 0.
\end{equation*}
\end{enumerate}
\end{proposition}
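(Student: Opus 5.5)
I would prove the cycle $1 \Rightarrow 3 \Rightarrow 2 \Rightarrow 1$. The implication $3 \Rightarrow 2$ is immediate: since $\mu_n \to \mu$ in $\cP_p(\sfX)$, take $\eta_n$ an optimal plan between $\mu_n$ and $\mu$, which is stagnating, and apply 3. For $2 \Rightarrow 1$, use the representation \eqref{eq:expr_TL_p}: the plan $\eta_n \in \Gamma(\mu_n,\mu)$ is admissible, so
\begin{equation*}
\sfd_{\rmT\rmL_p}((\mu_n,f_n),(\mu,f))^p \le \| \sfd_{\sfX}(x,x') \|_{L^p(\eta_n)}^p + \| \sfd_{\sfY}(f_n(x),f(x')) \|_{L^p(\eta_n)}^p \to 0 .
\end{equation*}

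The substantive direction is $1 \Rightarrow 3$. First, $\mu_n \to \mu$ in $\cP_p(\sfX)$ since projection on the first factor is $1$-Lipschitz from $\cP_p(\sfX\times\sfY)$ to $\cP_p(\sfX)$. Now fix an arbitrary stagnating sequence $\eta_n \in \Gamma(\mu_n,\mu)$ and set $\sigma_n = (\pi^1\times f_n\times\pi^2\times f)_\sharp\eta_n \in \Gamma(\gamma_n,\gamma)$, where $\gamma_n=\iota(\mu_n,f_n)$ and $\gamma=\iota(\mu,f)$ (Lemma~\ref{lm:couplings_sigma_eta}). It suffices to prove that every subsequence has a further subsequence along which $\| \sfd_{\sfY}(y,y') \|_{L^p(\sigma_n)} \to 0$. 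Since $\gamma_n \to \gamma$, Lemma~\ref{lm:Gamma_hemic} (compactness and upper hemicontinuity) gives a subsequence with $\sigma_n \to \sigma \in \Gamma(\gamma,\gamma)$ in $W_p$. Because $\sfd_\sfX(x,x')^p$ and $\sfd_\sfY(y,y')^p$ are continuous with $p$-growth on the fourfold product, we may pass to the limit: $\| \sfd_{\sfX}(x,x') \|_{L^p(\sigma)} = \lim \| \sfd_{\sfX}(x,x') \|_{L^p(\eta_n)} = 0$ by stagnation. Lemma~\ref{lm:char_cpg} applied to $\gamma \in \cPg_p$ then forces $\| \sfd_{\sfY}(y,y') \|_{L^p(\sigma)}=0$, and again by $W_p$ convergence $\| \sfd_{\sfY}(f_n(x),f(x')) \|_{L^p(\eta_n)} = \| \sfd_{\sfY}(y,y') \|_{L^p(\sigma_n)} \to 0$.

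The main delicate point is this passage to the limit of the integrals, which needs $W_p$ (not merely narrow) convergence of $\sigma_n$. This is exactly what Lemma~\ref{lm:Gamma_hemic} in its $W_p$ form provides, via uniform $p$-integrability of couplings with $W_p$-converging marginals. Alternatively, the argument can be phrased through Theorem~\ref{thm:rel_cpct_TL}: the convergent sequence is relatively compact, hence equidispersed. Note, however, that equidispersivity controls only self-couplings, so the direct argument via Lemma~\ref{lm:char_cpg} above is cleaner.
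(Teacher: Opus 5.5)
Your proof is correct, but it is organized differently from the paper's. The paper proves $1\Leftrightarrow 2$ directly from \eqref{eq:expr_TL_p}, as you do for $2\Rightarrow 1$. The substantive step is then $2\Rightarrow 3$ rather than your $1\Rightarrow 3$. Given the stagnating $\bar\eta_n$ of Point~2 and an arbitrary stagnating $\eta_n$, the paper glues them into $\sigma_n\in\Gamma(\mu_n,\mu,\mu)$ with ${\pi^{1,2}}_\sharp\sigma_n=\bar\eta_n$ and ${\pi^{1,3}}_\sharp\sigma_n=\eta_n$. It then splits $\|\sfd_\sfY(f_n(x_1),f(x_3))\|_{L^p(\sigma_n)}$ by the triangle inequality through $f(x_2)$. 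The term $\|\sfd_\sfY(f(x_2),f(x_3))\|_{L^p(\sigma_n)}$ is controlled by the equidispersivity of the singleton $\{(\mu,f)\}$ (Theorem~\ref{thm:rel_cpct_TL}), because ${\pi^{2,3}}_\sharp\sigma_n\in\Gamma(\mu,\mu)$ has vanishing $\sfd_\sfX$-cost.

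So the obstacle you note at the end, that equidispersivity only controls self-couplings, is exactly what the gluing lemma removes: it turns the cross-coupling into a self-coupling of the limit measure. Your route instead runs a direct compactness argument on $\Gamma(\gamma_n,\gamma)$ and concludes with Lemma~\ref{lm:char_cpg}. This is essentially the first half of the proof of Theorem~\ref{thm:rel_cpct_TL}, redone with cross-couplings. It needs no gluing and is self-contained, but it is purely qualitative, via subsequence extraction.

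Both routes ultimately rest on compactness. The paper, however, uses compactness only at the fixed limit, and therefore gets an explicit bound
\[
\|\sfd_\sfY(f_n(x),f(x'))\|_{L^p(\eta_n)} \le \|\sfd_\sfY(f_n(x),f(x'))\|_{L^p(\bar\eta_n)} + \omega\big(\|\sfd_\sfX(x,x')\|_{L^p(\bar\eta_n)}+\|\sfd_\sfX(x,x')\|_{L^p(\eta_n)}\big),
\]
where the modulus $\omega$ depends only on $(\mu,f)$.
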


\begin{proof}
\uline{$1 \Leftrightarrow 2$}. This equivalence is a direct consequence of~\eqref{eq:expr_TL_p} and the definition of a stagnating sequence. 

\uline{$2 \Rightarrow 3$}. Let $(\bar\eta_n)_{n \in \N}$ the stagnating sequence of Point 2, and let $(\eta_n)_{n \in \N}$ be another stagnating sequence. Then $W_p(\mu_n,\mu) \leq \| \sfd_{\sfX}(x,x') \|_{L^p(\bar \eta_n)}$ so $\mu_n \to \mu$ in $\cP_p(\sfX)$. Moreover by the gluing lemma \cite[Remark 5.3.3]{AGS} we can build $\sigma_n \in \Gamma(\mu_n,\mu,\mu)$ such that ${\pi^{1,2}}_\sharp \sigma_n = \bar\eta_n$ and ${\pi^{1,3}}_\sharp \sigma_n = \eta_n$. Then 
\begin{align*}
\| \sfd_{\sfY}(f_n(x),f(x')) \|_{L^p(\eta_n)} & = \| \sfd_{\sfY}(f_n(x_1),f(x_3)) \|_{L^p(\sigma_n)} \\
& \leq \| \sfd_{\sfY}(f_n(x_1),f(x_2)) \|_{L^p(\sigma_n)} + \| \sfd_{\sfY}(f(x_2),f(x_3)) \|_{L^p(\sigma_n)} \\
& = \| \sfd_{\sfY}(f_n(x),f(x')) \|_{L^p(\bar\eta_n)} + \| \sfd_{\sfY}(f(x_2),f(x_3)) \|_{L^p(\sigma_n)} 
\end{align*}
The first term goes to $0$ by assumption. As the singleton $\{ (\mu, f) \}$ is compact thus equidispersed by Theorem~\ref{thm:rel_cpct_TL}, it is enough to prove that $\| \sfd_{\sfX}(x_2,x_3) \|_{L^p(\sigma_n)}$ converges to $0$ to conclude that the second term converges to $0$. To that end we use the triangle inequality:
\begin{equation*}
\| \sfd_{\sfX}(x_2,x_3) \|_{L^p(\sigma_n)} \leq \| \sfd_{\sfX}(x_2,x_1) \|_{L^p(\sigma_n)} + \| \sfd_{\sfX}(x_1,x_3) \|_{L^p(\sigma_n)} = \| \sfd_{\sfX}(x,x') \|_{L^p(\bar \eta_n)} + \| \sfd_{\sfX}(x,x') \|_{L^p(\eta_n)},
\end{equation*}
and the right hand side converges to zero as both sequences are stagnating.

\uline{$3 \Rightarrow 2$}. It is immediate, because as $\mu_n \to \mu$ in $\cP_p(\sfX)$, there exists at least one stagnating sequence in $\Gamma(\mu_n,\mu)$: we take an optimal transport plan between $\mu_n$ and $\mu$. 
\end{proof}

\begin{remark}
With Proposition~\ref{prop:char_conv_TLp}, we recover more classical notions of convergence as particular cases of $\rmT\rmL_p(\sfX;\sfY)$ convergence: the latter can be interpreted as generalizing both $L^p$ and $\cP_p$ convergence. 

If $\mu \in \cP_p(\sfX)$ is fixed, then $(f_n)_{n \in \N}$ converges to $f \in L^p(\mu;\sfX;\sfY)$ if and only if $(\mu,f_n)$ converges to $(\mu,f)$ in $\rmT\rmL_p(\sfX;\sfY)$. It can be seen with Proposition~\ref{prop:char_conv_TLp}, taking $\eta_n = (\ii \times \ii)_\sharp \mu$, and was already proved in \cite[Lemma 5.4.1]{AGS}. 

On the other hand, if $f : \sfX \to \sfY$ is continuous and bounded, then $(\mu_n)_{n \in \N}$ converges to $\mu$ in $\cP_p(\sfX)$ if and only if $(\mu_n,f)$ converges to $(\mu,f)$ in $\rmT\rmL_p(\sfX;\sfY)$. The converse implication is immediate using Proposition~\ref{prop:char_conv_TLp}(3), and for the direct implication, if $\mu_n \to \mu$ in $\cP_p(\sfX)$, we use Proposition~\ref{prop:char_conv_TLp}(2) with $\eta_n$ an optimal transport plan between $\mu_n$ and $\mu$.
\end{remark}

\subsection{Graph convergence of continuous maps and convergence in $\rmT\rmL_p(\sfX;\sfY)$}

In this section we introduce the graph convergence of continuous maps with compact domains, and makes a link with $\rmT\rmL_p(\sfX;\sfY)$ convergence. Namely we prove a Lusin's style theorem for $\rmT\rmL_p(\sfX;\sfY)$ convergence, see Theorem~\ref{thm:Gconv_TLpconv} below. In addition to its intrinsic interest, it will be useful later in the proof of one of our core result (Corollary~\ref{crl:f_jointly_continuous}). Note how the criterion for compactness for graph convergence, which is a mere variation of Ascoli-Arzelà, bears some resemblance with equidispersivity for $\rmT\rmL_p(\sfX;\sfY)$.  

Let us quickly recall basic facts about the topology in the space 
	$\cK(\sfZ)$ of the compact 
	(nonempty) subsets of a metric space $(\sfZ,\sfd_\sfZ)$, we refer to \cite[Chapter 3]{aliprantis2006infinite} for standard results on this topology. 
$\cK(\sfZ)$ is a metric space with the Hausdorff metric:
\begin{equation*}
	\sfD_\sfZ(A,B):=\max\Big(
	\max_{a\in A}\sfd_\sfZ(a,B),
	\max_{b\in B}\sfd_\sfZ(b,A)\Big),
	\qquad
	\sfd_\sfZ(z,C):=\max_{c\in C}\sfd_Z(z,c).
\end{equation*}

A sequence $(K_n)_{n\in \N}$ converges to $K$ in $\sfD_\sfZ$ if and only if the following two conditions hold:
\begin{enumerate}
\item for every $z \in K$, we can find $(z_n)_{n \in \N}$ with $z_n \in K_n$ and $z_n$ converges to $z$;
\item for every sequence $(z_n)_{n \in \N}$ with $z_n \in K_n$, we can extract a subsequence converging to an element of $K$. 
\end{enumerate}

A collection 
$\cA\subset \cK(\sfZ)$ of compact sets  
is relatively compact if and only if 
the set $A:=\bigcup \cA$ is relatively compact
in $\sfZ$. 
In the case when $\cA$ is the image of a sequence
$(K_n)_{n\in \N}$, relative compactness is equivalent to say that 
every sequence $(z_n)_{n\in\N}$ with 
$z_n\in K_n$ for every $n\in \N$ has a convergent subsequence.

We can identify every map $f:\Dom f\to \sfY$  
with its graph
\begin{equation*}
\Graph f:=\Big\{(x,f(x)):x\in \Dom f\Big\}
\subset \sfX\times \sfY.   
\end{equation*}
If $\Dom f$ is compact, $f$ is continuous if and only if $\Graph{f}$ is compact as well,
so that the space of continuous maps 
with compact domain from $\sfX$ to $\sfY$ 
is isomorphic to the space of 
compact subsets $G$ of $\sfZ=\sfX\times \sfY$ 
which are concentrated on a graph, that is, which satisfy~\eqref{eq:char_graph}.
We say that $f$ is a \tCC-map, i.e.~a map with compact graph.

\begin{definition}%[\tCC-maps and graph convergence]
We denote by $\CCmaps \sfX\sfY$ 
the set of continuous maps $f:\Dom f\to \sfY$
with nonempty compact domain $\Dom f\subset \sfX$ 
(or equivalently, the set of maps with compact graph).

The topology in $\CCmaps\sfX\sfY$ is induced by the embedding of 
this space in $\cK(\sfX\times \sfY)$ given by
$f\mapsto \Graph{f}$. 
In particular, a sequence $f_n:\Dom{f_n}\to \sfY$, $\Dom{f_n}\in \cK(\sfX)$
of \tCC-maps 
is \emph{G}-converging to $f:\Dom f\to \sfY$
if $\Graph{f_n}$ is converging 
to $\Graph{f}$ in $\cK(\sfX\times \sfY)$.
\end{definition}

It is not difficult to check that 
$\rm{G}$-convergence is equivalent to the following two conditions:
\begin{enumerate}
\item $\Dom{f_n}\to\Dom f$ 
in $\cK(\sfX)$;
\item for every increasing sequence 
$k\mapsto n_k$ and
every sequence $x_{n_k}\in \Dom{f_{n_k}}$
converging to $x\in \Dom f$ we have
$f_{n_k}(x_{n_k})\to f(x)$
as $k\to +\infty$.
\end{enumerate}

We prove the analogue of Theorem~\ref{thm:rel_cpct_TL} and characterize relative compactness in $\CCmaps \sfX\sfY$. In this case, it is simpler and takes the form of a generalization of Ascoli-Arzelà. 

\begin{definition}
A subset $\cF\subseteq \CCmaps\sfX\sfY$ is called equicontinuous if: for every $\varepsilon > 0$, there exists $\delta > 0$ such that
\begin{equation*}
\forall f \in \cF, \quad \forall x,x' \in \Dom{f}, \qquad
\sfd_\sfX(x,x') < \delta \quad \Rightarrow \quad \sfd_\sfY(f(x),f(x')) < \varepsilon.
\end{equation*}
\end{definition}

This is equivalent to have a modulus of continuity $\omega_\cF$ such that, for $f \in \cF$, for $x,x' \in  \Dom{f}$ we have $\sfd_\sfY(f(x),f(x')) \le \omega_\cF(\sfd_\sfX(x,x'))$.

\begin{theorem}
A set $\cF\subset \CCmaps\sfX\sfY$ is relatively compact (with respect to {\rm G}-convergence in $\CCmaps\sfX\sfY$) if and only if $\bigcup_{f \in \cF} \Graph f$ is relatively compact in $\sfX \times \sfY$ and $\cF$ is equicontinuous. 
\end{theorem}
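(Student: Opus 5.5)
The plan is to argue through the embedding $f\mapsto \Graph f$ of $\CCmaps\sfX\sfY$ into $\cK(\sfX\times\sfY)$. Relative compactness of the collection of graphs in $\cK(\sfX\times\sfY)$ is equivalent to relative compactness of $A=\bigcup_{f\in\cF}\Graph f$ in $\sfX\times\sfY$. The space $\CCmaps\sfX\sfY$, identified with the set of compact sets satisfying \eqref{eq:char_graph}, is not closed in $\cK(\sfX\times\sfY)$. So the real content is the following: a Hausdorff limit of graphs from $\cF$ is again a graph if and only if $\cF$ is equicontinuous. This mirrors the proof of Theorem~\ref{thm:rel_cpct_TL}, with Lemma~\ref{lm:char_cpg} replaced by the pointwise characterization \eqref{eq:char_graph}.

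\emph{Sufficiency.} Assume $A$ is relatively compact and $\cF$ is equicontinuous with modulus $\omega_\cF$. Take a sequence $(f_n)$ in $\cF$. By compactness of $\cK(\overline A)$, a subsequence of $\Graph{f_n}$ converges in $\sfD_{\sfX\times\sfY}$ to some compact $G$. I need to show that $G$ satisfies \eqref{eq:char_graph}. Let $(x,y),(x,y')\in G$. By property 1 of Hausdorff convergence there are $(x_n,f_n(x_n))\to(x,y)$ and $(x_n',f_n(x_n'))\to(x,y')$ with $x_n,x_n'\in\Dom{f_n}$. Then
\[
\sfd_\sfY(f_n(x_n),f_n(x_n'))\le\omega_\cF(\sfd_\sfX(x_n,x_n'))\to0,
\]
so $y=y'$. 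Hence $G=\Graph f$ with $\Dom f=\pi^1(G)$ compact and nonempty, and $f$ is continuous because its graph is compact. This gives G-convergence of a subsequence to an element of $\CCmaps\sfX\sfY$.

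\emph{Necessity.} Assume $\cF$ is relatively compact. Every sequence of points $z_n\in\Graph{f_n}$ with $f_n\in\cF$ has a convergent subsequence: extract $f_{n_k}$ G-converging, then use property 2 of Hausdorff convergence. So $A$ is relatively compact.

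For equicontinuity I argue by contradiction, which I expect to be the main (though mild) obstacle. Suppose there are $\varepsilon_0>0$, $f_n\in\cF$ and $x_n,x_n'\in\Dom{f_n}$ with $\sfd_\sfX(x_n,x_n')\to0$ but $\sfd_\sfY(f_n(x_n),f_n(x_n'))\ge\varepsilon_0$. Extract a subsequence such that $f_n$ G-converges to $f\in\CCmaps\sfX\sfY$. Using the already established relative compactness of $A$, extract further so that $(x_n,f_n(x_n))\to(x,y)$ and $(x_n',f_n(x_n'))\to(x,y')$; both limits share the same first coordinate since $\sfd_\sfX(x_n,x_n')\to0$. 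By property 2 of Hausdorff convergence, both limits lie in $\Graph f$, while $\sfd_\sfY(y,y')\ge\varepsilon_0$. This contradicts \eqref{eq:char_graph}. The only care needed is to justify that limits of points on the graphs lie on the limit graph, which is exactly the second characterization of $\sfD_\sfZ$-convergence recalled above.
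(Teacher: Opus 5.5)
Your proof is correct and follows essentially the same route as the paper. Sufficiency extracts a Hausdorff-convergent subsequence of graphs and uses $\omega_\cF$ to show the limit satisfies \eqref{eq:char_graph}, and necessity argues by contradiction via a G-convergent subsequence whose limit graph would contain two points with the same first coordinate. The only cosmetic differences are that you obtain the convergent subsequences of $(x_n,f_n(x_n))$ from the relative compactness of $\bigcup_{f\in\cF}\Graph f$, and invoke the Hausdorff-convergence characterization directly, whereas the paper uses the equivalent two-condition description of G-convergence.
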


\begin{proof}
Assume $\cF$ is relatively compact in $\CCmaps\sfX\sfY$. Then $\{ \Graph f : f \in \cF \}$ is relatively compact in $\cK(\sfX \times \sfY)$, thus $\bigcup_{f \in \cF} \Graph f$ is relatively compact. Assume by contradiction that $\cF$ is not equicontinuous. There exists $\varepsilon_0 > 0$, a sequence $(\delta_n)_{n \in \N}$ converging to $0$, and $f_n$ in $\cF$, $x_n, x'_n \in \Dom{f_n}$ such that $\sfd_\sfX(x_n,x'_n) < \delta_n$ but $\sfd_\sfY(f_n(x_n),f_n(x'_n)) \ge \varepsilon_0$. Up to extraction $(f_n)_{n \in \N}$ $\rmG$-converges to a limit $f$, and also $x_n$ and $x'_n$ converge to limits $x$ and $x'$ as $n \to + \infty$. By $\rmG$-convergence, we necessarily have $x,x' \in \Dom f$ and $f_n(x_n)$, $f_n(x'_n)$ converge to $f(x)$ and $f(x')$. Passing to the limit we have $\sfd_\sfX(x,x') = 0$, but $\sfd_\sfY(f(x),f(x')) \geq \varepsilon_0 > 0$. This is a contradiction. 

Conversely, assume $\bigcup_{f \in \cF} \Graph f$ is relatively compact in $\sfX \times \sfY$ and $\cF$ is equicontinuous. Let $(f_n)_{n \in \N}$ a sequence in $\cF$. As $\bigcup_{n \in \N} \Graph{f_n}$ is relatively compact in $\cK(\sfX \times \sfY)$, we can extract from it a sequence converging to $G \in \cK(\sfX \times \sfY)$. We only need to show that $G$ is the graph of a function. If $(x,y)$ and $(x',y')$ are two points in $G$, then we can find sequences $(x_n,f_n(x_n))$ and $(x'_n,f_n(x'_n))$ in $\Graph{f_n}$ converging to $(x,y)$ and $(x',y')$ as $n \to + \infty$. Then
\begin{equation*}
\sfd_\sfY(y,y')  = \lim_{n \to + \infty} \sfd_\sfY(f(x_n),f(x'_n))  \le \lim_{n \to + \infty} \omega_\cF(\sfd_\sfX(x_n,x'_n))  
  = \omega_{\cF}(\sfd_\sfX(x,x')).
\end{equation*}
In particular if $x=x'$ then $y=y'$, thus $G$ is the graph of a function. It concludes the proof.
\end{proof}

Eventually we move the main result of this section: we want to relate convergence in $\rmT\rmL_p(\sfX; \sfY)$ to graph convergence. We prove an analogue of Lusin's theorem: if there is convergence in $\rmT\rmL_p(\sfX; \sfY)$, for any $\varepsilon > 0$, up to ignoring sets of measure less than $\varepsilon$, the convergence in $\rmT\rmL_p(\sfX; \sfY)$ can be upgraded to $\rm{G}$-convergence. In the sequel we write $f \restr{K}$ for the restriction of the function $f$ over a subset $K$.

\begin{theorem}
\label{thm:Gconv_TLpconv}
Assume that $\sfd_\sfX$ and $\sfd_\sfY$ are bounded and let $(\mu_n,f_n)_{n \in \N}$ and $(\mu,f)$ elements of $\rmT\rmL_p(\sfX;\sfY)$. 

Then $(\mu_n,f_n)_{n \in \N}$ converges to $(\mu,f)$ in $\rmT\rmL_p(\sfX;\sfY)$ if and only if: for every $\varepsilon > 0$, there exist measures $\mu_{\eps,n}$ and $\mu_\eps$ with compact support $K_{\varepsilon,n}$ and $K_\varepsilon$ such that
\begin{equation*}
\mu_{\eps,n}\le \mu_n,\quad
\mu_\eps\le \mu,\quad
\mu_{\eps,n}(\sfX)\ge 1-\eps,\quad 
\mu_{\eps,n}\to \mu_\eps 
\end{equation*}
and ${f_n} \restr{K_{\varepsilon,n}}$ is continuous and $\rm{G}$-converges to the continuous function $f\restr{K_\varepsilon}$ as $n \to + \infty$.  
\end{theorem}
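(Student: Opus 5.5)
Both directions work on the lifted measures $\gamma_n=\iota(\mu_n,f_n)$ and $\gamma=\iota(\mu,f)$ in $\cP_p(\sfX\times\sfY)$. Because $\sfd_\sfX,\sfd_\sfY$ are bounded, $W_p$ convergence is the same as narrow convergence, so there are no moment conditions to track.

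\emph{Sufficiency.} Fix $\varepsilon>0$ and set $\gamma_{\varepsilon,n}=(\ii\times f_n)_\sharp\mu_{\varepsilon,n}$ and $\gamma_\varepsilon=(\ii\times f)_\sharp\mu_\varepsilon$. These measures live on $\Graph{f_n\restr{K_{\varepsilon,n}}}$ and $\Graph{f\restr{K_\varepsilon}}$.
\begin{enumerate}
\item Extend each $f_n\restr{K_{\varepsilon,n}}$ continuously to all of $\sfX$. Lemma~\ref{lm:joint_continuity_fsharpmu} then gives $\gamma_{\varepsilon,n}\to\gamma_\varepsilon$ narrowly. Instead of extending, one can argue directly: by G-convergence, $\Graph{f_n\restr{K_{\varepsilon,n}}}\to\Graph{f\restr{K_\varepsilon}}$ in the Hausdorff sense. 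So for bounded uniformly continuous $\phi$ on $\sfX\times\sfY$, the function $x\mapsto\phi(x,f_n(x))$ on $K_{\varepsilon,n}$ is uniformly close to $\phi(x,\tilde f(x))$ for a fixed continuous extension $\tilde f$ of $f\restr{K_\varepsilon}$. Then $\mu_{\varepsilon,n}\to\mu_\varepsilon$ passes to the limit.
\item Since $\gamma_{\varepsilon,n}\le\gamma_n$, $\gamma_\varepsilon\le\gamma$, and $(\gamma_n-\gamma_{\varepsilon,n})(\sfX\times\sfY)\le\varepsilon$, we also have $(\gamma-\gamma_\varepsilon)(\sfX\times\sfY)\le\varepsilon$ in the limit. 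Hence for any bounded-Lipschitz test function $\phi$,
\[
\limsup_n\Big|\int\phi\,\d\gamma_n-\int\phi\,\d\gamma\Big|\le 2\varepsilon\|\phi\|_\infty .
\]
\item Letting $\varepsilon\to0$ gives $\gamma_n\to\gamma$, which is convergence in $\rmT\rmL_p$.
\end{enumerate}

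\emph{Necessity.} Assume $\gamma_n\to\gamma$. Apply Theorem~\ref{thm:skorokhod_Wp} on $\Omega=[0,1]$ to the $\gamma_n$. This gives random variables $(X_n,Y_n)\to(X,Y)$ almost surely, with laws $\gamma_n,\gamma$, $Y_n=f_n(X_n)$ and $Y=f(X)$ a.s.
\begin{enumerate}
\item By Egorov, choose a compact-ish set $A\subset\Omega$ with $\P(A)\ge1-\varepsilon/2$ on which the convergence is uniform.
\item By Lusin, shrink $A$ so that $f$ is continuous on a compact $K\subset\sfX$ with $X(A)\subset K$.
\item Likewise make each $f_n$ continuous on a compact $K_n\supset X_n(A_n)$. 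The difficulty is that the sets must be chosen simultaneously for all $n$.
\end{enumerate}
The candidates are $\mu_{\varepsilon,n}=(X_n)_\sharp(\P\mres A')$ and $\mu_\varepsilon=X_\sharp(\P\mres A')$ for a suitable $A'$. Then $\mu_{\varepsilon,n}\le\mu_n$ and $\mu_{\varepsilon,n}\to\mu_\varepsilon$ by uniform convergence. However, $X_n$ need not be injective, and the support of $\mu_{\varepsilon,n}$ is the closure of $X_n(A')$, so continuity of $f_n$ on that support is not automatic.

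\emph{Main obstacle.} The main obstacle is producing compact sets $K_{\varepsilon,n}$ on which $f_n$ is continuous and which still G-converge. The plan is to work directly on the graph side.
\begin{enumerate}
\item Let $G_n=\overline{\{(X_n,Y_n)(\omega):\omega\in A'\}}$ and $G=\overline{\{(X,Y)(\omega):\omega\in A'\}}$. Uniform convergence gives $G_n\to G$ in Hausdorff distance once all of them lie in a common compact set. This holds after intersecting $A'$ with the preimage of a compact set from tightness, together with uniform convergence.
\item $G$ is a graph because $f$ is continuous on $K$ and $A'\subset\{Y=f(X)\}$.
\item For $G_n$ we need a graph, which does not follow for free. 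First replace $A'$ by a subset where the points $(X_n,Y_n)$ lie on $\Graph{f_n\restr{C_n}}$ with $C_n$ compact and $f_n\restr{C_n}$ continuous. Lusin gives this for each $n$ separately, losing mass $\varepsilon 2^{-n-2}$ each time, so the total loss stays below $\varepsilon/2$.
\item Set $K_{\varepsilon,n}=\overline{X_n(A')}\subset C_n$ and $\mu_{\varepsilon,n}=(X_n)_\sharp(\P\mres A')$. By construction $f_n$ restricted to $K_{\varepsilon,n}$ is continuous, and its graph is $G_n$.
\item G-convergence follows from $G_n\to G$. The uniform-in-$n$ equicontinuity it implies is not needed, since the limit $G$ is already a graph.
\end{enumerate}
The delicate check is that restricting to $A'$ via Lusin at every $n$ keeps both $\P(A')\ge1-\varepsilon$ and the Hausdorff convergence $G_n\to G$. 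The latter is preserved because uniform convergence on $A'$ persists under passing to subsets.
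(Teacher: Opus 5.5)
Your plan is essentially correct, and it reaches both directions by a route somewhat different from the paper's, but one step in the sufficiency direction fails as written and needs a small repair.

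\textbf{Necessity: comparison with the paper.} The paper applies Lusin and Egorov \emph{on $\Omega$}. It finds a compact $S_\eps\subset\Omega$ with $\P(S_\eps)\ge 1-\eps$ on which $X_n,Y_n,X,Y$ are all continuous and $(X_n,Y_n)\to(X,Y)$ uniformly, and it sets $K_{\eps,n}=X_n(S_\eps)$. The graph of $f_n\restr{K_{\eps,n}}$ is then $(X_n,Y_n)(S_\eps)$, a continuous image of a compact set. So compactness, continuity of $f_n$ on $K_{\eps,n}$, and Hausdorff convergence of the graphs all come at once.

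You instead apply Lusin \emph{on $\sfX$} to each $f_n$ with respect to $\mu_n$, with losses $\eps 2^{-n-2}$. You pull back through $X_n^{-1}(C_n)$ and take closures. This also works:
\begin{itemize}
\item $G_n\subset\Graph{f_n\restr{C_n}}$, so $G_n$ is compact.
\item By continuity of $f_n$ on $C_n$, the closure of the graph over $X_n(A')$ is the graph over $\overline{X_n(A')}$.
\item If $\delta_n$ is the supremum over $A'$ of the pointwise distance between $(X_n,Y_n)$ and $(X,Y)$, then directly $\sfD(G_n,G)\le\delta_n$.
\end{itemize}
In particular, the tightness step in your first item is unnecessary, and on its own it would not make the $G_n$ compact. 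The paper's route is shorter; yours needs no topology on $\Omega$.

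\textbf{Sufficiency: comparison with the paper.} The paper uses Skorokhod for the normalized $\mu_{\eps,n}$, plus dominated convergence along the G-convergent graphs. You use instead a uniform approximation of $\phi(\cdot,f_n(\cdot))$ on $K_{\eps,n}$.

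\textbf{The step that fails.} Your uniform approximation relies on a continuous extension \emph{with values in $\sfY$}. This is either an extension of each $f_n\restr{K_{\eps,n}}$ (to invoke Lemma~\ref{lm:joint_continuity_fsharpmu}) or of $f\restr{K_\eps}$.
\begin{itemize}
\item For a general Polish $\sfY$ such extensions need not exist. Take $\sfX=[0,1]$, $\sfY=\{0,1\}$, $K_\eps=\{0,1\}$, $f(0)=0$, $f(1)=1$: no continuous extension exists because $[0,1]$ is connected.
\item In the first variant, arbitrary extensions would not satisfy the hypothesis of Lemma~\ref{lm:joint_continuity_fsharpmu} (uniform convergence on compacts) anyway.
\end{itemize}

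\textbf{The repair.} Extend instead the \emph{real-valued} function $x\mapsto\phi(x,f(x))$ from $K_\eps$ by Tietze, obtaining a bounded continuous $\tilde g$. Three facts then give $\sup_{K_{\eps,n}}|\phi(\cdot,f_n(\cdot))-\tilde g|\to 0$:
\begin{itemize}
\item Hausdorff convergence of the graphs;
\item uniform continuity of $\phi$;
\item continuity of $\tilde g$ uniformly at the compact set $K_\eps$.
\end{itemize}
Then $\mu_{\eps,n}\to\mu_\eps$ concludes. Alternatively, use the paper's Skorokhod argument, which needs no extension at all.

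\textbf{Two minor glosses, also present in the paper.}
\begin{itemize}
\item Egorov needs almost sure convergence of the \emph{whole} sequence. This comes from the classical Skorokhod theorem, not from the $L^p$ statement of Theorem~\ref{thm:skorokhod_Wp}.
\item $\supp\mu_{\eps,n}$ may be strictly smaller than $\overline{X_n(A')}$, so your claim of equality is false in general. Fix it by deleting from $A'$ the null set $\bigcup_n A'\cap X_n^{-1}(\sfX\setminus\supp\mu_{\eps,n})$, and its analogue for $X$.
\end{itemize}
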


\begin{proof}
We write $\gamma_n = \iota(\mu_n,f_n) = (\ii \times f_n)_\sharp \mu_n$ and $\gamma = \iota(\mu,f) = (\ii \times f)_\sharp \mu$. We also take $(\Omega,\frB,\P)$ a standard Borel space. 

\uline{Direct implication}. Convergence of $(\mu_n,f_n)$ to $(\mu,f)$ in $\rmT\rmL_p(\sfX;\sfY)$ means that $\gamma_n$ converges to $\gamma$ in $\cP_p(\sfX \times \sfY)$, thus by the Skorokhod embedding theorem (Theorem~\ref{thm:skorokhod_Wp}) we can find $(X_n,Y_n)$ random variables with $(X_n,Y_n)_\sharp \P = \gamma_n$ which converge to $(X,Y) \in L^p(\Omega;\sfX \times \sfY)$ with $(X,Y)_\sharp \P = \gamma$. In particular, $Y_n = f_n(X_n)$ and $Y= f(X)$ almost surely.

Fix $\varepsilon > 0$. By a combination of Lusin and Egoroff's theorem, we can find a compact $S_\eps \subset \Omega$ with $\P(S_\varepsilon) \geq 1 - \varepsilon$ such that $X_n, Y_n, X,Y$ are continuous over $S_\varepsilon$ and the convergence of $(X_n,Y_n)$ to $(X,Y)$ is uniform over $S_\eps$. We define $\mu_{\varepsilon,n} = {X_n}_\sharp( \P \mres S_\eps)$ and $\mu_\varepsilon = X_\sharp (\P \mres S_\eps)$, and naturally $K_{\eps,n} = X_n(S_\eps)$ and $K_\eps = X(S_\eps)$. It gives directly $\mu_{\eps,n}\le \mu_n$, $\mu_{\eps}\le \mu$, and the convergence $\mu_{\eps,n}\to \mu_\eps$. 

Moreover, the graph of $f_n \restr{K_{\eps,n}}$ is nothing else than $(X_n,Y_n)(S_\eps)$, which is compact, while the graph of $f \restr{K_\eps}$ is the compact set $(X,Y)(S_\eps)$. As $(X_n,Y_n)$ converges uniformly to $(X,Y)$ over $S_\eps$, it is easy to check that $(X_n,Y_n)(S_\eps)$ converges in $\cK(\sfX \times \sfY)$ to $(X,Y)(S_\eps)$.  

\uline{Converse implication}. Let $\phi : \sfX \times \sfY \to \R$ a continuous and bounded test function, we call $M$ an upper bound of $|\phi|$. For $\eps > 0$ we have 
\begin{equation*}
\int_{\sfX \times \sfY} \phi(x,y) \, \d \gamma_n(x,y) = \int_{\sfX } \phi(x,f(x)) \, \d \mu_n(x) \leq M \eps + \int_{\sfX} \phi(x,f_n(x)) \, \d \mu_{\eps,n}(x).
\end{equation*}
Again by the Skorokhod embedding theorem (Theorem~\ref{thm:skorokhod_Wp}) there exists a sequence $(X_{\eps,n})_{n \in \N}$ valued in $L^p(\Omega;\sfX)$ with ${X_{\eps,n}}_\sharp ( \mu_{\eps,n}(\sfX) \P) = \mu_{\eps,n}$ which converges to $X_\eps \in L^p(\Omega;\sfX)$ with ${X_{\eps}}_\sharp (\mu_{\eps}(\sfX) \P) = \mu_{\eps}$. As $X_{\eps,n}(\omega) \in K_{\eps,n}$ for all $n$ for a.e. $\omega$, and thanks to continuity of $\phi$ and $\rmG$-convergence, $\phi(X_{\eps,n}(\omega),f_n(X_{\eps,n}(\omega)))$ converges to $\phi(X_\eps(\omega),f(X_\eps(\omega)))$ for a.e. $\omega$. Thus by dominated convergence
\begin{align*}
\lim_{n\to + \infty} \int_{\sfX} \phi(x,f_n(x)) \, \d \mu_{\eps,n}(x)  & = \lim_{n \to + \infty} \mu_{\eps,n}(\sfX) \int_{\Omega} \phi(X_{\eps,n}(\omega),f_n(X_{\eps,n}(\omega))) \, \d \P(\omega) \\
& = \mu_{\eps}(\sfX)\int_{\Omega} \phi(X_{\eps}(\omega),f(X_{\eps}(\omega))) \, \d \P(\omega) \\
& = \int_{\sfX} \phi(x,f(x)) \, \d \mu_{\eps}(x) \leq M \eps +  \int_{\sfX} \phi(x,f(x)) \, \d \mu(x). 
\end{align*}
Putting pieces together we obtain 
\begin{equation*}
\limsup_{n \to + \infty} \int_{\sfX \times \sfY} \phi(x,y) \, \d \gamma_n(x,y) \leq 2M \eps + \int_{\sfX} \phi(x,f(x)) \, \d \mu(x). 
\end{equation*}
Taking the limit $\eps \to 0$, and then permuting the role of $\phi$ and $-\phi$, we conclude that $\int \phi \d \gamma_n$ converges to $\int \phi \d \gamma$. The conclusion follows as $\phi$ is an arbitrary bounded continuous function.
\end{proof}

\section{Generic discrete measures}
\label{sec:generic_measures}

We turn to the second set of preliminary results for our study: we define the class of generic measures over which the transport representative can be identified unequivocally. 

For an integer $m$ we write $\Delta_m$ for the $(m-1)$ dimensional simplex, that is $\Delta_m$ is the set of $\aa = (a_i)_{1 \leq i \leq m}$ such that $a_i \geq 0$ for all $i =1,\ldots,m$ and $\sum_{i=1}^m a_i = 1$. We call $\mu$ an empirical distribution if $\mu = \frac{1}{m} \sum_{i=1}^{m}  \delta_{x_i}$ for some $(x_i)_{1 \le i \le m}$ not necessarily distinct, equivalently if $\mu = \sum_{i=1}^{m'} a_i \delta_{x'_i}$ where $(a_i)_{1 \le i \le m'} \in \Delta_{m'}$ and all $a_i$ are rational numbers.

\begin{definition}
\label{def:generic_weights}
An element $\aa \in \Delta_m$ is called generic if, for any $I_1, I_2$ subsets of $\{1, \ldots,m \}$,
\begin{equation*}
\sum_{i \in I_1} a_i = \sum_{i \in I_2} a_i \quad \Rightarrow \quad I_1 = I_2. 
\end{equation*}
A probability measure $\mu \in \cP(\sfX)$ is called generic if $\mu = \sum_{i=1}^m a_i \delta_{x_i}$ for $\aa \in \Delta_m$ generic and $(x_i)_{1 \leq i \leq m} \in \sfX^m$.
\end{definition}

Necessarily if $\aa$ is generic then $a_i > 0$ for all $i$. The uniform distribution $(1/m, \ldots, 1/m)$ is not generic for $m \geq 2$: it is quite the opposite as necessarily if $\aa$ is generic then all weights $a_i$ are distinct. 

% For instance, the uniform distribution $(1/m, \ldots, 1/m)$ over $\Delta_m$ is not generic while $(1/7,2/7,4/7) \in \Delta_3$ is.

\begin{proposition}
\label{prop:super_generic_dense}
For any $m \geq 1$,
the set of generic weights is open and dense in $\Delta_m$. In particular, it is dense in $\Delta_m \cap \Q^m_+$.
\end{proposition}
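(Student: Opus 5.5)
The plan is to write the set of non-generic weights as a finite union of closed, nowhere dense subsets of $\Delta_m$ and conclude by Baire (here even elementary) reasoning. For each pair of distinct subsets $I_1 \neq I_2$ of $\{1,\ldots,m\}$ consider the linear functional
\begin{equation*}
\ell_{I_1,I_2}(\aa) = \sum_{i \in I_1} a_i - \sum_{i \in I_2} a_i = \sum_{i=1}^m c_i a_i, \qquad c_i = \mathbf{1}_{I_1}(i) - \mathbf{1}_{I_2}(i) \in \{-1,0,1\}.
\end{equation*}
By Definition~\ref{def:generic_weights}, $\aa \in \Delta_m$ is non-generic if and only if $\aa \in Z_{I_1,I_2} := \{ \aa \in \Delta_m : \ell_{I_1,I_2}(\aa) = 0 \}$ for some $I_1 \neq I_2$. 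There are finitely many such pairs, and each $Z_{I_1,I_2}$ is closed, as the zero set of a continuous function. A finite union of closed sets is closed, so the generic weights form an open subset of $\Delta_m$.

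For density it suffices to show that each $Z_{I_1,I_2}$ has empty interior relative to $\Delta_m$, since a finite union of closed sets with empty interior has empty interior. The main point to check is that $\ell_{I_1,I_2}$ is not constant on $\Delta_m$, i.e.\ that the hyperplane $\{\ell = 0\}$ does not contain the affine hull $\{\sum a_i = 1\}$ of $\Delta_m$. Since $I_1 \neq I_2$, the vector $c$ is nonzero and takes values in $\{-1,0,1\}$, so it is not a multiple of $(1,\ldots,1)$ unless $c = \pm(1,\ldots,1)$, i.e.\ $\{I_1,I_2\} = \{\emptyset, \{1,\ldots,m\}\}$. In that case $\ell = \pm 1$ on $\Delta_m$, so $Z_{I_1,I_2} = \emptyset$. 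Otherwise $\ell$ restricted to the affine hull is a nonconstant affine function, and its zero set meets the relatively open set $\{a_i>0 \ \forall i\}$ in a relative hyperplane, which has empty interior. Since $\Delta_m$ is the closure of its relative interior, $Z_{I_1,I_2}$ has empty interior in $\Delta_m$. For $m=1$ the only weight is $(1)$, which is generic, so the statement is trivial there.

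For the final claim, let $\aa \in \Delta_m \cap \Q^m_+$ and $\varepsilon>0$. By density, generic weights exist arbitrarily close to $\aa$, which already gives density in the closure sense. If the intended meaning is that generic rational weights are dense in $\Delta_m \cap \Q^m_+$, combine openness and density. Openness and density give a generic $\bb$ with $|\bb - \aa| < \varepsilon/2$, together with a relative neighborhood of $\bb$ consisting of generic weights. Rational points are dense in $\Delta_m$: perturb coordinates rationally and renormalize, or note that the affine hull is defined over $\Q$. Hence this neighborhood contains a rational generic $\bb'$ with $|\bb' - \aa| < \varepsilon$.

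The only slightly delicate step is this last one, choosing generic weights that are also rational. It is handled by openness of the generic set together with density of $\Q^m \cap \Delta_m$ in $\Delta_m$.
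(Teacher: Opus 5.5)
Your proof is correct. The openness part is the same as the paper's: the non-generic set is the finite union of the closed sets $\{\sum_{I_1} a_i = \sum_{I_2} a_i\}$.

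Your density argument takes a different route. The paper argues as follows: if $\bb \in \R_+^m$ has $\Q$-linearly independent entries, then $\bb/\sum_i b_i$ is generic, because a relation $\sum_i c_i b_i = 0$ with $c_i \in \{-1,0,1\}$ forces $c=0$. It then invokes, without proof, the density of $\Q$-linearly independent tuples in $\R_+^m$, which is itself a countable-union statement (Baire or measure) over the hyperplanes $\{q\cdot b=0\}$, $q\in\Q^m\setminus\{0\}$. You instead show directly that each of the finitely many sets $Z_{I_1,I_2}$ is a nowhere dense hyperplane section of $\Delta_m$. You also correctly isolate the only pair, $\{\emptyset,\{1,\dots,m\}\}$, for which the functional is constant on the affine hull; there $Z_{I_1,I_2}=\emptyset$.

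Your route is more elementary and self-contained: it uses finitely many sets, needs no appeal to rational independence, and describes the non-generic set exactly. The paper's route gives a clean intrinsic sufficient condition for genericity.

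Note also that the paper's construction never yields rational weights when $m\ge 2$, since a normalized $\Q$-independent vector has at most one rational entry. So the ``in particular'' clause really does need the openness-plus-density-of-$\Q^m\cap\Delta_m$ argument you spell out, which the paper leaves implicit. This version (density of rational generic weights) is exactly what is used later in Corollary~\ref{crl:dense_supergeneric} and Proposition~\ref{prop:estimate_discrete}.
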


\begin{proof}
The set of generic weights is the intersection of $ \left\{ \aa \in \Delta_m \ : \ \sum_{i \in I_1} a_i \neq \sum_{i \in I_2} a_i \right\}$ where $I_1, I_2$ are taken to be any pair of distinct subsets of $\{1, \ldots, m \}$, thus is open as a finite intersection of open sets. 

For denseness, if $\bb$ is any collection of $m$ non-negative elements which is $\Q$-linearly independent, then $(b_1/s, \ldots, b_m /s)$ with $s = \sum_{i=1}^m b_i$ belongs to $\Delta_m$ and is generic. The conclusion follows as the set of $\Q$-linearly independent collection of $m$ numbers is dense in $\R_+^m$.
\end{proof}

\begin{corollary}
\label{crl:dense_supergeneric}
The set of empirical generic measures is dense in $\cP_p(\sfX)$.
\end{corollary}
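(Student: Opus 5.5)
The plan is to approximate an arbitrary $\mu \in \cP_p(\sfX)$ in two steps. First we approximate it by empirical measures, whose density in $\cP_p(\sfX)$ is standard. Then we perturb the rational weights of each empirical measure into generic rational weights, using Proposition~\ref{prop:super_generic_dense}.

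\emph{Step 1.} Since $\sfX$ is separable, finitely supported measures with rational weights are dense in $\cP_p(\sfX)$. Indeed, fix a countable dense set $\{z_k\}$ and $\varepsilon>0$. Choose a radius $R$ so that the mass of $\mu$ outside the ball $B(z_0,R)$, together with the $p$-moment tail beyond $R$, is less than $\varepsilon$. Partition $\sfX$ into Borel cells of diameter less than $\varepsilon$, built from the Voronoi-type sets $\{x : \sfd_\sfX(x,z_k) < \varepsilon/2\}$ minus earlier ones, and send each cell to its center. Only finitely many cells carry mass up to $\varepsilon$, so we send the remainder to $z_0$. This gives a discrete $\tilde\mu$ with $W_p(\mu,\tilde\mu)$ small. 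Finally, rounding the finitely many weights to rationals changes $W_p$ by an amount controlled by the total variation times $\max_{i,j}\sfd_\sfX(x_i,x_j)^p$, which can be made arbitrarily small.

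\emph{Step 2.} Given an empirical measure $\nu = \sum_{i=1}^m a_i\delta_{x_i}$ with distinct $x_i$ and $\aa \in \Delta_m\cap\Q^m_+$ (all $a_i>0$), Proposition~\ref{prop:super_generic_dense} yields generic rational weights $\bb\in\Delta_m\cap\Q^m$ arbitrarily close to $\aa$. We need $\bb$ to be rational itself, not merely a limit of rationals. Since the generic set is open and $\Q^m\cap\Delta_m$ is dense in $\Delta_m$, every open neighbourhood of $\aa$ meets the generic set in a nonempty open set, which then contains rational points. Set $\nu' = \sum_i b_i\delta_{x_i}$. Both $\nu$ and $\nu'$ live on the same finite set, so we can couple them by keeping mass $\min(a_i,b_i)$ at each $x_i$ and moving the rest arbitrarily. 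This gives
\begin{equation*}
W_p(\nu,\nu')^p \le \Big(\max_{i,j}\sfd_\sfX(x_i,x_j)^p\Big)\sum_i |a_i-b_i|,
\end{equation*}
which tends to $0$ as $\bb\to\aa$.

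Combining the two steps with the triangle inequality proves the corollary. The only delicate point is the rationality in Step 2, which the openness part of Proposition~\ref{prop:super_generic_dense} handles. Step 1 is routine; equivalently, one can cite the fact that empirical measures of i.i.d. samples converge almost surely in $W_p$.
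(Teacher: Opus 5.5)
Your proposal is correct and follows the paper's proof: first density of empirical measures in $\cP_p(\sfX)$ (which the paper simply cites from the literature), then perturbation of the rational weights into generic rational weights via Proposition~\ref{prop:super_generic_dense}. Two things the paper leaves implicit are spelled out in your version: the openness of the generic set combined with density of $\Q^m\cap\Delta_m$, which is what guarantees the generic weights can be taken rational, and the elementary $W_p$ bound under a change of weights on a fixed finite support.
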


\begin{proof}
The set of empirical measures is dense in $\cP_p(\sfX)$, see e.g.~\cite[Theorem 6.18]{villani2009optimal}.  
By Proposition~\ref{prop:super_generic_dense}, any empirical measure can be approximated by a generic measure with weights which are rational. The conclusion follows.
\end{proof}

The next result explains the main interest of generic measures: the location of the atoms can be identified from the $\mu$, as the weights $\aa$ encode the ``ordering'' of the atoms. 

\begin{proposition}
\label{prop:super_generic_injective}
Fix $\aa \in \Delta_m$ generic. Then the map 
\begin{equation*}
\frG_{\aa} : (x_1, \ldots, x_m) \mapsto \sum_{i=1}^m a_i \delta_{x_i}    
\end{equation*}
is continuous and injective from $\sfX^m$ into $\cP(\sfX)$. Its inverse $\frG_\aa^{-1}$, defined over $\frG_\aa(\sfX^m) \subset \cP_p(\sfX)$ and valued into $\sfX^m$, is continuous. 
\end{proposition}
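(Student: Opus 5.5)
Continuity of $\frG_\aa$ is immediate: if $x^n_i \to x_i$ for each $i$, then for any bounded continuous $\phi$ we have $\int \phi \, \d \frG_\aa(\xx^n) = \sum_i a_i \phi(x^n_i) \to \sum_i a_i \phi(x_i)$. The same holds in $W_p$, since the coupling $\sum_i a_i \delta_{(x^n_i,x_i)}$ gives $W_p(\frG_\aa(\xx^n),\frG_\aa(\xx))^p \le \sum_i a_i \sfd_\sfX(x^n_i,x_i)^p$. The real content is therefore injectivity and continuity of the inverse. Both rest on a single observation: genericity lets us read off, from the mass carried by each point, exactly which indices have collapsed onto that point.

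For injectivity, let $\mu = \frG_\aa(\xx)$. For each $z \in \supp(\mu)$ set $I(z) = \{ i : x_i = z \}$, so that $\mu(\{z\}) = \sum_{i \in I(z)} a_i$. By genericity (Definition~\ref{def:generic_weights}), the value $\mu(\{z\})$ determines the set $I(z)$ uniquely. Hence $x_i$ is recovered as the unique atom $z$ of $\mu$ with $i \in I(z)$. If $\frG_\aa(\xx) = \frG_\aa(\xx')$, then the atoms and their masses coincide, so the index sets coincide and $\xx = \xx'$.

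For continuity of the inverse, I argue by sequences. Assume $\mu_n = \frG_\aa(\xx^n) \to \mu = \frG_\aa(\xx)$ narrowly (which suffices, since $W_p$ convergence implies it). I aim to show $x^n_i \to x_i$ for every $i$.
\begin{itemize}
\item Let $z_1,\dots,z_k$ be the distinct atoms of $\mu$ and choose $r>0$ with the closed balls $\bar B(z_j,2r)$ pairwise disjoint.
\item Narrow convergence with Portmanteau, applied to the open ball $B(z_j,r)$ and the closed ball $\bar B(z_j,r)$, gives $\liminf_n \mu_n(B(z_j,r)) \ge \mu(\{z_j\})$. Since $\mu$ has no mass outside $\bigcup_j \{z_j\}$, it also gives $\limsup_n \mu_n(\sfX \setminus \bigcup_j B(z_j,r)) = 0$.
\item Each $\mu_n(B(z_j,r))$ is a subset sum $\sum_{i \in J_n(j)} a_i$, where $J_n(j) = \{ i : x^n_i \in B(z_j,r)\}$. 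There are finitely many subset sums and they are all distinct. With $\delta$ the minimal gap between distinct subset sums, once the masses are within $\delta/2$ of their limits, equality is forced: eventually $\sum_{i\in J_n(j)}a_i = \mu(\{z_j\}) = \sum_{i\in I(z_j)} a_i$. Genericity then gives $J_n(j) = I(z_j)$.
\item The sets $I(z_j)$ partition $\{1,\dots,m\}$. So for $n$ large, each $x^n_i$ lies in $B(z_j,r)$ with $z_j = x_i$.
\end{itemize}
Since $r$ can be taken arbitrarily small, this gives $x^n_i \to x_i$.

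The main obstacle is the third step: converting approximate mass convergence into exact equality of index sets. This uses the fact that only finitely many subset sums occur, together with the genericity implication $\sum_{I_1} = \sum_{I_2} \Rightarrow I_1 = I_2$. The mass escaping the balls tends to $0$, and the masses of the balls converge to the atom masses. Both limits follow from Portmanteau, applied to the open set $\bigcup_j B(z_j,r)$ and to each closed ball $\bar B(z_j,r)$.
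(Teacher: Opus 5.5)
Your proof is correct. For continuity of $\frG_\aa$ and for injectivity it is the paper's argument: the mass $\mu(\{z\})$ determines the index set $\{i : x_i = z\}$ by genericity.

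For continuity of $\frG_\aa^{-1}$ you take a different route. The paper argues abstractly. A compact family in $\cP(\sfX)$ is tight, and tightness forces every atom $x_i$, which carries mass at least $\min_i a_i > 0$, into a fixed compact set, so $\frG_\aa$ is proper. A continuous, injective, proper map is then a homeomorphism onto its image. In that argument genericity is used only for injectivity; properness needs nothing beyond positivity of the weights.

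Your argument is instead a stable, sequential version of the injectivity proof. Portmanteau localizes the masses of $\mu_n$ in small balls around the atoms of $\mu$. The positive gap between the finitely many distinct subset sums then turns approximate equality of masses into exact equality of index sets, $J_n(j) = I(z_j)$ for $n$ large. This buys a self-contained proof that avoids both the proper-map lemma and the appeal to tightness (Prokhorov). It also gives more explicit information: eventually the atoms of $\mu_n$ cluster around those of $\mu$ with exactly the same index partition, which is the kind of structure exploited later in Proposition~\ref{proposition_local_isometry}. The cost is a second use of genericity and a longer argument.

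A minor remark: your estimate on the mass escaping $\bigcup_j B(z_j,r)$ is not needed. Since the sets $I(z_j)$ partition $\{1,\dots,m\}$, the identities $J_n(j) = I(z_j)$ already place every $x^n_i$ in $B(x_i,r)$.
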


We leave to the reader to check that, if the set $\sfX$ is infinite, injectivity of $\frG_\aa$ is a characterization of the genericity of $\aa$. 

\begin{proof}
Continuity of $\frG_\aa$ is clear, see also~\eqref{eq:estimate_trivial} below. Writing $\xx = (x_1, \ldots, x_m)$ and $\xx' = (x'_1, \ldots, x'_m)$, assume $\mu = \frG_\aa(\xx) = \frG_\aa(\xx')$ and let $x \in \sfX$. Let $I$ (resp. $I'$) the set of $i$ with $x_i = x$ (resp. $x'_i = x$). Then $\sum_{i \in I} a_i = \sum_{i \in I'} a_i = \mu(\{ x \})$ and thus $I=I'$. As $x$ is arbitrary, we get $\xx = \xx'$.

Moreover $\frG_\aa$ is proper, meaning that the inverse image of a compact set is compact: this can easily be seen from tightness, which characterize relatively compact sets of $\cP(\sfX)$. Thus $\frG_\aa^{-1}$ is continuous on $\frG_\aa(\sfX^m)$, as inverse of a continuous injective proper map. 
\end{proof}

\begin{corollary}
\label{crl:atmost_one_f_sg}
Fix $\mu \in \cP(\sfX)$ generic and $f, f' : \sfX \to \sfY$. Then if $f_\sharp \mu = f'_\sharp \mu$ we have $f = f'$ on $\supp(\mu)$. 
\end{corollary}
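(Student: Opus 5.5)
Write $\mu = \sum_{i=1}^m a_i \delta_{x_i}$ with $\aa \in \Delta_m$ generic. The plan is to reduce to the injectivity of $\frG_\aa$ from Proposition~\ref{prop:super_generic_injective}, applied in the target space $\sfY$. Since genericity forces $a_i > 0$ for every $i$, we have $\supp(\mu) = \{x_1,\ldots,x_m\}$. It therefore suffices to show $f(x_i) = f'(x_i)$ for each $i$.

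First we need the points $x_i$ to be distinct. If $x_i = x_j$ for some $i \neq j$, the definition of genericity is still stated only in terms of the weights, so nothing breaks. Still, it is cleaner to merge coincident atoms: set $\mu = \sum_k b_k \delta_{z_k}$ with distinct $z_k$ and $b_k = \sum_{i : x_i = z_k} a_i$. The merged vector $\bb$ is again generic. Indeed, an equality $\sum_{k\in K_1} b_k = \sum_{k \in K_2} b_k$ pulls back to an equality between sums of $a_i$ over the unions of the corresponding index blocks. Genericity of $\aa$ makes these unions equal, and since the blocks are disjoint this gives $K_1 = K_2$. Alternatively, we can skip merging and argue directly on $\aa$, because the argument below never uses distinctness of the $x_i$.

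Next, we have $f_\sharp \mu = \sum_i a_i \delta_{f(x_i)} = \frG_\aa(f(x_1),\ldots,f(x_m))$, computed in $\cP(\sfY)$, and likewise $f'_\sharp\mu = \frG_\aa(f'(x_1),\ldots,f'(x_m))$. Proposition~\ref{prop:super_generic_injective} applies with $\sfY$ in place of $\sfX$, since its injectivity proof uses only genericity of $\aa$ and works in any space. Injectivity of $\frG_\aa$ then gives $f(x_i) = f'(x_i)$ for all $i$. Equivalently, one can repeat the argument directly: for $y \in \sfY$, let $I = \{i : f(x_i) = y\}$ and $I' = \{i : f'(x_i) = y\}$. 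Then $\sum_{i\in I} a_i = f_\sharp\mu(\{y\}) = f'_\sharp\mu(\{y\}) = \sum_{i \in I'} a_i$, so $I = I'$, and this forces $f = f'$ at every $x_i$.

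No real obstacle is expected. The only points to check are that the support of $\mu$ is exactly the set of atoms, which follows from positivity of the weights, and that Proposition~\ref{prop:super_generic_injective} transfers to $\sfY$. No measurability of $f,f'$ is needed, because only atoms are involved.
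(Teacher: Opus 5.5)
Your proposal is correct and follows the paper's proof. You write $f_\sharp\mu = \frG_\aa(f(x_1),\ldots,f(x_m))$ and $f'_\sharp\mu = \frG_\aa(f'(x_1),\ldots,f'(x_m))$ and conclude by injectivity of $\frG_\aa$ (Proposition~\ref{prop:super_generic_injective}, applied in $\sfY$); the merging of coincident atoms is harmless but unnecessary, as you yourself observe.
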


\begin{proof}
Write $\mu = \sum_{i=1}^m a_i \delta_{x_i}$, that is, $\mu = \frG_\aa(x_1, \ldots, x_m)$ with $\aa \in \Delta_m$ generic. Then $f_\sharp \mu = \frG_\aa(f(x_1), \ldots, f(x_m))$. If $f_\sharp \mu = f'_\sharp \mu$ as $\frG_\aa$ is injective (Proposition~\ref{prop:super_generic_injective}) we have $(f(x_1), \ldots, f(x_m)) = (f'(x_1), \ldots, f'(x_m))$, which exactly means $f = f'$ on $\supp(\mu)$.   
\end{proof}

We also prove a quantitative strengthening of Proposition \ref{prop:super_generic_injective} which will be key to prove our main technical estimate, namely Proposition~\ref{prop:estimate_discrete}. We endow $\sfX^m$ with the $\ell_p$ distance weighted by $\aa$: 
\begin{equation*}
\sfd_{\aa,\sfX^m}(\xx,\xx') = \left( \sum_{i=1}^m a_i \sfd_{\sfX}(x_i,x'_i)^p \right)^{1/p}.   
\end{equation*}
The trivial coupling always yields, if $\mu = \frG_\aa(\xx)$ and $\mu' = \frG_\aa(\xx')$, that
\begin{equation}
\label{eq:estimate_trivial}
W_p(\mu,\mu') \leq \sfd_{\aa,\sfX^m}(\xx,\xx').  
\end{equation}
We prove a refined estimate about the equality case.

\begin{proposition}
\label{proposition_local_isometry}
Fix $\aa \in \Delta_m$ generic. For $\mu = \frG_\aa(\xx)$ with $\xx \in \sfX^m$, there exists $\varepsilon > 0$ such that, for any $\mu' = \frG_\aa(\xx')$, 
\begin{equation}
\label{eq:local_isom}
\min( \sfd_{\aa,\sfX^m}(\xx,\xx'), W_p(\mu,\mu') ) \leq \varepsilon \quad  \Rightarrow \quad W_p(\mu,\mu') = \sfd_{\aa,\sfX^m}(\xx,\xx').
\end{equation}
\end{proposition}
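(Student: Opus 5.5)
The strategy is to show that for $\xx'$ close to $\xx$ (in either sense) the identity coupling $\sum_i a_i \delta_{(x_i,x'_i)}$ is optimal. We do this by proving that any other coupling is forced to move a definite amount of mass a definite distance. Since $W_p(\mu,\mu') \le \sfd_{\aa,\sfX^m}(\xx,\xx')$ always holds by \eqref{eq:estimate_trivial}, smallness of $\sfd_{\aa,\sfX^m}(\xx,\xx')$ implies smallness of $W_p(\mu,\mu')$. It therefore suffices to treat the case $W_p(\mu,\mu') \le \varepsilon$.

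\textbf{Step 1: fix the constants.} Let $z_1,\dots,z_k$ be the distinct points among $x_1,\dots,x_m$, with index classes $J_1,\dots,J_k$. Let $r>0$ be smaller than half the minimal distance between distinct $z_j$'s. Let
\[
\beta := \min\Big\{ \big|\textstyle\sum_{i\in I_1} a_i - \sum_{i\in I_2} a_i\big| : I_1 \neq I_2 \Big\},
\]
which is positive by genericity of $\aa$.

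\textbf{Step 2: $x'_i$ lies near $x_i$.} Fix an optimal plan $\eta\in\Gamma(\mu,\mu')$. Fix $j$, put $B_j = B(z_j,r)$, and set $I'_j = \{i : x'_i \in B_j\}$. By Markov's inequality, the mass that $\eta$ moves out of $\{z_j\}$ to outside $B_j$ is at most $W_p^p/r^p$. The same bound holds for the mass entering $B_j$ from outside $\{z_j\}$; the only other source points are other $z_l$'s, which lie at distance $\ge r$ from $B_j$. Hence
\[
\Big|\textstyle\sum_{i\in J_j} a_i - \sum_{i\in I'_j} a_i\Big| \le 2W_p^p/r^p .
\]
Choosing $\varepsilon$ with $2\varepsilon^p/r^p < \beta$ forces $I'_j = J_j$. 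Thus $x'_i \in B(x_i,r)$ for every $i$, and no other $x'_l$ lies in $B(x_i,r)$.

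\textbf{Step 3: $\eta$ is the identity coupling.} The atoms of $\mu'$ are the distinct values among the $x'_i$. Let $S$ be the index set of such an atom, and suppose $\eta$ sends mass from $z_j$ to a point $w$ with $w$ at distance $\ge r$ from $z_j$. We want to show that optimality then forces the cost to exceed a bound we can make small. A cruder route is to shrink $\varepsilon$ so that $\varepsilon^p < \beta r^p$; this ensures every block of mass is below the genericity gap and must be exactly matched. Then $\eta$ restricted to $\{z_j\}\times B_j$ carries all of the mass $\sum_{J_j} a_i$, because the mass leaked elsewhere is less than $\beta$ yet must be a difference of subset sums, hence zero. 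Inside $B_j$ the target is $\sum_{i\in J_j} a_i\delta_{x'_i}$ and the source is a single point $z_j = x_i$. The coupling is therefore unique, and its cost is $\sum_{i\in J_j} a_i\,\sfd(x_i,x'_i)^p$. Summing over $j$ gives $W_p^p = \sfd_{\aa,\sfX^m}^p$.

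\textbf{Main obstacle.} The delicate point is the exact-matching argument in Step 3. Leaked masses are not a priori differences of subset sums of $\aa$, since $\eta$ can split atoms. The fix is a mass balance: the mass of $\mu$ at $z_j$ that stays in $B_j$ must equal the $\mu'$-mass of $B_j$ minus the mass imported from outside. Both quantities are already pinned down by Step 2, which identified $I'_j = J_j$, and the imported mass is zero because everything leaving $z_l$ for $l\ne j$ was shown to stay in $B_l$. I would run Step 2 as an iterated bookkeeping over all $j$ simultaneously to close this.
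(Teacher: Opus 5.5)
Your reduction to the case $W_p(\mu,\mu')\le\varepsilon$ (via $W_p\le\sfd_{\aa,\sfX^m}$) is correct, and so is your Step~2. The Markov bound together with the genericity gap $\beta$ does force $I'_j=J_j$, hence $\sfd_\sfX(x_i,x'_i)<r$ for every $i$; this is a quantitative substitute for the continuity of $\frG_\aa^{-1}$ that the paper uses.

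The gap is in Step~3: the claim that the optimal plan $\eta$ moves no mass from $z_j$ into $B_l$ with $l\neq j$. Your first justification (leaked masses are differences of subset sums) fails for a general optimal plan, as you note yourself. Your proposed fix is circular. Step~2 only gives the \emph{net} balance $\mu(\{z_j\})=\mu'(B_j)$; it does not show that ``everything leaving $z_l$ stays in $B_l$'', which is exactly what has to be proved. Mass bookkeeping cannot close this: a plan sending mass $t$ from $z_1$ into $B_2$ and the same mass $t$ from $z_2$ into $B_1$ satisfies every constraint you have derived. So far you have used the optimality of $\eta$ only through $\int \sfd_\sfX^p\,\d\eta\le\varepsilon^p$, and that is not enough.

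The missing ingredient is a genuine use of optimality. The quickest repair is the paper's argument. Once Step~2 gives $\sfd_\sfX(x_i,x'_i)<r$ for all $i$, the identity coupling $\sum_i a_i\delta_{(x_i,x'_i)}$ is $c$-cyclically monotone. Along any cycle of indices, either $x_{i_k}=x_{i_{k+1}}$ and the corresponding terms coincide, or $\sfd_\sfX(x_{i_k},x'_{i_{k+1}})>\sfd_\sfX(x_{i_k},x_{i_{k+1}})-r>r>\sfd_\sfX(x_{i_{k+1}},x'_{i_{k+1}})$. Hence the identity coupling is optimal. (Equivalently, by Step~2 the cross-cluster flows of $\eta$ form a circulation, and rerouting any cycle of it inside the clusters strictly lowers the cost.)

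Alternatively, your ``crude route'' becomes rigorous if you choose $\eta$ to be an optimal \emph{extreme point} of the polytope $\Gamma(\mu,\mu')$. Its support is then a forest. Delete an edge and let $I_1$, $I_2$ be the index sets of the source atoms $z_j$ and of the target atoms of $\mu'$ lying in the component of its source endpoint. The mass on that edge equals $\sum_{i\in I_1}a_i-\sum_{i\in I_2}a_i$; being positive, it is $\ge\beta$. A cross edge would therefore cost at least $\beta r^p>\varepsilon^p\ge W_p(\mu,\mu')^p$, which is impossible.

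With either repair, your route gives an explicit $\varepsilon$ in terms of $\beta$ and $r$, whereas the paper obtains its $\delta$ non-constructively from the properness of $\frG_\aa$.
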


It does not imply that $\frG_\aa$ is a local isometry, as only the distances from $\mu = \frG_\aa(\xx)$ to any point $\mu' = \frG_\aa(\xx')$ in a ball centered at $\mu$ are preserved, not all distances between points of the ball. 

\begin{proof}
We write $\mu = \sum_{i=1}^m a_i \delta_{x_i}$ with the $x_i$ not necessarily distinct. We write $\theta(\mu)$ for the minimal distance between distinct elements of $\{ x_1, \ldots, x_m \}$. We also write $r(\aa)$ for the minimal value of the elements of $\aa$. The key estimate is:
\begin{equation}
\label{eq:local_isometry_partial}
\sfd_{\aa,\sfX^m}(\xx,\xx') \leq \frac{\theta(\mu) r(\aa)^{1/p}}{2} \quad \Rightarrow \quad W_p(\mu,\mu') = \sfd_{\aa,\sfX^m}(\xx,\xx'). 
\end{equation}
For that we only need to show that the coupling sending $x_i$ onto $x'_i$ for all $i$ is optimal in this case. It is enough to prove that the coupling is $c$-cyclically monotone \cite[Theorem 6.1.4]{AGS}. If $i_1, \ldots, i_K$ enumerate a subset of $\{ 1, \ldots, m \}$ with $K \leq m$ elements such that $x_{i_1}, \ldots, x_{i_K}$ are all distinct, identifying $K+1$ with $1$, we need to check that 
\begin{equation*}
\sum_{k=1}^K \sfd_{\sfX}(x_{i_k}, x'_{i_k})^p \leq \sum_{k=1}^K \sfd_{\sfX}(x_{i_k}, x'_{i_{k+1}})^p.  
\end{equation*}
From $\sfd_{\aa,\sfX^m}(\xx,\xx') \leq \theta(\mu) r(\aa)^{1/p} /2$ we have that $\sfd_{\sfX}(x_{i_k}, x'_{i_k}) \leq \theta(\mu)/2$ for all $k$. On the other hand by the triangle inequality we necessarily have $\sfd_{\sfX}(x_{i_k}, x'_{i_{k+1}}) \geq \sfd_{\sfX}(x_{i_k}, x_{i_{k+1}}) - \sfd_{\sfX}(x'_{i_{k+1}},x_{i_{k+1}} ) \geq \theta(\mu)/2$ for all $k$. The $c$-cyclical monotonicity follows, and thus~\eqref{eq:local_isometry_partial}. 

On the other hand, as $\frG_\aa$ has continuous inverse (Proposition~\ref{prop:super_generic_injective}), we can find $\delta > 0$ such that if $\mu' = \frG_\aa(\xx') \in \frG_\aa(\sfX^m)$ with $W_p(\mu,\mu') < \delta$ then $\sfd_{\aa,\sfX^m}(\xx,\xx') < \theta(\mu) r(\aa)^{1/p}/2$. 

The conclusion~\eqref{eq:local_isom} follows from~\eqref{eq:local_isometry_partial} by taking $\varepsilon = \min(\theta(\mu) r(\aa)^{1/p}/2, \delta)$.
\end{proof}

\section{Existence, measurability and continuity of transport representatives}
\label{sec:main_results}

We now have all the tools to state and prove our core results about existence, measurability, and continuity of transport representatives. We start by rewriting what we mean by ``transport representative'' with the notations of Section~\ref{sec:TLp}.

\begin{definition}
\label{def:meas_continuous_lipschitz_TLp}
A map $F : \cP_p(\sfX) \to \cP_p(\sfY)$ has a \emph{transport representative} if there exists $\frF : \cP_p(\sfX) \to \rmT\rmL_p(\sfX;\sfY)$ such that $\frF(\mu) = (\mu,\ff(\cdot,\mu))$ and $\ff(\cdot,\mu)_\sharp \mu = F(\mu)$ for all $\mu \in \cP_p(\sfX)$.

Being $\cP_p(\sfX)$, $\rmT\rmL_p(\sfX;\sfY)$ endowed with their canonical topology and their Borel $\sigma$-algebra, the representative is said Borel (resp. continuous) if $\frF$ is Borel (resp. continuous).   
\end{definition}

Having a transport representative means that for every $\mu \in \cP_p(\sfX)$ there exists $f : \sfX \to \sfY$ Borel such that $f_\sharp \mu = F(\mu)$. Indeed in this case necessarily $f \in L^p(\mu;\sfX;\sfY)$ so that $(\mu,f) \in \rmT\rmL_p(\sfX;\sfY)$ and we define $\frF(\mu) = (\mu,f)$. The structure of the space $\rmT\rmL_p(\sfX;\sfY)$ plays no role to analyze the existence of the transport representative, only its Borel measurability or continuity.

\begin{remark}
\label{rmk:mes_cont}
If $\frF$ is a Borel transport representative, then we can find $\ff : \sfX \times \cP_p(\sfX) \to \sfY$ \emph{jointly Borel} in $(x,\mu)$ such that $\frF(\mu) = (\mu,\ff(\cdot,\mu))$ and $\ff(\cdot,\mu)_\sharp \mu = F(\mu)$ for all $\mu \in \cP_p(\sfX)$. Indeed with Proposition~\ref{prop:represent_TLp} and the notations therein we simply define $\ff(x,\mu) = \frS(x,\frF(\mu))$.
Thus Borel measurability in the sense of Definition~\ref{def:meas_continuous_lipschitz_TLp} coincides with (joint) Borel measurability of a $\ff : \sfX \times \cP_p(\sfX) \to \sfY$ satisfying~\eqref{eq:intro_transport_rep}.

On the other hand if $\frF$ is a continuous transport representative in the sense of Definition~\ref{def:meas_continuous_lipschitz_TLp} it only corresponds to continuity with respect to the $\mu$ variable (of the map $\frF$), but it does not imply anything on continuity of $\ff(x,\mu)$ in the variable $x$.
\end{remark}

Recalling that $\iota(\mu,f) = (\ii \times f)_\sharp \mu$ is the isometric bijection between $\rmT\rmL_p(\sfX; \sfY)$ and $\cPg_p(\sfX \times \sfY)$, a mapping $\frF$ from $\cP_p(\sfX)$ to $\rmT\rmL_p(\sfX; \sfY)$ is a transport representative of $F$ if and only if 
\begin{equation}
\label{eq:transportlift_char}
\iota(\frF(\mu)) \in \Gamma(\mu,F(\mu)) \qquad \text{for all } \mu \in \cP_p(\sfX).
\end{equation}
In particular if $F$ admits a transport representative $\frF$, then it can be reconstructed easily from $\frF$ via the formula
\begin{equation}
\label{eq:FfromTlift}
F(\mu) = \pi^2_\sharp \left( (\ii \times \ff(\cdot,\mu))_\sharp \mu \right) = \pi^2_\sharp  (\iota \circ \frF)(\mu)  \qquad \text{for all } \mu \in \cP_p(\sfX).
\end{equation}

\subsection{Borel measurable transformation $F$}

In case we are only interested in Borel measurability, then without too much surprise Borel measurability of $\frF$ is a consequence of Borel measurability of the map $F$.

\begin{theorem}
\label{thm:measurable}
Assume $F : \cP_p(\sfX) \to \cP_p(\sfY)$ has at least one transport representative $\frF$. Then it has a Borel transport representative if and only if $F$ is Borel.
\end{theorem}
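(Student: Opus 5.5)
The direct implication is immediate from \eqref{eq:FfromTlift}: if $\frF$ is a Borel transport representative, then $F = \pi^2_\sharp \circ \iota \circ \frF$. Here $\iota$ is an isometry into $\cP_p(\sfX\times\sfY)$, hence continuous. The push-forward $\pi^2_\sharp : \cP_p(\sfX\times\sfY)\to\cP_p(\sfY)$ is continuous, being $1$-Lipschitz by \eqref{eq:lift_lipschitz}. A composition of a Borel map with continuous maps is Borel, so $F$ is Borel.

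For the converse, assume $F$ is Borel and that a transport representative exists. By \eqref{eq:transportlift_char}, a transport representative is exactly a selection $\mu \mapsto \gamma(\mu)$ of the correspondence
\begin{equation*}
\Phi(\mu) := \Gamma(\mu,F(\mu)) \cap \cPg_p(\sfX\times\sfY),
\end{equation*}
composed with $\iota^{-1}$. The assumption says $\Phi(\mu)\neq\emptyset$ for every $\mu$. The task is therefore to find a Borel selection of $\Phi$; composing it with $\iota^{-1}$, which is a homeomorphism onto $\rmT\rmL_p$, gives a Borel $\frF$.

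I plan to use a measurable selection theorem for analytic-graph correspondences, namely the Jankov--von Neumann uniformization theorem. Consider the graph
\begin{equation*}
\mathcal{G} = \{(\mu,\gamma) \in \cP_p(\sfX)\times \cP_p(\sfX\times\sfY) : \pi^1_\sharp\gamma=\mu,\ \pi^2_\sharp\gamma = F(\mu),\ \gamma\in\cPg_p(\sfX\times\sfY)\}.
\end{equation*}
The first two conditions define a Borel set, since $F$ is Borel and the marginal maps are continuous. The third condition is Borel as well, because $\cPg_p$ is a $G_\delta$ (proof of Theorem~\ref{thm:tl_polish}). So $\mathcal{G}$ is a Borel subset of a Polish space, and its projection onto the first factor is all of $\cP_p(\sfX)$. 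Jankov--von Neumann then yields a selection $s:\cP_p(\sfX)\to\cP_p(\sfX\times\sfY)$ with graph in $\mathcal{G}$ that is universally measurable (indeed $\sigma(\Sigma^1_1)$-measurable).

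The main obstacle is upgrading this selection to a genuinely Borel one. I would first try to modify $s$ on a null set, but that does not help, because measurability must hold for all measures simultaneously and not almost everywhere. A better route is to exploit the structure of the fibers. Each fiber $\Gamma(\mu,F(\mu))$ is compact (Lemma~\ref{lm:Gamma_hemic}), and the correspondence $\mu\mapsto\Gamma(\mu,F(\mu))$ is the composition of the continuous compact-valued correspondence $\Gamma$ with the Borel map $\mu\mapsto(\mu,F(\mu))$, so it is weakly measurable. Intersecting with the $G_\delta$ set $\cPg_p$ destroys compactness, however. To handle this, I would invoke Borel uniformization theorems for Borel sets whose sections are $\sigma$-compact or $K_\sigma$ (Arsenin--Kunugui), or whose sections are $G_\delta$ (a theorem of Kechris type for $G_\delta$ sections). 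The sections $\Phi(\mu)$ are $G_\delta$ subsets of the compact set $\Gamma(\mu,F(\mu))$, hence Polish.

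If this descriptive-set-theoretic citation turns out to be too heavy, a fallback is a hands-on construction in the spirit of Proposition~\ref{prop:represent_TLp}. One would select Borel-ly an element of $\Gamma(\mu,F(\mu))$ that is forced to be deterministic, for instance by lexicographic minimization of a countable family of strictly convex functionals. Whether minimizers of such functionals lie in $\cPg_p$ is precisely the delicate point.
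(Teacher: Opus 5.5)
Your direct implication is correct and matches the paper. So is your reduction of the converse to a Borel selection of $\Phi(\mu)=\Gamma(\mu,F(\mu))\cap\cPg_p(\sfX\times\sfY)$, and Jankov--von Neumann does give a $\sigma(\Sigma^1_1)$-measurable selection. The gap is the upgrade to a Borel selection. That step carries all the content, and none of the tools you name performs it.

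\begin{itemize}
\item \textbf{Arsenin--Kunugui} needs $K_\sigma$ sections, and $\Phi(\mu)$ is in general not $\sigma$-compact. Take $\mu$ atomless and $F(\mu)$ not a Dirac mass. Then $\Phi(\mu)$ is a dense $G_\delta$ of the compact set $\Gamma(\mu,F(\mu))$, and its complement is also dense (use $(1-\varepsilon)\gamma+\varepsilon\,\mu\otimes F(\mu)$). By Baire, $\Phi(\mu)$ is then not a countable union of compact sets.
\item \textbf{$G_\delta$ or Polish sections} give nothing. There are closed sets $P\subseteq\N^\N\times\N^\N$ with full projection and no Borel uniformization (split $P$ according to two disjoint, Borel-inseparable coanalytic sets). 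After embedding $\N^\N$ into $[0,1]$, their sections are $G_\delta$ subsets of a compact set.
\item \textbf{Kechris' theorem} needs non-meager sections. Relative to $\Gamma(\mu,F(\mu))$, however, $\Phi(\mu)$ can be nowhere dense. For $\mu=\frac13\delta_0+\frac23\delta_1$ and $F(\mu)=\frac13\delta_a+\frac23\delta_b$ with $a\neq b$, one has $\Gamma(\mu,F(\mu))=\{\gamma_t\}_{t\in[0,1/3]}$, where $\gamma_t=(\frac13-t)\delta_{(0,a)}+t\,\delta_{(0,b)}+t\,\delta_{(1,a)}+(\frac23-t)\delta_{(1,b)}$, and $\Phi(\mu)=\{\gamma_0\}$.
\item \textbf{Your fallback} fails as well. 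Deterministic couplings are extreme points of $\Gamma(\mu,F(\mu))$, while minimizers of strictly convex functionals typically are not. Even extreme points need not be deterministic, as $\gamma_{1/3}$ above shows.
\end{itemize}

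The paper avoids uniformization altogether. $\cPg_p(\sfX\times\sfY)$ is Polish (Theorem~\ref{thm:tl_polish}) and $\Phi(\mu)$ is closed in it. So Kuratowski--Ryll-Nardzewski yields a Borel selection as soon as $\{\mu:\Phi(\mu)\cap\tilde B\neq\emptyset\}$ is Borel for every open $\tilde B\subseteq\cP_p(\sfX\times\sfY)$.

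This set is a priori only analytic, so your obstacle reappears exactly there. Be aware that the paper's identity $C=\bigcap_m\{(\mu,\nu):\Gamma(\mu,\nu)\cap\tilde B\cap H_m^c\neq\emptyset\}$ only gives the inclusion $\subseteq$. The sets $\tilde B\cap H_m^c$ are open and no compactness is available. In the example above, take $\tilde B=\{\gamma:\gamma(U)>0\}$ with $U$ a small ball around $(0,b)$. The disintegration of $\gamma_t$ is $O(t)$-close to Dirac masses, so for every $m$ one has $\gamma_t\in\tilde B\cap H_m^c$ once $t>0$ is small. Yet $\Phi(\mu)\cap\tilde B=\emptyset$.

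One way to close the argument goes through the closure of $\Phi(\mu)$:
\begin{itemize}
\item $\overline{\Phi(\mu)}$ equals the compact set $A(\mu)$ of those $\gamma\in\Gamma(\mu,F(\mu))$ that send each atom of $\mu$ to a single point. Equivalently, $A(\mu)$ is the set where $(\gamma\otimes\gamma)(\mathrm{diag})=(\mu\otimes\mu)(\mathrm{diag})$.
\item For the inclusion $A(\mu)\subseteq\overline{\Phi(\mu)}$, approximate the coupling of the diffuse part of $\mu$ by maps. This is possible because that part is atomless.
\item An open set meets $\Phi(\mu)$ if and only if it meets $A(\mu)$.
\item The set $\{(\mu,\gamma):\gamma\in A(\mu)\cap\tilde B\}$ is Borel, since $F$ is Borel and both diagonal masses are upper semicontinuous. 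Its sections are $\sigma$-compact, so its projection is Borel by Arsenin--Kunugui.
\end{itemize}
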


\begin{proof}
Assume first that $F$ has a Borel transport representative. From~\eqref{eq:FfromTlift}, expressing $F$ in terms of its transport representative, we see that $F$ is Borel.

Conversely, assume that $F$ is Borel. Let $G : \cP_p(\sfX) \twoheadrightarrow \cPg_p(\sfX \times \sfY)$ the correspondence defined by:
\begin{equation*}
G(\mu) = \Gamma(\mu, F(\mu)) \cap \cPg_p(\sfX \times \sfY).
\end{equation*}
Given~\eqref{eq:transportlift_char}, it is enough to prove that $G$ has a least a Borel selection, as in this case we compose any such selection with $\iota^{-1}$ to obtain a Borel transport representative. As $(\cPg_p(\sfX \times \sfY),W_p)$ is isometric to $\rmT\rmL_p(\sfX;\sfY)$ thus metric, separable and completely metrizable (see Theorem~\ref{thm:tl_polish}), by the Kuratowski–Ryll-Nardzewski selection Theorem \cite[Theorem 18.13]{aliprantis2006infinite} we only need to prove that $G$ has closed non-empty values and is weakly measurable. The selection $G$ has closed values as $\Gamma(\mu,F(\mu))$ is closed, and is never empty as there exists a transport representative by assumption. Thus we need to show that $G$ is weakly measurable, in the sense that lower inverse of open sets are Borel \cite[Definition 8.1]{aliprantis2006infinite}. So let $B \subseteq \cPg_p(\sfX \times \sfY)$ an open set.
The lower inverse image of $B$ is  
\begin{equation}
\label{eq:aux_proof_meas}
\{ \mu \in \cP_p(\sfX) \ : \ G(\mu) \cap B \neq \emptyset \} = \{ \mu \in \cP_p(\sfX) \ : \ (\mu,F(\mu)) \in C  \},
\end{equation}
where we define $C = \{ (\mu, \nu) \in \cP_p(\sfX) \times \cP_p(\sfY) \ : \ \Gamma(\mu,\nu) \cap B \neq \emptyset  \}$.
As $F$ is Borel, it is enough to prove that $C$ is a Borel subset of $\cP_p(\sfX) \times \cP_p(\sfY)$. With $\tilde{B}$ an open set of $\cP_p(\sfX \times \sfY)$ such that $B = \tilde{B} \cap \cPg_p(\sfX \times \sfY)$ and $H_m$ the closed set of $\cP_p(\sfX \times \sfY)$ defined in~\eqref{eq:def_Hm} we have 
\begin{equation*}
C = \bigcap_{m \geq 1} \{ (\mu, \nu) \ : \ \Gamma(\mu,\nu) \cap \tilde{B} \cap H_m^c \neq \emptyset  \}.
\end{equation*}
Each of the set in the intersection in the right hand side is Borel by hemicontinuity of $\Gamma$ (as lower inverse image of an open set), thus $C$ is Borel as countable intersection of Borel sets. The weak Borel measurability of $G$ follows from the Borel measurability of $F$ and~\eqref{eq:aux_proof_meas}.
\end{proof}

\subsection{Continuous transformation $F$}

We now assume that $F$ is continuous. The main message is that in this case we cannot a priori guarantee that a transport representative, if it exists, is continuous. 

Let us first focus on the existence of the representative. Any obstruction to the existence, if it exists, must manifest at the level of the empirical measures. We first recall a well-known result when the source measure $\mu$ is atomless.

\begin{lemma}
\label{lm:existence_map_atomless}
If $\mu \in \cP(\sfX)$ has no atoms, then for any $\nu \in \cP(\sfY)$ there exists $f$ Borel with $f_\sharp \mu = \nu$.
\end{lemma}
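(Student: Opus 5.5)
The plan is to reduce to the unit interval and use quantile functions. Both $\sfX$ and $\sfY$ are Polish spaces with Borel $\sigma$-algebras, so they are standard Borel spaces. Since $\mu$ is atomless, $\sfX$ is uncountable (a countable space carries only atomic measures). By the Borel isomorphism theorem (Kuratowski), there is a Borel bijection $\phi : \sfX \to [0,1]$ with Borel inverse. Likewise there is a Borel injection $\psi : \sfY \to [0,1]$ with Borel image and Borel inverse on that image, either a Borel isomorphism onto $[0,1]$ or onto a countable Borel subset. The measure $\tilde\mu = \phi_\sharp \mu$ is still atomless, because $\phi$ is injective. We set $\tilde \nu = \psi_\sharp \nu \in \cP([0,1])$. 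It then suffices to find a Borel $g : [0,1] \to [0,1]$ with $g_\sharp \tilde\mu = \tilde\nu$ and take $f = \psi^{-1} \circ g \circ \phi$. Here $\psi^{-1}$ is extended to a fixed constant outside $\psi(\sfY)$. This is harmless because $\tilde\nu$ is concentrated on $\psi(\sfY)$, so $g$ takes values there $\tilde\mu$-a.e.

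On $[0,1]$ we use the two cumulative distribution functions. Let $F_{\tilde\mu}(t) = \tilde\mu([0,t])$; it is continuous and nondecreasing because $\tilde\mu$ has no atoms. The standard fact is that $(F_{\tilde\mu})_\sharp \tilde\mu$ is the Lebesgue measure $\mathcal L^1$ on $[0,1]$. To check it, note that for $s \in (0,1)$ continuity gives a largest $t$ with $F_{\tilde\mu}(t) = s$, so $\{F_{\tilde\mu} \le s\} = [0,t]$ has measure $s$. Next let $Q_{\tilde\nu}(s) = \inf\{ t : \tilde\nu([0,t]) \ge s\}$ be the quantile function. It is nondecreasing, hence Borel, and it satisfies $(Q_{\tilde\nu})_\sharp \mathcal L^1 = \tilde\nu$, via the equivalence $Q_{\tilde\nu}(s) \le t \iff s \le \tilde\nu([0,t])$. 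Then $g = Q_{\tilde\nu} \circ F_{\tilde\mu}$ is Borel as a composition of monotone maps, and $g_\sharp\tilde\mu = \tilde\nu$.

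The only point needing care, and the main obstacle, is the identity $(F_{\tilde\mu})_\sharp\tilde\mu = \mathcal L^1$. It is exactly where atomlessness enters: an atom would create a jump in $F_{\tilde\mu}$ and a gap in its image. The remaining steps are classical; one could instead cite a reference such as \cite{bogachev2007measure} or \cite[Theorem 9.2.2]{bogachev2007measure} for isomorphism of atomless standard probability spaces with $([0,1],\mathcal L^1)$.
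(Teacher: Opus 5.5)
Your proof is correct and follows the same route as the paper: identify $\sfY$ with a Borel subset of $[0,1]$, push the atomless $\mu$ onto Lebesgue measure, and conclude with the quantile function. The only difference is that you build the map sending $\mu$ to Lebesgue measure explicitly, via a Borel isomorphism $\sfX \cong [0,1]$ followed by the continuous distribution function $F_{\tilde\mu}$, whereas the paper cites Bogachev for this step.
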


\begin{proof}
As $\mu$ has no atoms we can find a Borel map from $\sfX$ to $[0,1]$ sending $\mu$ onto the Lebesgue measure,  \cite[Proposition 9.1.11]{bogachev2007measure}. Moreover, using a Borel isomorphism between a subset of $[0,1]$ and $\sfY$ \cite[Theorem 1.1]{kallenberg2017random}, we can assume $\sfY \subseteq [0,1]$. The result follows as any probability on $[0,1]$ can be represented as the push-forward of the Lebesgue measure by a Borel map, e.g. by the quantile function.
\end{proof}

Building on this result, we can conclude to a sufficient condition for the existence of a transport representative.

\begin{proposition}
\label{prop:existence_non_splitting}
If $F : \cP_p(\sfX) \to \cP_p(\sfY)$ is continuous and non-splitting on empirical measures, then there exists a transport representative, meaning for every $\mu \in \cP_p(\sfX)$ there exists $f : \sfX \to \sfY$ Borel such that $f_\sharp \mu = F(\mu)$. 
\end{proposition}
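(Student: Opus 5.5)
Fix $\mu \in \cP_p(\sfX)$ and split it into its atomic and diffuse parts, $\mu = \sum_{i \in I} a_i \delta_{x_i} + \mu^d$, with $I$ countable, $x_i$ distinct, $a_i>0$ and $\mu^d$ atomless. If $\mu^d = \mu$, Lemma~\ref{lm:existence_map_atomless} already gives $f$. In general, the plan is to find a coupling $\gamma \in \Gamma(\mu, F(\mu))$ of the form
\begin{equation*}
\gamma = \sum_{i \in I} a_i \delta_{(x_i,y_i)} + \gamma^d,
\end{equation*}
where $\gamma^d$ has first marginal $\mu^d$. Given such a $\gamma$, the second marginal $\nu^d$ of $\gamma^d$ satisfies $F(\mu) = \sum_i a_i \delta_{y_i} + \nu^d$. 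Lemma~\ref{lm:existence_map_atomless}, applied to the normalized $\mu^d$ (after removing a $\mu^d$-null set containing the countable set $\{x_i\}$), gives a Borel $g$ with $g_\sharp \mu^d = \nu^d$. We then set $f(x_i)=y_i$ and $f=g$ elsewhere; this $f$ is Borel and pushes $\mu$ to $F(\mu)$. So the task reduces to producing a coupling that does not split the atoms of $\mu$.

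To produce it, we approximate $\mu$ by empirical measures. We choose empirical $\mu_n \to \mu$ in $W_p$ such that each atom $x_i$ with $i \le n$ (in some enumeration) is kept as an atom of $\mu_n$ with a rational weight $a_i^n \to a_i$. The rest of $\mu_n$ approximates the remainder of $\mu$ by atoms located away from $\{x_1,\dots,x_n\}$, or at least as distinct points. Since $F$ is non-splitting on empirical measures, there are Borel maps $f_n$ with $F(\mu_n) = {f_n}_\sharp \mu_n$, and we let $\gamma_n = (\ii \times f_n)_\sharp \mu_n \in \Gamma(\mu_n, F(\mu_n))$. By continuity of $F$, $F(\mu_n) \to F(\mu)$. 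By Lemma~\ref{lm:Gamma_hemic} (compactness and upper hemicontinuity of $\Gamma$), up to a subsequence $\gamma_n \to \gamma \in \Gamma(\mu, F(\mu))$. For each fixed $i$, the point $y_i^n = f_n(x_i)$ carries mass at least $a_i^n$ in $\gamma_n$ at $(x_i, y_i^n)$. Tightness of $(F(\mu_n))$ keeps $y_i^n$ in a compact set, because a point of mass $\ge a_i/2$ must lie in any compact set of measure $> 1-a_i/2$. By a diagonal argument, $y_i^n \to y_i$ for all $i$.

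The main obstacle is passing the atomic structure to the limit, namely showing $\gamma \ge \sum_i a_i \delta_{(x_i,y_i)}$ and that $\gamma(\{x_i\}\times \sfY)=a_i$ is concentrated at $y_i$. For the lower bound, the measures $\sum_{i\le k} a_i^n \delta_{(x_i,y_i^n)} \le \gamma_n$ converge to $\sum_{i \le k} a_i \delta_{(x_i,y_i)}$. Since inequality between measures passes to narrow limits (test against nonnegative bounded continuous functions), we get $\gamma \ge \sum_{i\le k} a_i\delta_{(x_i,y_i)}$ for every $k$, hence for the whole sum. Then $\gamma(\{x_i\}\times\sfY) = \mu(\{x_i\}) = a_i$ forces the restriction of $\gamma$ to $\{x_i\}\times \sfY$ to equal exactly $a_i\delta_{(x_i,y_i)}$. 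The remainder $\gamma^d = \gamma - \sum_i a_i \delta_{(x_i,y_i)}$ is a nonnegative measure with first marginal $\mu - \sum_i a_i\delta_{x_i} = \mu^d$, which is the required structure. One technical point to check is the construction of $\mu_n$ with rational weights that converges in $W_p$ while preserving the atoms $x_1,\dots,x_n$ exactly. This should follow by taking a standard empirical approximation of the remainder and adjusting the weights by $O(1/n)$.
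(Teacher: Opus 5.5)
Your proof is correct and follows essentially the same route as the paper: approximate $\mu$ by empirical measures that keep the atoms $x_i$ with rational weights tending to $a_i$, use non-splitting, tightness of $(F(\mu_n))$ and a diagonal argument to get $f_n(x_i)\to y_i$, deduce $F(\mu)=\sum_i a_i\delta_{y_i}+\nu^d$, and conclude with Lemma~\ref{lm:existence_map_atomless}. The only difference is cosmetic: you pass to the limit in the couplings $\gamma_n=(\ii\times f_n)_\sharp\mu_n$ and subtract the atomic part, which handles infinitely many atoms cleanly via $\gamma\ge\sum_{i\le k}a_i\delta_{(x_i,y_i)}$; the paper instead passes to the limit directly in $F(\mu^{(n)})=\sum_m a^{(n)}_m\delta_{f^{(n)}(x_m)}+f^{(n)}_\sharp\mu^{(n)}_\d$ and extracts a limit $\nu$ of the second term.
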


\begin{proof}
Let us consider $\mu \in \cP(\sfX)$, we decompose it into its atomic and diffuse part:
\begin{equation*}
\mu = \sum_{m = 1}^M a_m \delta_{x_m} + \mu_\d,
\end{equation*}
where $a_m > 0$ for all $m$, $M \in \N \cup \{ + \infty \}$, the $x_m$ are all distinct, and $\mu_\d \in \cM(\sfX)$ a diffuse measure, i.e. without atoms. Consider $(\aa^{(n)})_n$ a sequence valued in $\Q_+^M$ such that $\aa^{(n)}$ has finite support for any $n$, and such that, for every $m \geq 1$, $a^{(n)}_m$ is non-decreasing and converges to $a_m$ as $n \to + \infty$. We can also find $\mu_\d^{(n)}$ a sequence of empirical measures converging in $\cP_p(\sfX)$ to $\mu_\d$, see \cite[Theorem 6.18]{villani2009optimal}. We write 
\begin{equation*}
\mu^{(n)} = \sum_{m = 1}^M a^{(n)}_m \delta_{x_m} + \mu_\d^{(n)},
\end{equation*}
that we normalize in such a way that $\mu^{(n)}$ is a probability measure. 
It defines an empirical measure and $\mu^{(n)}$ converges to $\mu$ as $n \to + \infty$. By continuity $F(\mu^{(n)})$ converges to $F(\mu)$ as $n \to + \infty$. Moreover as $F$ is non-splitting on empirical measures, we can find $f^{(n)} :\sfX \to \sfY$ such that $F(\mu^{(n)}) = f^{(n)}_\sharp \mu^{(n)}$: we write it as
\begin{equation}
\label{eq:aux_proof_existence}
F(\mu^{(n)}) = f^{(n)}_\sharp \left( \sum_{m = 1}^M a^{(n)}_m \delta_{x_m} + \mu^{(n)}_\d \right) = \sum_{m = 1}^M a^{(n)}_m \delta_{f^{(n)}(x_m)} + f^{(n)}_\sharp \mu_\d^{(n)}.  
\end{equation}
For a fixed $m \geq 1$, by tightness of $\{ F(\mu^{(n)}) : n \ge 1 \}$ and as $a^{(n)}_m \geq a_m/2 > 0$ for $n$ large enough, then $(f^{(n)}(x_m))_{n \in \N}$ belongs to a compact set, and thus up to extraction converges to a limit $y_m$ as $n \to + \infty$. By a diagonal argument, up to extraction the convergence $f^{(n)}(x_m) \to y_m$ holds for all $m \geq 1$. Thus the first summand in the right hand side of~\eqref{eq:aux_proof_existence} converges narrowly to $\sum_{m} a_m \delta_{y_m}$. For the second term in~\eqref{eq:aux_proof_existence}, it is tight as the left hand side is, thus up to a further extraction $f^{(n)}_\sharp \mu^{(n)}_\d$ converges to $\nu \in \cM(\sfY)$. We deduce, passing to the limit in~\eqref{eq:aux_proof_existence}
\begin{equation*}
F(\mu) = \sum_{m = 1}^M a_m \delta_{y_m} + \nu.
\end{equation*}
Eventually we can construct $f$ such that $f_\sharp  \mu = F(\mu)$. Indeed by Lemma~\ref{lm:existence_map_atomless} there exists $f'$ such that $f'_\sharp \mu_\d = \nu$ and define $f(x_m) = y_m$ for any $m$, and $f(x) = f'(x)$ if $x$ is not an atom of $\mu$. 
\end{proof}

We now have all the tools to prove the first main result announced in the introduction: that $F$ continuous and non-splitting implies the existence of a measurable transport representative.

\begin{proof}[\textbf{Proof of Theorem~\ref{thm:main_continuity}}]
We assume that $F : \cP_p(\sfX) \to \cP_p(\sfY)$ is continuous and non-splitting on empirical measures. By Proposition~\ref{prop:existence_non_splitting} it has at least a transport representative $\frF$, and by Theorem~\ref{thm:measurable} $\frF$ can be chosen to be Borel as a map from $\cP_p(\sfX)$ to $\rmT\rmL_p(\sfX;\sfY)$. This implies that we can find $\ff : \sfX \times \cP_p(\sfX) \to \sfY$ jointly Borel such that $\frF(\mu) = (\mu,\ff(\cdot,\mu))$, see Remark~\ref{rmk:mes_cont}.
\end{proof}

We also underline the following important consequence of our theory: a continuous transport representative, if it exists, is unique. Thus, there is no leeway in trying to reconstruct a continuous transport representative, as there is at most one.

\begin{theorem}
\label{thm:uniquenss_t_rep}
Take $F : \cP_p(\sfX) \to \cP_p(\sfY)$ a continuous map. Then $F$ admits at most one continuous transport representative, and it can be characterized by the values of $F$ on empirical measures.
\end{theorem}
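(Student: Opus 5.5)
The plan is a density argument. It relies on three earlier facts: generic empirical measures are dense, the representative is unique on generic measures, and $\rmT\rmL_p(\sfX;\sfY)$ is a metric space, hence Hausdorff.

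Let $\frF_1,\frF_2$ be two continuous transport representatives of $F$, with $\frF_k(\mu)=(\mu,\ff_k(\cdot,\mu))$.

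First fix a generic empirical measure $\mu=\sum_{i=1}^m a_i\delta_{x_i}$. The maps satisfy $\ff_1(\cdot,\mu)_\sharp\mu=F(\mu)=\ff_2(\cdot,\mu)_\sharp\mu$. By Corollary~\ref{crl:atmost_one_f_sg}, $\ff_1(\cdot,\mu)=\ff_2(\cdot,\mu)$ on $\supp(\mu)=\{x_1,\dots,x_m\}$. Since $\mu$ is concentrated on its atoms, the two maps agree $\mu$-a.e., so $\frF_1(\mu)=\frF_2(\mu)$ in $\rmT\rmL_p(\sfX;\sfY)$.

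Second, the value at a generic measure is determined by $F(\mu)$ alone. By Proposition~\ref{prop:super_generic_injective}, $F(\mu)=\frG_\aa(y_1,\dots,y_m)$ for a unique $(y_1,\dots,y_m)\in\sfY^m$. The value $\ff(x_i,\mu)=y_i$ is recovered as $\frG_\aa^{-1}(F(\mu))$. Note that $F(\mu)$ lies in the image of $\frG_\aa$ because a transport representative exists.

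Third, take an arbitrary $\mu\in\cP_p(\sfX)$. By Corollary~\ref{crl:dense_supergeneric} there are generic empirical measures $\mu_n\to\mu$ in $W_p$. Continuity of $\frF_1,\frF_2$ gives $\frF_k(\mu_n)\to\frF_k(\mu)$ for $k=1,2$. The two sequences coincide, and limits in the metric space $\rmT\rmL_p(\sfX;\sfY)$ (Theorem~\ref{thm:tl_polish}) are unique, so $\frF_1(\mu)=\frF_2(\mu)$. Concretely, $\iota(\frF(\mu))=\lim_n (\ii\times\ff(\cdot,\mu_n))_\sharp\mu_n$ in $\cP_p(\sfX\times\sfY)$. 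Each term of this sequence is computed from $F(\mu_n)$ via $\frG_{\aa^{(n)}}^{-1}$, which gives the characterization of the representative by the values of $F$ on empirical measures.

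There is no real obstacle. The only point needing care is the first step: genericity of the weights is essential, since for non-generic empirical measures two different maps can push $\mu$ to the same $F(\mu)$. This is why the density of generic measures is used rather than density of all empirical measures.
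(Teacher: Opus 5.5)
Your proposal is correct and follows the same route as the paper. Any two continuous representatives agree on generic measures by Corollary~\ref{crl:atmost_one_f_sg}, and this extends to all of $\cP_p(\sfX)$ by density of generic empirical measures (Corollary~\ref{crl:dense_supergeneric}) together with continuity into the metric space $\rmT\rmL_p(\sfX;\sfY)$; your explicit recovery of $\ff(x_i,\mu)$ as $\frG_\aa^{-1}(F(\mu))$ just makes concrete the ``characterized by the values of $F$ on empirical measures'' clause, which the paper leaves implicit.
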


\begin{proof}
Suppose $\frF$, $\frF'$ are two continuous transport representatives. Then, if $\mu$ is a generic measure, from Corollary~\ref{crl:atmost_one_f_sg} we obtain $\frF(\mu) = \frF'(\mu)$. As generic measures are dense (Corollary~\ref{crl:dense_supergeneric}) and $\frF$, $\frF'$ are continuous, we have $\frF = \frF'$ everywhere. 
\end{proof}

To conclude this section we provide two examples illustrating the sharpness of the result: even if $F$ is continuous the representative $\ff$ is not necessarily continuous in $\mu$ nor in $x$. 

\begin{example}
\label{ex:t_rep_non_c}
The transformation $F$ being continuous and the transport representative existing is not sufficient to guarantee that the transport representative can be chosen continuous as a function of $\mu$. 

We take $\sfX = \sfY = [0,1]$ and denote by $\lambda$ the Lebesgue measure on $\sfX$. Let $g : \R \to [0,1]$ a $1$-periodic and Lipschitz function such that $g_\sharp \lambda = \lambda$, for instance $g(x) = 2 \min(x,1-x)$. We also fix $\alpha \in (0,1)$. With $W_p$ the $p$-Wasserstein distance, we define 
\begin{equation*}
\ff(x,\mu) = \begin{cases}
\displaystyle{g \left( \frac{x}{W_p(\mu,\lambda)^\alpha} \right)} & \text{if } \mu \neq \lambda, \\
x & \text{if } \mu = \lambda.
\end{cases}
\end{equation*}
We also define $F(\mu) = \ff(\cdot,\mu)_\sharp \mu$, in particular $F(\lambda) = \lambda$.

\uline{$F$ has a transport representative}. Given how it is defined, it is clear that $F$ has a transport representative, namely $\ff$. 

\uline{$F$ is continuous}. Continuity in $\mu \neq \lambda$ is easy. Indeed, if $(\mu_n)_{n \in \N}$ converges to $\mu \neq \lambda$, then $\ff(\cdot,\mu_n)$ converges uniformly to $\ff(\cdot,\mu)$ on $\sfX$, thus the result follows by Lemma~\ref{lm:joint_continuity_fsharpmu}.  

Let us also prove continuity at $\lambda$. If $\mu \in \cP_p(\sfX)$, by the triangle inequality
\begin{equation*}
W_p(F(\mu),F(\lambda)) = W_p(\ff(\cdot,\mu)_\sharp \mu,\lambda) \le W_p(\ff(\cdot,\mu)_\sharp \mu, \ff(\cdot,\mu)_\sharp \lambda) + W_p(\ff(\cdot,\mu)_\sharp \lambda,\lambda).
\end{equation*}
For the first term, with $L$ the Lipschitz constant of $g$, we use that $\ff(\cdot,\mu)$ is $L W_p(\mu,\lambda)^{-\alpha}$ Lipschitz. Thus with~\eqref{eq:lift_lipschitz} we obtain
\begin{equation*}
 W_p(\ff(\cdot,\mu)_\sharp \mu, \ff(\cdot,\mu)_\sharp \lambda) \le  L W_p(\mu,\lambda)^{-\alpha} \cdot W_p(\mu,\lambda) = L W_p(\mu,\lambda)^{1-\alpha}.  
\end{equation*}
For the second term, call $a = W_p(\mu,\lambda)^\alpha$, and $n$ the largest integer with $n a \le 1$. As $\ff(\cdot,\mu)$ is $a$ periodic, $\ff(\cdot,\mu)_\sharp (\lambda \mres [0,n a]) = n a \lambda$. With $\| \cdot \|_{\mathrm{TV}}$ the total variation norm, using that it dominates the Wasserstein distance as we are on a space of diameter $1$, 
\begin{equation*}
W_p(\ff(\cdot,\mu)_\sharp \lambda,\lambda) \le \| \ff(\cdot,\mu)_\sharp \lambda - \lambda \|_\mathrm{TV} \leq \| \ff(\cdot,\mu)_\sharp (\lambda \mres [0,na]) - \lambda \|_\mathrm{TV} + \| \ff(\cdot,\mu)_\sharp (\lambda \mres [na,1])  \|_\mathrm{TV} \leq 2 a 
\end{equation*}
where we used $1 - na < a$. Given the expression of $a$,
putting these two pieces together, we deduce, with $L$ the Lipschitz constant of $g$,
\begin{equation*}
W_p(F(\mu),F(\lambda)) \leq L W_p(\mu,\lambda)^{1-\alpha} + 2 W_p(\mu,\lambda)^\alpha. 
\end{equation*}
This is enough to conclude to continuity of $F$ at $\lambda$. 

\uline{$F$ cannot have a continuous transport representative}.
If $F$ admitted a continuous transport representative $\frF$ then by Corollary~\ref{crl:atmost_one_f_sg} and continuity of $\mu \mapsto \ff(\cdot,\mu)$, we have $\frF(\mu) = (\mu, \ff(\cdot,\mu))$ at least for $\mu \neq \lambda$. Being $h$ a bounded continuous function with $\int h \d \lambda = 0$, we consider $\mu_\varepsilon = (1+\varepsilon h) \lambda$ which belongs to $\cP(\sfX)$ for $\varepsilon$ small enough. Moreover $\mu_\varepsilon \to \lambda$ as $\varepsilon \to 0$, and it can be checked that $\iota(\mu_\varepsilon,\ff(\cdot,\mu_\varepsilon)) = (\ii \times \ff(\cdot,\mu_\varepsilon))_\sharp \mu_\varepsilon$ converges narrowly to $\lambda \otimes \lambda$ which is not an element of $\cPg_p(\sfX \times \sfY)$.
Thus the family $\{ (\mu_\varepsilon, \ff(\cdot,\mu_\varepsilon)) \ : \ \varepsilon > 0 \}$ cannot be relatively compact. But if $\frF$ is continuous then $\frF(\cP_p(\sfX))$ is compact as the image of a compact set by a continuous map.
\end{example}

\begin{example}
\label{ex:continuity_x}
The transformation $F$ being continuous and the transport representative existing is not sufficient to guarantee that the transport representative can be chosen continuous as a function of $x$ for a fixed $\mu$.

We take $\sfX = \sfY = [-1,1]$ and denote by $\lambda$ the Lebesgue measure on $\sfX$, rescaled to be a probability measure. With $W_p$ the $p$-Wasserstein distance, and $\mathrm{sgn}$ the sign function, we define 
\begin{equation*}
\ff(x,\mu) = \begin{cases}
\displaystyle{\tanh \left( \frac{x}{W_p(\mu,\lambda)} \right)} & \text{if } \mu \neq \lambda, \\
\mathrm{sgn}(x) & \text{if } \mu = \lambda.
\end{cases}
\end{equation*}
We also define $F(\mu) = \ff(\cdot,\mu)_\sharp \mu$, in particular $F(\lambda) = \frac{1}{2} (\delta_{-1} + \delta_{1})$.

\uline{$F$ has a continuous transport representative}. Clearly $F$ has a transport representative that we write $\frF$, we claim that $\mu \mapsto  (\ii \times \ff(\cdot,\mu))_\sharp \mu$ is continuous, so that $\frF$ is also continuous, thus $F$ too.

Continuity in $\mu \neq \lambda$ is easy. Indeed, if $\mu_n$ converges to $\mu \neq \lambda$ as $n \to + \infty$, then $\ff(\cdot,\mu_n)$ converges uniformly to $\ff(\cdot,\mu)$ on $\sfX$, thus we can apply Lemma~\ref{lm:joint_continuity_fsharpmu}. 

Let us look at continuity at $\lambda$: we take a sequence $(\mu_n)_{n \in \N}$ converging to $\lambda$. Fix $\eps > 0$ and write $\sfX_\eps = \sfX \setminus [- \eps,\eps]$. Then $\ii \times \ff(\cdot,\mu_n)$ converges uniformly to $\ii \times \ff(\cdot,\lambda)$ on $\sfX_\eps$. As moreover $\mu_n \mres \sfX_\eps$ converges narrowly to $\lambda \mres \sfX_\eps$, we see that $(\ii \times \ff(\cdot,\mu_n))_\sharp (\mu_n \mres \sfX_\eps)$ converges to $(\ii \times \ff(\cdot,\lambda))_\sharp (\lambda \mres \sfX_\eps)$ (Lemma~\ref{lm:joint_continuity_fsharpmu}). This latter expression converges to $(\ii \times \ff(\cdot,\lambda))_\sharp \lambda$ as $\eps \to 0$. On the other hand the total variation norm of $(\ii \times \ff(\cdot,\mu_n))_\sharp (\mu_n \mres [-\eps,\eps])$ is bounded by $\eps$. As $\eps$ is arbitrary we deduce from this that $(\ii \times \ff(\cdot,\mu_n))_\sharp \mu_n =(\ii \times  \ff(\cdot,\mu_n))_\sharp (\mu_n \mres \sfX_\eps) + (\ii \times \ff(\cdot,\mu_n))_\sharp (\mu_n \mres [-\eps,\eps])$ converges narrowly to $(\ii \times \ff(\cdot,\lambda))_\sharp \lambda$.

\uline{The transport representative is not continuous as function of $x$ for $\mu=\lambda$}. Clearly $\ff(\cdot,\lambda)$ is discontinuous. Actually there is no continuous map $f$ such that $f_\sharp \lambda = F(\lambda) = \frac{1}{2} (\delta_{-1} + \delta_{1})$.
\end{example}

\subsection{Lipschitz transformation $F$}

In the previous section we assumed only that $F$ was continuous, and it is not enough to guarantee the existence of a continuous transport representative even with the non-splitting assumption. We now prove that if $F$ is Lipschitz we can prove the existence of a continuous transport representative.

This result will work in the framework of length spaces, we recall the minimal notions of analysis in metric spaces needed for our purpose: see \cite[Section 1.1]{AGS} or \cite{ambrosio2004topics} for a complete overview and proofs of the following. In a metric space $\sfZ$, a curve $t \in [0,1] \mapsto z(t) \in \sfZ$ is Lipschitz if there exists $L < + \infty$ such that $\sfd_\sfZ(z(t),z(s)) \leq L |t-s|$ for all $t,s \in [0,1]$. In this case, the \emph{metric derivative} of $z$, defined as
\begin{equation*}
|\dot{z}(t)|_{\sfZ} = \lim_{s \to t, s \neq t} \frac{\sfd_{\sfZ}(z(s),z(t))}{|s-t|}
\end{equation*}
exists for a.e. $t$. 
The length of $z$ is defined as
\begin{equation*}
\mathrm{length}(z) = \int_0^1 |\dot{z}(t)|_\sfZ \, \d t. 
\end{equation*}
Up to reparametrizating the curve, which does not change the length, we can assume that $|\dot{z}(t)|_{\sfZ} = \mathrm{length}(z)$ for a.e. $t \in [0,1]$.
The space $(\sfZ, \sfd_\sfZ)$ is called a \emph{length space} if the distance between $z_0,z_1 \in \sfZ$ is the infimum of $\mathrm{length}(z)$ for all Lipschitz curves $z : [0,1] \to \sfZ$ joining $z_0$ to $z_1$. If the infimum is attained, we call it a \emph{geodesic space}.
Any convex subset of a Euclidean space is a geodesic space, thus a length space.

\begin{theorem}
\label{thm:transport_lift_lipschitz}
Assume $\sfX$ is a length space. 
Take $F : \cP_p(\sfX) \to \cP_p(\sfY)$ a $L$-Lipschitz map (meaning $W_p(F(\mu),F(\mu')) \leq L W_p(\mu,\mu')$ for all $\mu, \mu' \in \cP_p(\sfX)$) which is non-splitting for empirical measures. 

Then $F$ admits a continuous transport representative. 
The transport representative $\frF$ is $(1+L^p)^{1/p}$-Lipschitz from $\cP_p(\sfX)$ to $\rmT\rmL_p(\sfX;\sfY)$. 
Moreover, writing $\frF(\mu) = (\mu, \ff(\cdot,\mu))$, for any $\mu, \mu' \in \cP_p(\sfX)$ and any $\eta \in \Gamma(\mu,\mu')$
\begin{equation}
\label{eq:regularity_fundamental}
\|\sfd_\sfY(\ff(x,\mu),\ff(x',\mu'))\|_{L^p(\eta)} \leq L \, \|\sfd_\sfX(x,x')\|_{L^p(\eta)}.
\end{equation}
\end{theorem}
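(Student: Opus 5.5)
The plan is to first prove the key estimate \eqref{eq:regularity_fundamental} on generic empirical measures, then extend it by density and use it to build $\frF$ as a Lipschitz map into $\rmT\rmL_p(\sfX;\sfY)$.

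\textbf{Step 1 (generic measures, nearby perturbations).} Fix $\aa\in\Delta_m$ generic with rational entries, so that $\mu=\frG_\aa(\xx)$ is empirical. By non-splitting, $F(\mu)=\sum_i a_i\delta_{y_i}$ for some $y_i$. By genericity, Corollary~\ref{crl:atmost_one_f_sg} shows that the transport map is uniquely determined on the atoms; write $\ff(x_i,\mu)=y_i$. Writing $F(\frG_\aa(\xx))=\frG_\aa(\yy(\xx))$ defines a map $\yy:\sfX^m\to\sfY^m$. It is continuous because $F$ and $\frG_\aa^{-1}$ on $\frG_\aa(\sfY^m)$ are continuous (Proposition~\ref{prop:super_generic_injective}).

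Now apply Proposition~\ref{proposition_local_isometry} on both sides: at $\xx$ in $\sfX^m$ and at $\yy(\xx)$ in $\sfY^m$. For $\xx'$ close enough to $\xx$, continuity of $\yy$ gives
\begin{equation*}
\sfd_{\aa,\sfY^m}(\yy(\xx),\yy(\xx'))=W_p(F(\mu),F(\mu'))\le L\,W_p(\mu,\mu')\le L\,\sfd_{\aa,\sfX^m}(\xx,\xx').
\end{equation*}
So $\yy$ is locally $L$-Lipschitz for the weighted $\ell_p$ distances, but only in the "pointed" sense: it controls distances from the base point, not all pairs in a ball.

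\textbf{Step 2 (global Lipschitz bound via the length structure).} This is where the length-space assumption enters, and I expect it to be the main obstacle. Since $\sfX$ is a length space, $(\sfX^m,\sfd_{\aa,\sfX^m})$ is also one. Given $\xx,\xx'$, I take near-optimal Lipschitz curves $x_i(t)$ from $x_i$ to $x'_i$, each with constant speed $\ell_i\le \sfd_\sfX(x_i,x'_i)+\delta$.

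Along the curve $\xx(t)$, the pointed estimate of Step 1 gives, at every $t$,
\begin{equation*}
\limsup_{s\to t}\frac{\sfd_{\aa}(\yy(\xx(s)),\yy(\xx(t)))}{|s-t|}\le L\Big(\sum_i a_i\ell_i^p\Big)^{1/p}.
\end{equation*}
A pointwise upper derivative bound for a continuous curve integrates to a global bound. Concretely, I would cover $[0,1]$ by compactness with neighbourhoods on which the pointed estimate holds, and chain through a finite partition using the triangle inequality in $\sfY^m$. Letting $\delta\to0$ yields
\begin{equation*}
\sfd_{\aa,\sfY^m}(\yy(\xx),\yy(\xx'))\le L\,\sfd_{\aa,\sfX^m}(\xx,\xx').
\end{equation*}

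The subtlety is that along the curve the points $x_i(t)$ may collide, so $\theta(\mu)$ degenerates. However, $\aa$ stays generic, so Proposition~\ref{proposition_local_isometry} still applies at every $t$ with some $\varepsilon>0$; no uniformity in $t$ is needed for the covering argument.

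\textbf{Step 3 (general couplings between generic measures).} Let $\mu=\frG_\aa(\xx)$ and $\mu'=\frG_{\aa'}(\xx')$ be generic empirical measures and $\eta\in\Gamma(\mu,\mu')$. I reduce to Step 2 by refining both measures into a common generic weight vector. Split each atom according to $\eta$, producing (after small rational perturbations of the weights) a single generic $\bb$ and points $\tilde\xx,\tilde\xx'$ such that:
\begin{itemize}
\item $\frG_\bb(\tilde\xx)\approx\mu$ and $\frG_\bb(\tilde\xx')\approx\mu'$;
\item the diagonal pairing of $\tilde\xx$ with $\tilde\xx'$ approximates $\eta$.
\end{itemize}
The perturbed measures are still empirical. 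Split atoms sit at the same location, so the values of $\ff$ there converge to the original $\ff(x_i,\mu)$ by continuity of $\frG^{-1}$ and of $F$. Passing to the limit in Step 2 then gives \eqref{eq:regularity_fundamental} for generic $\mu,\mu'$. Taking $\mu=\mu'$ shows in particular that the family $\{(\mu,\ff(\cdot,\mu))\}$ over generic $\mu$ is equidispersed with modulus $L\,r$.

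\textbf{Step 4 (extension).} Taking $\eta$ optimal in \eqref{eq:expr_TL_p} gives
\begin{equation*}
\sfd_{\rmT\rmL_p}(\frF(\mu),\frF(\mu'))\le(1+L^p)^{1/p}\,W_p(\mu,\mu')
\end{equation*}
on the dense set of generic empirical measures (Corollary~\ref{crl:dense_supergeneric}). The Lipschitz extension then follows:
\begin{itemize}
\item For $\mu_n\to\mu$, the sequence $\iota(\frF(\mu_n))$ is Cauchy in $\cP_p(\sfX\times\sfY)$ and converges. It is relatively compact with equidispersion modulus $L\,r$, so by Theorem~\ref{thm:rel_cpct_TL} the limit lies in $\cPg_p$.
\item The limit belongs to $\Gamma(\mu,F(\mu))$ by continuity of $F$ and \eqref{eq:transportlift_char}, hence defines a transport representative.
\item Estimate \eqref{eq:regularity_fundamental} passes to limits for arbitrary $\eta$ by the hemicontinuity of $\Gamma$ (Lemma~\ref{lm:Gamma_hemic}) together with Lemma~\ref{lm:couplings_sigma_eta}.
\end{itemize}
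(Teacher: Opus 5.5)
Your proposal is correct and follows essentially the paper's route. Steps 1--2 are the generic-coupling case of Proposition~\ref{prop:estimate_discrete}: local isometry of $\frG_\aa$ on both sides, curves in the length space, and a compactness/chaining argument on $[0,1]$. Step 3 is its perturbation-of-weights case, and Step 4 is the extension via equidispersivity and Theorem~\ref{thm:rel_cpct_TL}, as in Proposition~\ref{prop:extension_TL}.

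One small precision concerns Step 3. There the weights change with the perturbation, so the convergence of the atom values is not literally continuity of $\frG_\aa^{-1}$ for a fixed $\aa$. It follows instead from tightness of $F(\mu_n)$ together with uniqueness of the transport map on the generic limit $\mu$ (Corollary~\ref{crl:atmost_one_f_sg}), which is exactly how the paper argues.
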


\begin{corollary}
\label{crl:f_jointly_continuous}
Under the same assumptions as Theorem~\ref{thm:transport_lift_lipschitz} one can choose the transport representative $\ff$ which is $L$-Lipschitz on $\supp(\mu)$ for a fixed $\mu$ and such that: if $x_n \in \supp(\mu_n)$, $(x_n,\mu_n)$ converges to $(x,\mu)$ in $\sfX \times \cP_p(\sfX)$ and $x \in \supp(\mu)$ then $\ff(x_n,\mu_n)$ converges to $\ff(x,\mu)$.
\end{corollary}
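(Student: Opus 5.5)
The plan is to exploit the fundamental estimate \eqref{eq:regularity_fundamental} from Theorem~\ref{thm:transport_lift_lipschitz}, first with $\mu'=\mu$ to get the Lipschitz property on the support, and then with $\mu'=\mu_n$ and a suitable coupling to get joint continuity.

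\emph{Step 1: Lipschitz on $\supp(\mu)$.} Fix $\mu$ and let $\ff(\cdot,\mu)$ be any representative. Take $x,x'\in\supp(\mu)$ and $r>0$. We apply \eqref{eq:regularity_fundamental} with $\mu'=\mu$ and the coupling
\begin{equation*}
\eta=(\ii\times\ii)_\sharp\big(\mu\mres(\sfX\setminus(B_r\cup B'_r))\big)+\frac{1}{c}\,(\mu\mres B_r)\otimes(\mu\mres B'_r)+\frac{1}{c}\,(\mu\mres B'_r)\otimes(\mu\mres B_r),
\end{equation*}
where $B_r=B(x,r)$, $B'_r=B(x',r)$ and $r$ is small enough that the balls are disjoint (if $x\neq x'$). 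The two product terms must be balanced so that $\eta\in\Gamma(\mu,\mu)$, which is the issue of unequal masses. The fix is to use sub-balls $A\subset B_r$, $A'\subset B'_r$ with equal masses $\mu(A)=\mu(A')=c>0$; in the atomic case, masses can be trimmed using a fraction of an atom. Then \eqref{eq:regularity_fundamental} restricted to the exchanged parts gives
\begin{equation*}
\frac{1}{c}\int_{A\times A'}\sfd_\sfY(\ff(z),\ff(z'))^p \le L^p\,\frac{1}{c}\int_{A\times A'}\sfd_\sfX(z,z')^p\le L^p(\sfd_\sfX(x,x')+2r)^p.
\end{equation*}
This uses that the identity part contributes $0$ to both sides, so the inequality between the full integrals reduces to an inequality between the exchanged parts. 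That reduction is not immediate, since \eqref{eq:regularity_fundamental} only compares totals. It works here because the identity part contributes zero on each side.

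Hence the averages of $\ff$ over shrinking balls around $x$ and $x'$ are close. We redefine $\ff(x,\mu)$ on $\supp(\mu)$ as a limit (for instance through Lebesgue-point type essential limits): the estimate with $x=x'$ shows that the essential oscillation of $\ff$ on $B(x,r)\cap\supp\mu$ tends to $0$, so $\ff$ has a $\mu$-a.e.\ equal version on $\supp(\mu)$ which is $L$-Lipschitz. This version is unique since $\supp\mu$ is the closure of the points of positive-mass balls. Outside the support we keep the Borel selection; the Borel dependence on $\mu$ of the modified map still needs checking, for example by writing it as a limit of Borel averages.

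\emph{Step 2: joint continuity.} Let $x_n\in\supp\mu_n$, $x_n\to x\in\supp\mu$ and $\mu_n\to\mu$. Pick an optimal plan $\bar\eta_n\in\Gamma(\mu_n,\mu)$, and modify it with a similar swapping trick: transport a small mass $c_n$ from near $x_n$ to near $x$. Then \eqref{eq:regularity_fundamental} with $\mu'=\mu_n$ bounds the averaged distance between $\ff(\cdot,\mu_n)$ near $x_n$ and $\ff(\cdot,\mu)$ near $x$ by roughly $L(W_p(\mu_n,\mu)c^{-1/p}+\sfd(x_n,x)+2r)$. The equal-mass requirement is the main obstacle: $\mu_n(B(x_n,r))$ may be tiny. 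We would handle it by choosing the exchanged set near $x$ to be the image, under $\bar\eta_n$, of $\mu_n\mres B(x_n,r)$. Its mass is forced to be close to $\mu(B(x,r))>0$ by narrow convergence, since $\mu_n(B(x_n,2r))\ge\mu(B(x,r))-o(1)$. Combined with the uniform Lipschitz bound from Step 1, which turns averages into pointwise values, we get $\sfd_\sfY(\ff(x_n,\mu_n),\ff(x,\mu))\le C(r)+o(1)$ with $C(r)\to0$. This proves the claimed convergence, with the expected hardest part being the construction of admissible couplings with balanced masses.
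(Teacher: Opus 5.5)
Your Step~1 has a genuine gap where you pass from averaged to pointwise information. The swap coupling with $\mu'=\mu$ is fine; balancing masses with sub-measures works, and the paper simply rescales one restriction. But it only bounds the normalized average of $\sfd_\sfY(f(z),f(z'))^p$ over (subsets of) $B(x,r)\times B(x',r)$ by $L^p(\sfd_\sfX(x,x')+2r)^p$. Incidentally, your display should be normalized by $c^2$, not $c$.

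To extract a Lipschitz version you appeal to ``averages of $f$'' and ``Lebesgue-point type essential limits''. This fails for two reasons:
\begin{itemize}
\item $\sfY$ is only a Polish metric space, so $f$ cannot be averaged.
\item For an arbitrary $\mu$ on an arbitrary Polish length space there is no differentiation theorem with respect to balls. Such theorems need doubling-type assumptions and fail in general in infinite dimensions.
\end{itemize}
The case $x=x'$ does not give what you claim either: it bounds the $L^p$ mean oscillation of $f$ on $B(x,r)$ by $2Lr$, not the essential oscillation.

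The missing idea is to localize in $\sfX\times\sfY$ rather than in $\sfX$. Let $\gamma=(\ii\times f)_\sharp\mu$. For $\mu$-a.e.\ $x_0$ one has $(x_0,f(x_0))\in\supp(\gamma)$, so the sets $S_{i,r}=\{z:\ \sfd_\sfX(z,x_i)\le r,\ \sfd_\sfY(f(z),f(x_i))\le r\}$ have positive $\mu$-measure. Swapping $S_{0,r}$ with $S_{1,r}$ in \eqref{eq:regularity_fundamental} makes the left integrand uniformly within $2r$ of $\sfd_\sfY(f(x_0),f(x_1))$, and the right one within $2r$ of $\sfd_\sfX(x_0,x_1)$. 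This gives the Lipschitz inequality for $\mu$-a.e.\ pair with no differentiation theorem. The extension to $\supp(\mu)$ then follows by density and completeness of $\sfY$.

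An equivalent fix stays closer to your framework. Let $E$ be the set where $\sfd_\sfY(f(z),f(z'))>L\,\sfd_\sfX(z,z')$, and take $A,A'$ disjoint. Use the coupling made of $k\,(\mu\otimes\mu)\mres(E\cap(A\times A'))$, its transpose, and the identity on the remainder. This yields $(\mu\otimes\mu)(E)=0$, and then you conclude by Fubini.

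Granting Step~1, your Step~2 is sound and takes a different route from the paper. The paper uses the Lusin-type Theorem~\ref{thm:Gconv_TLpconv} to get G-convergence of $\ff(\cdot,\mu_n)$ on large compact sets, then picks points of those sets near $x_n$. Your route is more elementary, and in fact no swap is needed:
\begin{itemize}
\item Apply \eqref{eq:regularity_fundamental} directly to an optimal $\bar\eta_n\in\Gamma(\mu_n,\mu)$.
\item By Chebyshev, the $\bar\eta_n$-mass of pairs with $\sfd_\sfX(z,z')>r$ or $\sfd_\sfY(\ff(z,\mu_n),\ff(z',\mu))>r$ tends to $0$.
\item Pairs with $z'\in B(x,r)$ carry mass $\mu(B(x,r))>0$. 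So for large $n$ you find $z\in\supp(\mu_n)$ and $z'\in\supp(\mu)\cap B(x,r)$ with both quantities at most $r$.
\item The two Lipschitz bounds then give $\limsup_n\sfd_\sfY(\ff(x_n,\mu_n),\ff(x,\mu))\le(3L+1)r$.
\end{itemize}
This makes the unequal-mass issue you flagged disappear.
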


The assumption $x \in \supp(\mu)$ cannot be removed in the corollary, that is, $\ff$ is not necessarily continuous over $\sfX \times \cP_p(\sfX)$: see Example~\ref{ex:lift_discontinuous_supp} below.

\begin{remark}[Relaxing the assumption of length space]
\label{rmk:length_space}
The assumption that $\sfX$ be a length space can be relaxed to the requirement that its intrinsic length distance $\sfd_\ell$, defined by $\sfd_\ell(x,x') := \inf\{ \mathrm{length}(z) : z \text{ a Lipschitz curve from } x \text{ to } x' \}$, is bi-Lipschitz equivalent to $\sfd_\sfX$; that is (the inequality $\sfd_\sfX \leq \sfd_\ell$ being always true), there exists $C \geq 1$ such that $\sfd_\ell \leq C\,\sfd_\sfX$. Indeed, in this case if $F$ is $L$-Lispchitz for the Wasserstein distance built on $(\sfX,\sfd_\sfX)$, then it is $CL$-Lipschitz for the Wasserstein distance built on the length space $(\sfX,\sfd_\ell)$. 
This includes the case of compact connected subsets of $\R^d$ with sufficiently regular boundary.
\end{remark}

The proof of this theorem requires two auxiliary results that we state and prove first. The first one is an abstract result about extension of uniformly continuous functions in non-complete spaces, here the space $\rmT\rmL_p(\sfX \times \sfY)$. The key point is to use the characterization of relatively compact sets in $\rmT\rmL_p(\sfX \times \sfY)$ to bypass the non-completeness.

\begin{proposition}
\label{prop:extension_TL}
Let $\frF : A \to \rmT\rmL_p(\sfX;\sfY)$ a map defined on $A$ a dense subset of $\cP_p(\sfX)$. Then $\frF$ can be extended in a continuous map $\bar \frF : \cP_p(\sfX) \to \rmT\rmL_p(\sfX;\sfY)$ if and only if, for every $B \subseteq A$ relatively compact set of $\cP_p(\sfX)$, the following two conditions hold:
\begin{enumerate}
\item the map $\frF$, restricted to $B$, is uniformly continuous as valued in $(\rmT\rmL_p(\sfX;\sfY), \sfd_{\rmT\rmL_p})$,
\item the set $\frF(B) = \{ \frF(\mu) \ : \ \mu \in B \}$ is equidispersed. 
\end{enumerate}
\end{proposition}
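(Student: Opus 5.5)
For the direct implication, assume a continuous extension $\bar\frF$ exists and let $B \subseteq A$ be relatively compact in $\cP_p(\sfX)$. Then $\bar B$ is compact, so $\bar\frF$ is uniformly continuous on $\bar B$ by Heine--Cantor, and in particular $\frF = \bar\frF$ is uniformly continuous on $B$. Moreover $\bar\frF(\bar B)$ is compact in $\rmT\rmL_p(\sfX;\sfY)$. Theorem~\ref{thm:rel_cpct_TL} then shows that $\bar\frF(\bar B)$ is equidispersed, and hence so is its subset $\frF(B)$.

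For the converse, I would define $\bar\frF(\mu)$ for $\mu \in \cP_p(\sfX)$ as follows. Choose a sequence $(\mu_n)_{n \in \N}$ in $A$ converging to $\mu$, which exists by density, and set $B = \{\mu_n : n \in \N\}$. This set is relatively compact, so by condition 2 the set $\frF(B)$ is equidispersed.

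I then need $\iota(\frF(B))$ to be relatively compact in $\cP_p(\sfX \times \sfY)$. I would obtain this from uniform continuity together with the fact that the $\mu_n$ form a Cauchy sequence: condition 1 gives that $\iota(\frF(\mu_n))$ is Cauchy in $\cP_p(\sfX \times \sfY)$, which is complete, so it converges to some $\gamma$. Since a convergent sequence together with its limit is compact, Theorem~\ref{thm:rel_cpct_TL} applies and shows that $\frF(B)$ is relatively compact in $\rmT\rmL_p$. This forces $\gamma \in \cPg_p(\sfX \times \sfY)$, so I can set $\bar\frF(\mu) = \iota^{-1}(\gamma)$.

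I expect this step to be the main point of the proof. The space $\rmT\rmL_p$ is not complete, so Cauchy sequences need not converge, and it is exactly the equidispersivity that prevents the limit from leaving the set of graphs.

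Well-definedness comes from interleaving. Given two sequences in $A$ converging to $\mu$, merge them into a single sequence; its image is again relatively compact, so by condition 1 the two image sequences have the same limit. If $\mu \in A$, taking the constant sequence shows that $\bar\frF$ extends $\frF$.

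For continuity, let $\mu_k \to \mu$. For each $k$, pick $\nu_k \in A$ with $W_p(\nu_k,\mu_k) < 1/k$ and $\sfd_{\rmT\rmL_p}(\frF(\nu_k),\bar\frF(\mu_k)) < 1/k$, which is possible by the construction of $\bar\frF$. Then $\nu_k \to \mu$, and the previous step gives $\frF(\nu_k) \to \bar\frF(\mu)$. Consequently $\bar\frF(\mu_k) \to \bar\frF(\mu)$, which proves that $\bar\frF$ is continuous.
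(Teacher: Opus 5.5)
Your proposal is correct and follows the paper's route. The direct implication uses Heine--Cantor on $\bar B$ together with Theorem~\ref{thm:rel_cpct_TL}, and the converse extends $\iota\circ\frF$ into the complete space $\cP_p(\sfX\times\sfY)$ via Cauchy sequences, then uses equidispersivity and Theorem~\ref{thm:rel_cpct_TL} to keep the limit in $\cPg_p(\sfX\times\sfY)$. Your interleaving and diagonal arguments spell out the ``usual extension'' step that the paper only invokes; this is worth doing, since uniform continuity is assumed only on relatively compact subsets.
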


Though $\rmT\rmL_p(\sfX \times \sfY)$ is completely metrizable, note that the first point the assumption of uniform continuity is phrased in the metric $\sfd_{\rmT\rmL_p}$ which is not complete. 

\begin{proof}
Assume first that $\frF$ can be extended into a continuous map $\bar \frF$. Then, for every $B \subseteq A$ relatively compact, with $\bar{B}$ the compact closure of $B$, the map  $\bar \frF$ is uniformly continuous over $\bar B$ by Heine's theorem, thus $\frF$, restricted to $\bar B$, is uniformly continuous. This is the first point. Moreover, $\frF(B) \subseteq \bar \frF(\bar B)$ and the latter is compact as image of a compact set by a continuous map, thus $\frF(B)$ is relatively compact. The second point follows by Theorem~\ref{thm:rel_cpct_TL}. 

Conversely, assume that $\frF$ satisfies the two conditions for any relatively compact subset $B \subseteq A$. From the first point we can extend the function $\iota \circ \frF : A \to \cP_p(\sfX \times \sfY)$ into a continuous function $\frG : \cP_p(\sfX) \to \cP_p(\sfX \times \sfY)$. Indeed we simply use the usual extension of uniformly continuous functions valued in complete spaces, in particular 
\begin{equation*}
\frG(\mu) = \lim_{\mu' \to \mu, \: \mu' \in A} \iota(\frF(\mu')).
\end{equation*}
To conclude we only need to show that $\frG$ is valued in $\cPg(\sfX \times \sfY)$. Take $\mu \in \cP_p(\sfX)$ and $\mu_n$ a sequence in $A$ converging to $\mu$. As $\iota(\frF(\mu_n))$ converges, the family $(\iota(\frF(\mu_n)))_{n \in \N}$ is relatively compact in $\cP_p(\sfX \times \sfY)$. Moreover $(\frF(\mu_n))_{n \in \N}$ is equidispersed by the second assumption. By Theorem~\ref{thm:rel_cpct_TL} this family is relatively compact. We deduce that $(\frF(\mu_n))_{n \in \N}$ converges, up to extraction, to an element of $\rmT\rmL_p(\sfX;\sfY)$. As a consequence $\frG(\mu)$ belongs to $\cPg(\sfX \times \sfY)$. The conclusion follows by defining $\bar \frF = \iota^{-1} \circ \frG$.      
\end{proof}

The second result we need for the proof of our theorem is a key estimate which enables to check both conditions of Proposition~\ref{prop:extension_TL} above: it corresponds to estimate~\eqref{eq:regularity_fundamental} for generic empirical measures.

\begin{proposition}
\label{prop:estimate_discrete}
Assume $\sfX$ is a length space, 
that $F : \cP(\sfX) \to \cP(\sfY)$ is non-splitting on empirical measures and $L$-Lipschitz. Fix $\mu,\mu' \in \cP_p(\sfX)$ two generic empirical measures, $\eta \in \Gamma(\mu,\mu')$ and $f$, $f'$ two maps such that $f_\sharp \mu = F(\mu)$ and $f'_\sharp\mu' = F(\mu')$. Then 
\begin{equation*}
\| \sfd_{\sfY}(f(x),f'(x')) \|_{L^p(\eta)} \leq L \| \sfd_{\sfX}(x,x') \|_{L^p(\eta)}.
\end{equation*}
\end{proposition}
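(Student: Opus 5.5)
Since $\sfX$ is a length space, I plan to interpolate between $\mu$ and $\mu'$ along a path of generic empirical measures and chain local isometries. Write $\mu=\sum_i a_i\delta_{x_i}$ and $\mu'=\sum_j a'_j\delta_{x'_j}$, and let $\eta=\sum_{i,j}\eta_{ij}\delta_{(x_i,x'_j)}$. The first step is to reduce to the case where $\eta$ is induced by a common weight vector. Index the pairs $k=(i,j)$ with $\eta_{ij}>0$ and set $b_k=\eta_{ij}$. Then $\mu=\sum_k b_k\delta_{x_{i(k)}}$, $\mu'=\sum_k b_k\delta_{x'_{j(k)}}$, and $\eta$ is the trivial coupling of $\xx=(x_{i(k)})_k$ and $\xx'=(x'_{j(k)})_k$. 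The vector $\bb$ need not be generic. I will therefore perturb it to a generic $\tilde\bb$ (rational, by Proposition~\ref{prop:super_generic_dense}) and work with $\tilde\mu=\frG_{\tilde\bb}(\xx)$, $\tilde\mu'=\frG_{\tilde\bb}(\xx')$ and the trivial coupling $\tilde\eta$. This requires two things: $F(\tilde\mu)\to F(\mu)$ by Lipschitz continuity, and the representatives $\tilde f$ of $F(\tilde\mu)$ converging to $f$ on the atoms $x_i$. The second point holds because $\mu$ is generic: the values $\tilde f(x_{i(k)})$ stay in a compact set (tightness), and any limit point defines a map $g$ with $g_\sharp\mu=F(\mu)$, so $g=f$ on $\supp\mu$ by Corollary~\ref{crl:atmost_one_f_sg}. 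With this reduction it suffices to prove
\[
\Big(\sum_k \tilde b_k\, \sfd_\sfY(\tilde f(x_k),\tilde f'(x'_k))^p\Big)^{1/p}\le L\,\sfd_{\tilde\bb,\sfX^m}(\xx,\xx'),
\]
where $\tilde f,\tilde f'$ are representatives of $F(\tilde\mu),F(\tilde\mu')$, uniquely determined on the atoms by genericity.

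The main step is the case of a fixed generic $\aa$ and arbitrary points $\xx,\xx'\in\sfX^m$. Since $\sfX$ is a length space, for $\epsilon>0$ choose Lipschitz curves $z_k:[0,1]\to\sfX$ from $x_k$ to $x'_k$ with constant speed and $\mathrm{length}(z_k)\le \sfd_\sfX(x_k,x'_k)+\epsilon$. Then $\zz(t)=(z_k(t))_k$ is Lipschitz in $(\sfX^m,\sfd_{\aa,\sfX^m})$ with length at most $\sfd_{\aa,\sfX^m}(\xx,\xx')+\epsilon$. Set $\mu_t=\frG_\aa(\zz(t))$; every $\mu_t$ is generic with weights $\aa$. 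By the non-splitting property, applied after a rational perturbation exactly as above, or directly when $\aa$ is rational, each $F(\mu_t)=\frG_\aa(\yy(t))$ for a unique $\yy(t)\in\sfY^m$. Proposition~\ref{prop:super_generic_injective} gives continuity of $\frG_\aa^{-1}$, so $t\mapsto\yy(t)$ is continuous. Proposition~\ref{proposition_local_isometry} applies at each $F(\mu_t)$, since $F(\mu_t)$ is also a generic measure with weights $\aa$: for $s$ close to $t$ it gives $\sfd_{\aa,\sfY^m}(\yy(t),\yy(s))=W_p(F(\mu_t),F(\mu_s))\le L W_p(\mu_t,\mu_s)\le L\,\sfd_{\aa,\sfX^m}(\zz(t),\zz(s))$. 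The last inequality is \eqref{eq:estimate_trivial}. A compactness argument on $[0,1]$ (Lebesgue number) lets me choose a partition $0=t_0<\dots<t_N=1$ with these local estimates on consecutive points. Summing with the triangle inequality in $\sfd_{\aa,\sfY^m}$ gives $\sfd_{\aa,\sfY^m}(\yy(0),\yy(1))\le L(\sfd_{\aa,\sfX^m}(\xx,\xx')+\epsilon)$, and then I let $\epsilon\to0$.

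I expect the main obstacle to be the rationality bookkeeping together with the local-isometry radius. Non-splitting is only assumed for rational weights, and Proposition~\ref{proposition_local_isometry} gives a radius depending on the point, since $\theta$ degenerates when atoms of $\yy(t)$ coincide. The radius is still positive at every $t$, so compactness of $[0,1]$ with continuity of $\yy$ should handle it. For rationality I would use a generic rational $\aa$ from the start; alternatively, I would first define $F(\frG_\aa(\cdot))$ for real generic $\aa$ through the limit argument of the reduction step. I would then conclude by the perturbation $\tilde\bb\to\bb$ of the reduction step.
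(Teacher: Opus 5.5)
Your proposal is correct and follows essentially the same route as the paper. Your reduction is the paper's second step: write $\eta$ as the trivial coupling with weights $\bb$, perturb to generic rational weights, and use genericity of $\mu,\mu'$ with Corollary~\ref{crl:atmost_one_f_sg} to get convergence of the representatives on the atoms. Your main step is its first step: length-space curves, non-splitting, continuity of $\frG_\aa^{-1}$, Proposition~\ref{proposition_local_isometry} applied in $\sfY$, then compactness and the triangle inequality. Using \eqref{eq:estimate_trivial} on the $\sfX$ side is a harmless simplification.

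One small point. Proposition~\ref{proposition_local_isometry} only controls distances from the center $t$, so a bare Lebesgue-number partition does not by itself give the estimate between consecutive points. The chaining should pass through the centers, for instance by showing that $\tau^*:=\sup\{\tau\in[0,1] : \sfd_{\aa,\sfY^m}(\yy(0),\yy(\tau))\le L(\sfd_{\aa,\sfX^m}(\xx,\xx')+\epsilon)\tau\}$ equals $1$. The paper's own covering step is equally brief on this.
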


\begin{proof}
\uline{We first assume that $\eta$ is a generic empirical measure}, we write it
$\eta = \sum_{i=1}^m a_i \delta_{(x_i,x'_i)}$,
where we may have repetitions between $x_i$ and $x'_i$ but $(a_i)_{1 \leq i \leq m}$ is generic. We fix $\alpha > 0$.
For each $i$, as $\sfX$ is a length space, we can find $(x_i(t))_{t \in [0,1]}$ a curve joining $x_i$ to $x_i'$ with metric derivative bounded as $|\dot{x}_i(t)|_{\sfX} \leq \sfd_\sfX(x_i,x'_i) + \alpha$. We write $\mu(t) = \sum_{i=1}^m a_i \delta_{x_i(t)}$. With the notations of Section~\ref{sec:generic_measures} we have $\mu(t) = \frG_{\aa}(\xx(t))$ where $\xx(t) = (x_1(1), \ldots, x_m(t))$. Using~\eqref{eq:estimate_trivial} we can easily check that $t \mapsto \mu(t)$ is Lipschitz.
Moreover with Proposition~\ref{proposition_local_isometry}, we obtain that the curve $t \mapsto \mu(t)$, valued in $\cP_p(\sfX)$, has metric derivative coinciding with the one of $t \mapsto \xx(t)$ in the space $(\sfX^m, \sfd_{\aa, \sfX^m})$: for a.e. $t$
\begin{equation*}
\big|\dot\mu(t) \big|_{\cP_p(\sfX)} = \| \dot{\xx}(t) \|_{\sfX^m}  \leq \left( \sum_{i=1}^m a_i (\sfd_\sfX(x_i,x'_i) + \alpha )^p \right)^{1/p} \leq  \| \sfd_\sfX(x,x') \|_{L^p(\eta)} + \alpha.
\end{equation*}
We look at $\nu(t) = F(\mu(t))$. As $F$ is $L$-Lipschitz the curve $t \mapsto \nu(t)$ is also Lipschitz, with metric derivative satisfying a.e. 
\begin{equation*}
\big|\dot\nu(t) \big|_{\cP_p(\sfY)} \leq L \big|\dot\mu(t) \big|_{\cP_p(\sfX)} \leq L ( \| \sfd_\sfX(x,x') \|_{L^p(\eta)} + \alpha)
\end{equation*} 
Moreover $F$ is non-splitting on empirical measures, thus $\nu(t) = \sum_{i=1}^m a_i \delta_{y_i(t)}$. Moreover, as $\aa$ is generic, by Proposition~\ref{prop:super_generic_injective} we deduce that $y_i(t)$ is a continuous function of $t$. As $\nu(0) = F(\mu) = f_\sharp \mu = \sum_{i=1}^m a_i \delta_{f(x_i)}$ and $\nu(1) = F(\mu') = f'_\sharp \mu' = \sum_{i=1}^m a_i \delta_{f'(x'_i)}$ we have necessarily $y_i(0) = f(x_i)$ and $y_i(1) = f'(x_i')$. 

We write $\yy(t) = (y_1(t), y_2(t), \ldots, y_m(t))$.  
For any $t \in [0,1]$, by continuity of $t \mapsto \yy(t)$ and Proposition~\ref{proposition_local_isometry}, we can find $\varepsilon > 0$ (depending on $t$) such that if $|s-t| < \varepsilon$ then 
\begin{equation}
\label{eq:aux_technical}
\sfd_{\aa, \sfY^m}(\yy(t), \yy(s)) = W_p(\nu(t), \nu(s)) \leq L |t-s| ( \| \sfd_\sfX(x,x') \|_{L^p(\eta)} + \alpha).
\end{equation}
By compactness of $[0,1]$, we can find $t_1, \ldots t_\ell$ and $\varepsilon_1, \ldots, \varepsilon_\ell$ such that $[t_k - \varepsilon_k, t_k + \varepsilon_k]$ for $k = 1, \ldots, \ell$ is a covering of $[0,1]$ and~\eqref{eq:aux_technical} holds if $t=t_k$ and $|s- t_k| < \varepsilon_k$. Without loss of generality we can always assume $0= t_1 \leq t_2 \leq \ldots \leq t_\ell = 1$, so that $t_{k+1} - t_k < \varepsilon_k$. Then we use the triangle inequality to get the result:
\begin{align*}
\| \sfd_{\sfY}(f(x),f'(x')) \|_{L^p(\eta)}  = 
\sfd_{\aa, \sfY^m}(\yy(0), \yy(1))  & \leq 
\sum_{k=1}^{\ell - 1} \sfd_{\aa, \sfY^m}(\yy(t_k), \yy(t_{k+1}))  \\ 
& \leq L \sum_{k=1}^{\ell - 1} (t_{k+1} - t_k) ( \| \sfd_\sfX(x,x') \|_{L^p(\eta)} + \alpha) \\
& = L\| \sfd_{\sfX}(x,x') \|_{L^p(\eta)} + L \alpha.
\end{align*}
As $\alpha > 0$ is arbitrary we obtain our conclusion.

\medskip

\uline{In the case $\eta$ is not generic}, we proceed by approximation. We write again $\eta = \sum_{i=1}^m a_i \delta_{(x_i,x'_i)}$. By Proposition~\ref{prop:super_generic_dense} we can find a sequence $\aa^{(n)}$ of generic rational weights converging to $\aa$. We write $\eta_n = \sum_{i=1}^m a_{i}^{(n)} \delta_{(x_i,x'_i)}$ and $\mu_n, \mu'_n$ for the marginals of $\eta_n$. We can easily check that $\mu_n$ and $\mu'_n$ are generic empirical measures. As $F$ is non-splitting, there exist $f_n, f'_n$ such that $F(\mu_n) = {f_n}_\sharp \mu_n$ and $F(\mu'_n) = {f'_n}_\sharp \mu'_n$. From the previous case we already know that $\| \sfd_{\sfY}(f_n(x),f_n'(x')) \|_{L^p(\eta_n)} \leq L \| \sfd_{\sfX}(x,x') \|_{L^p(\eta_n)}$. As $\eta_n$ has finite support, the same as the one of $\eta$, it remains to justify that $f_n(x_i) \to f(x_i)$ and $f'_n(x'_i) \to f'(x_i)$ as $n \to + \infty$. 

By continuity of $F$, the sequences $(F(\mu_n))_{n \in \N}$ and $(F(\mu'_n))_{n \in \N}$ are tight, so that, for every $i$, the sequences $(f_n(x_i))_{n \in \N}$ and $(f'_n(x'_i))_{n \in \N}$ belong to a compact set. Let us call $y_i$ any limit point of the sequence $(f_n(x_i))_{n \in \N}$. By continuity of $F$ the map sending $x_i$ to $y_i$ for all $i$ is a transport map between $\mu$ and $F(\mu)$. As $\mu$ is generic, by Corollary~\ref{crl:atmost_one_f_sg}, necessarily $y_i = f(x_i)$. Thus there is at most one limit point, meaning the sequence $(f_n(x_i))_{n \in \N}$ converges to $f(x_i)$. A similar reasoning shows that $f'_n(x'_i) \to f'(x_i)$ as $n \to + \infty$. The conclusion follows.    
\end{proof}

\begin{proof}[Proof of Theorem~\ref{thm:transport_lift_lipschitz}]
\uline{Existence of $\frF$.}
We rely on Proposition~\ref{prop:extension_TL} with $A$ the set of generic empirical measures. 

Indeed, if $\mu \in \cP_p(\sfX)$ is a generic empirical measure, then as $F$ is non splitting we find $\ff(\cdot,\mu)$ such that $\ff(\cdot,\mu)_\sharp \mu = F(\mu)$, and by Corollary~\ref{crl:atmost_one_f_sg} there is only one such $\ff(\cdot,\mu)$. We define $\frF(\mu) = (\mu,\ff(\cdot,\mu))$ for any such $\mu$ empirical generic.

We first claim that $\frF$ is $(1+L^p)^{1/p}$-Lipschitz on $A$. Indeed, let $\mu, \mu' \in A$ and $\eta \in \Gamma(\mu,\mu')$ an optimal transport coupling between $\mu$ and $\mu'$. Then $(\pi^1 \times \ff(\pi^1,\mu) \times \pi^2 \times \ff(\pi^2,\mu'))_\sharp \eta$ is a coupling between $\iota(\frF(\mu))$ and $\iota(\frF(\mu'))$, see Lemma~\ref{lm:couplings_sigma_eta}. Thus with Proposition~\ref{prop:estimate_discrete},
\begin{align*}
\sfd_{\rmT\rmL_p}(\frF(\mu),\frF(\mu'))^p & \leq \| \sfd_\sfX(x,x') \|^p_{L^p(\eta)} + \| \sfd_\sfY(\ff(x,\mu),\ff(x',\mu')) \|^p_{L^p(\eta)} \\
& \leq (1+L^p) \| \sfd_\sfX(x,x') \|^p_{L^p(\eta)}  = (1+L^p) W_p(\mu,\mu')^p. 
\end{align*}

Moreover, the family $\frF(A)$ is equidispersed: if $\eta \in \Gamma(\mu,\mu)$ then with Proposition~\ref{prop:estimate_discrete} we have 
\begin{equation*}
\| \sfd_{\sfY}(\ff(x,\mu),\ff(x',\mu)) \|_{L^p(\eta)} \leq L \| \sfd_{\sfX}(x,x') \|_{L^p(\eta)}.   
\end{equation*}
Thus it shows that a modulus of equidispersivity for the family $\frF(A)$ is $\omega(s) = Ls$. 

We conclude with Proposition~\ref{prop:extension_TL} that $\frF$ can be extended into a Lipschitz map $\bar \frF : \cP_p(\sfX) \to \rmT\rmL_p(\sfX;\sfY)$. For any $\mu \in A$ we have $\iota(\frF(\mu)) \in \Gamma(\mu,F(\mu))$. By density, as $F$ is continuous and $\Gamma$ is hemicontinuous (Lemma~\ref{lm:Gamma_hemic}), $\iota(\bar \frF(\mu)) \in \Gamma(\mu,F(\mu))$ for all $\mu \in \cP_p(\sfX)$. It shows that $\bar \frF$ is a transport representative of $F$, see \eqref{eq:transportlift_char}.

\uline{Regularity of $\frF$.}
We have already seen that $\frF$ is $(1+L^p)^{1/p}$-Lipschitz on $A$, thus clearly its extension is $(1+L^p)^{1/p}$-Lipschitz on the closure of $A$, that is, $\cP_p(\sfX)$.
We also need to prove~\eqref{eq:regularity_fundamental}, for this we use of course Proposition~\ref{prop:estimate_discrete}. Let us fix $\mu, \mu' \in \cP_p(\sfX)$, and a coupling $\eta \in \Gamma(\mu,\mu')$.
By Corollary~\ref{crl:dense_supergeneric} we can approximate $\mu, \mu'$ by sequences $\mu_n, \mu'_n$ of generic empirical measures converging in $\cP_p(\sfX)$. By the lower hemicontinuity of $\Gamma$ (Lemma~\ref{lm:Gamma_hemic}), there exists a sequence $\eta_n \in \Gamma(\mu_n,\mu'_n)$ converging to $\eta$ in $\cP_p(\sfX \times \sfX)$. By Proposition~\ref{prop:estimate_discrete}, for each $n$ we have
\begin{equation*}
\|\sfd_\sfY(\ff(x,\mu_n),\ff(x',\mu'_n))\|_{L^p(\eta_n)} \leq L \, \|\sfd_\sfX(x,x')\|_{L^p(\eta_n)}.
\end{equation*}
The right-hand side converges to $L\,\|\sfd_\sfX(x,x')\|_{L^p(\eta)}$. For the left-hand side, by continuity of the transport representative $\mu \mapsto (\mu,\ff(\cdot,\mu))$ (Theorem~\ref{thm:transport_lift_lipschitz}) and the characterization of $\rmT\rmL_p$ convergence in Proposition~\ref{prop:char_conv_TLp}, the integrals pass to the limit. We obtain the claimed estimate.
\end{proof}

\begin{proof}[Proof of Corollary~\ref{crl:f_jointly_continuous}]
\uline{The representative $\ff$ is $L$-Lipschitz on $\supp(\mu)$}. We fix $\mu \in \cP_p(\sfX)$, choose a representative $f$ of $\ff(\cdot,\mu)$, and we write $\gamma = (\ii \times f)_\sharp \mu$. We want to prove that, up to changing the value of $f$ on a negligible set, we can make it $L$-Lipschitz. We can find a negligible set $N \subset \sfX$ with $\mu(N) = 0$ such that, if $x \notin N$ then $(x,f(x)) \in \supp(\gamma)$. Take $x_0 \neq x_1$ two points not in $N$, and fix $r \in (0,\sfd_\sfX(x_0,x_1)/2)$. For $i=0,1$ we look at $S_{i,r}$ the set of $x$ such that $\sfd_\sfX(x_i,x) \leq r$ and $\sfd_\sfY(f(x_i),f(x)) \leq r$. As $(x_i,f(x_i))$ is in the support of $\gamma$ we have $\mu(S_{i,r}) > 0$ for all $r > 0$. We consider the coupling $\eta$ which ``swaps'' $S_{0,r}$ and $S_{1,r}$: assuming $\mu(S_{0,r}) \leq \mu(S_{1,r})$, with $\mu_{i,r} = \mu \mres S_{i,r}$ the measure $\mu$ restricted to $S_{i,r}$ and $\ii : \sfX \to \sfX$ the identity map,   
\begin{equation}
\label{eq:eta_swap}
\eta = (\ii \times \ii)_\sharp \left(\mu - \mu_{0,r} - \frac{\mu(S_{0,r})}{\mu(S_{1,r})}  \mu_{1,r} \right) + \frac{1}{\mu(S_{1,r})} \mu_{0,r} \otimes \mu_{1,r} + \frac{1}{\mu(S_{1,r})} \mu_{1,r} \otimes \mu_{0,r}.
\end{equation}
Given how it is constructed, we see that $\eta \in \Gamma(\mu,\mu)$. 
We use the estimate~\eqref{eq:regularity_fundamental} with $\mu = \mu'$ and this choice of $\eta$: we obtain
\begin{align*}
\frac{2}{\mu(S_{1,r})} \iint_{S_{0,r} \times S_{1,r}} \sfd_\sfY(f(x),f(x'))^p \, \d \mu(x) \d \mu(x')  & =  \| \sfd_{\sfY}(f(x),f(x')) \|^p_{L^p(\eta)} \\
& \leq L^p \| \sfd_{\sfX}(x,x') \|^p_{L^p(\eta)} \\
& = \frac{2 L^p}{\mu(S_{1,r})} \iint_{S_{0,r} \times S_{1,r}} \sfd_\sfX(x,x')^p \, \d \mu(x) \d \mu(x').
\end{align*}
Given how the sets are constructed,  if $(x,x') \in S_{0,r} \times S_{1,r}$ then $| \sfd_\sfY(f(x),f(x')) - \sfd_\sfY(f(x_0),f(x_1)) | \leq 2 r$ and $|\sfd_\sfX(x,x') - \sfd_\sfX(x_0,x_1) | \leq 2 r$. Therefore we obtain
\begin{align*}
\sfd_\sfY(f(x_0),f(x_1))^p & = \lim_{r \to 0} \, \frac{1}{\mu(S_{0,r}) \mu(S_{1,r})} \iint_{S_{0,r} \times S_{1,r}} \sfd_\sfY(f(x),f(x'))^p \, \d \mu(x) \d \mu(x') \\
& \leq L^p \, \limsup_{r \to 0} \, \frac{1}{\mu(S_{0,r}) \mu(S_{1,r})} \iint_{S_{0,r} \times S_{1,r}} \sfd_\sfX(x,x')^p \, \d \mu(x) \d \mu(x') = L^p \sfd_\sfX(x_0,x_1)^p. 
\end{align*}
It reads $\sfd_\sfY(f(x_0),f(x_1)) \leq L \sfd_\sfX(x_0,x_1)$: as this is valid outside of $N$, that is, for $\mu$-a.e. $x_0,x_1$, up to modifying $f$ on a null set, we see that $f$ is $L$-Lipschitz on $\supp(\mu)$.

\uline{Joint continuity in $(x,\mu)$.}
Choose for $\ff(\cdot,\mu)$ a representative which is $L$-Lipschitz on $\supp(\mu)$. Then let us consider $(x_n,\mu_n)$ a sequence converging in $\sfX \times \cP_p(\sfX)$ to $(x,\mu)$ with $x_n \in \supp(\mu_n)$ and $x \in \supp(\mu)$. 
We will use Lusin's theorem for $\rmT\rmL_p(\sfX;\sfY)$ convergence (Theorem~\ref{thm:Gconv_TLpconv}) which helps us to go back to a convergence of continuous functions with compact graphs. This theorem requires bounded distances, but we can always consider bounded distances on $\sfX$ and $\sfY$ which generate the same topology. 
Fix $r>0$, and let $0 < \eps < \mu(B(x,r/2))/2$. We use Theorem~\ref{thm:Gconv_TLpconv} with $f_n = \ff(\cdot,\mu_n)$ and $f = \ff(\cdot,\mu)$: let $\mu_\eps$, $\mu_{\eps,n}$, $K_\eps$ and $K_{\eps,n}$ as in the statement of this theorem. As $\lim_{n \to + \infty} \mu_n(B(x_n,r)) \geq \mu(B(x,r/2)) > 2 \eps$, for $n$ large enough $K_{\eps,n}$ intersects $B(x_n,r)$. Let us call $x'_n$ a point in the intersection, we can always choose it in $\supp(\mu_n)$. By the $\rmG$-convergence of $\ff(\cdot,\mu_n) \restr{K_{\eps,n}}$ to $\ff(\cdot,\mu) \restr{K_{\eps}}$, up to extraction $x'_n$ converges to a limit $x' \in K_\eps$ and $\ff(x'_n,\mu_n)$ converges to $\ff(x',\mu)$. As necessarily $x' \in B(x,r)$ we see by Lipschitzianity that 
\begin{align*}
\sfd_\sfY(\ff(x_n,\mu_n),\ff(x,\mu)) & \leq \sfd_\sfY(\ff(x_n,\mu_n),\ff(x'_n,\mu_n))  + \sfd_\sfY(\ff(x'_n,\mu_n),\ff(x',\mu)) + \sfd_\sfY(\ff(x',\mu),\ff(x,\mu))   \\
& \leq Lr + \sfd_\sfY(\ff(x'_n,\mu_n),\ff(x',\mu)) + Lr,
\end{align*}
and the second term converges to $0$ as $n \to + \infty$. 
Thus we have proven: for every $r > 0$, there exists an extraction $k \mapsto n_k$ such that 
\begin{equation*}
\limsup_{k \to + \infty} \sfd_\sfY(\ff(x_{n_k},\mu_{n_k}),\ff(x,\mu)) \leq 2Lr. 
\end{equation*}
This is enough to conclude that $\ff(x_n,\mu_n)$ converges to $\ff(x,\mu)$: if it were not the case then along a subsequence $\sfd_\sfY(\ff(x_n,\mu_n),\ff(x,\mu))$ would converge to a strictly positive limit.    
\end{proof}

We now prove our second main result as it was stated in the introduction, it is an easy consequence of what is above.

\begin{proof}[\textbf{Proof of Theorem~\ref{thm:main_Lipschitz}}]
This is an easy consequence of Theorem~\ref{thm:transport_lift_lipschitz} and Corollary~\ref{crl:f_jointly_continuous}. The only additional thing we have to justify is that we can choose $\ff$ jointly Borel on $\sfX \times \cP_p(\sfX)$. We define $\ff(\cdot,\mu)$ on $\supp(\mu)$ as the representative which is $L$-Lipschitz; while if $x \notin \supp(\mu)$ we define $\ff(x,\mu) = y_0$ for some fix $y_0 \in \sfY$.

As already noted in \cite{fornasier2023density} at least for $\sfX = \R^d$, the set $\{ (x,\mu) \ : \ x \in \supp(\mu)  \}$ is a $G_\delta$, in particular Borel, and dense subset of $\sfX \times \cP_p(\sfX)$. Let us give a self-contained argument valid for any Polish space $\sfX$. For every $r > 0$, the map $(x,\mu) \mapsto \mu(B(x,r))$ is lower semicontinuous on $\sfX \times \cP_p(\sfX)$. Indeed, if $x_n \to x$ and $\mu_n \to \mu$, then for every $s \in (0,r)$ the ball $B(x_n,r)$ eventually contains $B(x,s)$, so that
\begin{equation*}
\liminf_{n \to + \infty} \mu_n(B(x_n,r)) \geq \liminf_{n \to + \infty} \mu_n(B(x,s)) \geq \mu(B(x,s)),
\end{equation*}
where the last inequality follows the lower semicontinuity of $\nu \mapsto \nu(U)$ on open sets $U$ for the topology of narrow convergence. Letting $s \to r$ yields $\liminf_{n \to + \infty} \mu_n(B(x_n,r)) \geq \mu(B(x,r))$. Using the characterization $x \in \supp(\mu)$ if and only if $\mu(B(x,r)) > 0$ for all $r > 0$, we get
\begin{equation*}
\{ (x,\mu) \ : \ x \in \supp(\mu) \} = \bigcap_{n \geq 1} \{ (x,\mu) \ : \ \mu(B(x,1/n)) > 0 \},
\end{equation*}
a countable intersection of open sets, hence a $G_\delta$. Density follows from the fact that, for every $(x,\mu) \in \sfX \times \cP_p(\sfX)$ and every $\alpha \in (0,1]$, the measure $(1-\alpha)\mu + \alpha\delta_x$ contains $x$ in its support and converges to $\mu$ in $\cP_p(\sfX)$ as $\alpha \to 0$.

The conclusion follows: the function $\ff$ is Borel as it coincides with a continuous function on a Borel set (by Corollary~\ref{crl:f_jointly_continuous}), and is constant on the complement of this Borel set. 
\end{proof}

\begin{example}
\label{ex:lift_discontinuous_supp}
Corollary~\ref{crl:f_jointly_continuous} shows that $\ff(x_n,\mu_n) \to \ff(x,\mu)$ as soon as $(x_n,\mu_n) \to (x,\mu)$ and $x_n \in \supp(\mu_n)$, $x \in \supp(\mu)$. We show that the latter condition cannot be removed: the unique continuous transport representative cannot necessarily be chosen continuous uniformly over $\sfX \times \cP_p(\sfX)$.

We take $\sfX = \sfY = \R$ and $p=1$. We choose $\ff(\cdot,\mu)$ linear: specifically for $\mu \neq \delta_0$ we define
\begin{equation*}
s(\mu) = \frac{\int x \d \mu(x)}{\int |x| \d \mu(x)} = \frac{\int x \d \mu(x)}{W_1(\mu,\delta_0)},
\end{equation*}
and then $\ff(x,\mu) = s(\mu) x$ as well as $F(\mu) = \ff(\cdot,\mu)_\sharp \mu$. If $\mu = \delta_0$ we simply define $\ff(\cdot,\delta_0) = 0$, thus $F(\delta_0) = \delta_0$. Note that the map $s : \cP_1(\sfX) \to \R$ is discontinuous at $\delta_0$: indeed $s(\delta_{x}) = \mathrm{sgn}(x)$. Nevertheless we will show that $F$ is Lipschitz. 

\uline{The map $F$ is $3$-Lipschitz}. Take $\mu, \mu' \in \cP_1(\sfX)$. Using the easy estimates~\eqref{eq:Wp_Lp} and~\eqref{eq:lift_lipschitz} (as $\ff(\cdot,\mu)$ is always $1$-Lipschitz), we have 
\begin{align*}
W_1(F(\mu),F(\mu'))  = W_1(\ff(\cdot,\mu)_\sharp \mu, \ff(\cdot,\mu')_\sharp \mu') & \leq W_1(\ff(\cdot,\mu)_\sharp \mu, \ff(\cdot,\mu)_\sharp \mu') + W_1(\ff(\cdot,\mu)_\sharp \mu', \ff(\cdot,\mu')_\sharp \mu') \\
& \leq W_1(\mu,\mu') + \| \ff(\cdot,\mu) - \ff(\cdot,\mu') \|_{L^1(\mu')} \\
& = W_1(\mu,\mu') + |s(\mu) - s(\mu')| \int |x| \d \mu'(x) \\
& = W_1(\mu,\mu') + W_1(\mu',\delta_0) |s(\mu) - s(\mu')|. 
\end{align*}
Next we decompose:
\begin{align*}
W_1(\mu',\delta_0) (s(\mu) - s(\mu')) & = \frac{W_1(\mu',\delta_0)}{W_1(\mu,\delta_0)} \int x \d \mu(x) - \int x \d \mu'(x) \\
& = \frac{W_1(\mu',\delta_0) - W_1(\mu,\delta_0)}{W_1(\mu,\delta_0)} \int x \d \mu(x) + \int x \d (\mu - \mu')(x). 
\end{align*}
As $\nu \mapsto W_1(\nu,\delta_0)$ and $\nu \mapsto \int x \d \nu(x)$ are both $1$-Lipschitz with respect to $W_1$, we see that 
\begin{equation*}
W_1(\mu',\delta_0) |s(\mu) - s(\mu')|  \leq W_1(\mu,\mu') \frac{\left|\int x \d \mu(x) \right|}{W_1(\mu,\delta_0)} + W_1(\mu,\mu') \leq 2 W_1(\mu,\mu').
\end{equation*}
Plugging in our previous estimate, we have $W_1(F(\mu),F(\mu')) \leq 3 W_1(\mu,\mu')$, which is our claim.

\uline{The transport representative cannot be chosen continuous over $\sfX \times \cP_p(\sfX)$}.
Let us define
\begin{equation*}
\mu^+_n = \frac{1}{n} \delta_1 + \left( 1- \frac{1}{n} \right) \delta_{1/\sqrt{n}}, \qquad \mu^-_n = \frac{1}{n} \delta_1 + \left( 1- \frac{1}{n} \right) \delta_{-1/\sqrt{n}}.    
\end{equation*}
These two empirical measures converge to $\delta_0$ and are generic for $n \geq 3$, with $s(\mu^+_n) = 1$ while $s(\mu^-_n) \to - 1$. Recalling that the value of any continuous transport representative is unique on generic measures (Corollary~\ref{crl:atmost_one_f_sg}), we deduce that if $\ff$ is any continuous transport representative of $F$ then $\ff(1,\mu^+_n) = 1$ while $\ff(1,\mu^-_n) \to -1$. That proves $\ff$ cannot be extended into a continuous function at $(1,\delta_0)$. 

Nevertheless it does not contradict Corollary~\ref{crl:f_jointly_continuous} as $1 \notin \supp(\delta_0) = \lim_n \mu^\pm$.
\end{example}

Corollary~\ref{crl:f_jointly_continuous} establishes the joint continuity of $\ff$ at points $(x,\mu)$ with $x \in \supp(\mu)$, but gives no quantitative information on the modulus of continuity with respect to $\mu$. We now complement this result with a quantitative stability estimate that shows: when $\mu, \mu'$ are bounded below by a fixed reference measure $\lambda$, the map $\ff(\cdot,\mu)$ varies continuously in $\mu$ in the supremum norm on $\supp(\lambda)$, with a modulus of continuity governed by how the mass $\lambda(B(x,r))$ behaves as $r \to 0$. The proof is a variant of the argument used in
the first part of the proof of Corollary~\ref{crl:f_jointly_continuous} which leverages the estimate~\eqref{eq:regularity_fundamental}.

\begin{lemma}
\label{lm:uniform_continuity_supp}
Assume $\sfX$ is a length space and $F : \cP_p(\sfX) \to \cP_p(\sfY)$ is non-splitting on empirical measures and $L$-Lipschitz. Let $\ff$ be the transport representative of $F$ given by Theorem~\ref{thm:transport_lift_lipschitz}. Let $\lambda \in \cM(\sfX)$ be a finite positive Borel measure.

Then for every $\mu, \mu' \in \cP_p(\sfX)$ with $\mu \geq \lambda$ and $\mu' \geq \lambda$ (in particular $\lambda$ has finite $p$-moment) and every $r > 0$,
\begin{equation*}
\sfd_\sfY(\ff(x,\mu),\ff(x,\mu'))^p \leq C(p) \, L^p \left( r^p + \frac{W_p(\mu,\mu')^p}{\lambda(B(x,r))} \right) \qquad \text{for every } x \in \supp(\lambda),
\end{equation*}
where $C(p)$ is a constant depending only on $p$.
\end{lemma}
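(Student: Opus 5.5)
The plan is to apply the estimate~\eqref{eq:regularity_fundamental} with a coupling $\eta \in \Gamma(\mu,\mu')$ that is built from an optimal plan but also contains a piece of ``identity-like'' coupling localized near $x$, in the spirit of the swap coupling~\eqref{eq:eta_swap}. We use the representatives given by Corollary~\ref{crl:f_jointly_continuous}, so that $\ff(\cdot,\mu)$ and $\ff(\cdot,\mu')$ are $L$-Lipschitz on $\supp(\mu)$ and $\supp(\mu')$. Both supports contain $\supp(\lambda)$, since $\mu,\mu'\ge\lambda$. Fix $x\in\supp(\lambda)$ and $r>0$, and write $B=B(x,r)$ and $m=\lambda(B)>0$.

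The key step is the pointwise comparison. For $z\in B\cap\supp(\lambda)$ and any $z'$, the triangle inequality and the Lipschitz bounds give
\begin{equation*}
\sfd_\sfY(\ff(x,\mu),\ff(x,\mu'))\le 2Lr + L\,\sfd_\sfX(z,z') + \sfd_\sfY(\ff(z,\mu),\ff(z',\mu')) + L\,\sfd_\sfX(z',x) ,
\end{equation*}
where the term $L\,\sfd_\sfX(z',x)$ is used only when $z'\in\supp(\mu')$ and is controlled by $L(r+\sfd_\sfX(z,z'))$. A cleaner version: $\ff(x,\mu)$ is within $Lr$ of $\ff(z,\mu)$, and $\ff(x,\mu')$ is within $L(r+\sfd_\sfX(z,z'))$ of $\ff(z',\mu')$. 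Hence
\begin{equation*}
\sfd_\sfY(\ff(x,\mu),\ff(x,\mu'))\le 2Lr+L\,\sfd_\sfX(z,z')+\sfd_\sfY(\ff(z,\mu),\ff(z',\mu')).
\end{equation*}
Take $\eta$ an optimal plan between $\mu$ and $\mu'$. Raise this inequality to the power $p$ (convexity gives a factor $3^{p-1}$), then integrate against $\eta$ restricted to $\{z\in B\}$, whose mass is at least $\mu(B)\ge m$. Dividing by that mass and using~\eqref{eq:regularity_fundamental} to bound the last term by $L^p W_p(\mu,\mu')^p$, we get
\begin{equation*}
\sfd_\sfY(\ff(x,\mu),\ff(x,\mu'))^p\le 3^{p-1}\Big((2Lr)^p+\frac{2L^pW_p(\mu,\mu')^p}{m}\Big),
\end{equation*}
which is the claim with $C(p)=3^{p-1}\cdot 2^p$.

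The main obstacle is that the Lipschitz bounds hold only on the supports. We need $\eta$-a.e.\ $(z,z')$ to satisfy $z\in\supp(\mu)$ and $z'\in\supp(\mu')$, which is automatic, and we need the representatives to be chosen consistently with the $\rmT\rmL_p$ class used in~\eqref{eq:regularity_fundamental}. Both are fine because the Lipschitz representative agrees with $\ff(\cdot,\mu)$ $\mu$-a.e. There is one additional worry: the Lipschitz property near $x$ was applied at $z\in B$, which lies in $\supp(\mu)$ $\mu$-a.e. Also, $x\in\supp(\lambda)\subseteq\supp(\mu)$, so the Lipschitz bound at $x$ itself is legitimate. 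If instead one wants the sharper form with only $\lambda$-mass, restrict the integration to the part of $\eta$ whose first marginal is $\lambda\mres B$. This is possible since $\mu\ge\lambda$ lets us disintegrate $\eta$ and keep the sub-plan with first marginal $\lambda\mres B$, whose cost is still at most $W_p(\mu,\mu')^p$.
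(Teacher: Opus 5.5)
Your argument is correct, but it takes a different and shorter route than the paper. The paper does not use the optimal plan $\eta_0$ directly. With $S=B(x,r)$, it splits $\eta_0$ using the densities $\frac{\d\lambda}{\d\mu}\mathbf{1}_S$ and $\frac{\d\lambda}{\d\mu'}\mathbf{1}_S$, and re-glues the displaced pieces with an optimal plan between the resulting marginals $\nu_2,\nu_1$. This produces a plan in $\Gamma(\mu,\mu')$ that contains the diagonal piece $(\ii\times\ii)_\sharp(\tfrac12\lambda\mres S)$ and costs at most $3W_p(\mu,\mu')^p$. Then~\eqref{eq:regularity_fundamental} directly gives the same-point bound $\int_S \sfd_\sfY(\ff(z,\mu),\ff(z,\mu'))^p\,\d\lambda(z)\le 6L^pW_p(\mu,\mu')^p$. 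Lipschitz continuity is then used only at distances $<r$, to localize at $x$.

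You instead keep the optimal plan (your $\eta$, the paper's $\eta_0$) and absorb the displacement $z\mapsto z'$ through the Lipschitz bound of $\ff(\cdot,\mu')$ on $\supp(\mu')$, namely $\sfd_\sfY(\ff(z',\mu'),\ff(x,\mu'))\le L(r+\sfd_\sfX(z,z'))$. This is legitimate because $x\in\supp(\lambda)\subseteq\supp(\mu')$ and $z'\in\supp(\mu')$ for $\eta$-a.e.\ $(z,z')$, and it costs only an extra $L^pW_p(\mu,\mu')^p$.

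Your route buys simplicity and generality. The hypothesis $\mu,\mu'\ge\lambda$ is used only to place $x$ in $\supp(\mu)\cap\supp(\mu')$ and to get $\mu(B(x,r))\ge\lambda(B(x,r))$. So the same proof gives the bound with $\max\{\mu(B(x,r)),\mu'(B(x,r))\}$ in the denominator, for every $x\in\supp(\mu)\cap\supp(\mu')$. The paper's construction buys the integrated same-point estimate on $\lambda\mres S$, obtained from~\eqref{eq:regularity_fundamental} alone without the pointwise Lipschitz representative.

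A few minor clean-ups:
\begin{itemize}
\item Drop your first display and keep only the ``cleaner version''.
\item The pointwise inequality is needed for $z\in B\cap\supp(\mu)$, not $z\in B\cap\supp(\lambda)$, since you integrate against $\eta$ restricted to $\{z\in B\}$, whose first marginal is $\mu\mres B$. It does hold there, because it only uses $x,z\in\supp(\mu)$.
\item Your closing remark about a ``sharper form'' is superfluous: since $\mu(B)\ge\lambda(B)$, the bound you already have is at least as strong as the stated one.
\end{itemize}
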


\begin{proof}
Fix $x_0 \in \supp(\lambda)$ and $r > 0$, and set $S := B(x_0,r)$. Since $x_0 \in \supp(\lambda)$, $\lambda(S) > 0$.

\uline{Construction of a coupling.} Let $\eta_0 \in \Gamma(\mu,\mu')$ be an optimal transport plan, so that $\|\sfd_\sfX(x,x')\|_{L^p(\eta_0)} = W_p(\mu,\mu')$. The assumption $\mu \geq \lambda$ implies that the density $\frac{\d \lambda}{\d \mu}$ is well defined and bounded by $1$. We set
\begin{equation*}
a(x) := \frac{\d \lambda}{\d \mu}(x) \, \mathbf{1}_S(x), \qquad a'(x') := \frac{\d \lambda}{\d \mu'}(x') \, \mathbf{1}_S(x'),
\end{equation*}
so that $0 \leq a, a' \leq 1$ and $a\,\mu = \lambda\mres S = a'\,\mu'$. We then split $\eta_0$ as
\begin{equation*}
\eta_1 := \tfrac{1}{2}\, a(x)\,\eta_0, \qquad \eta_2 := \tfrac{1}{2}\, a'(x')\,\eta_0, \qquad \eta_3 := \eta_0 - \eta_1 - \eta_2,
\end{equation*}
which is a non-negative decomposition since $\frac{1}{2}(a(x) + a'(x')) \leq 1$. By construction, the marginals of $\eta_1$ are
\begin{equation*}
\pi^1_\sharp \eta_1 = \tfrac{1}{2}\,\lambda \mres S, \qquad \pi^2_\sharp \eta_1 =: \nu_1,
\end{equation*}
and the marginals of $\eta_2$ are
\begin{equation*}
\pi^1_\sharp \eta_2 =: \nu_2, \qquad \pi^2_\sharp \eta_2 = \tfrac{1}{2}\,\lambda \mres S.
\end{equation*}
Both $\nu_1$ and $\nu_2$ have the same total mass $\frac{1}{2}\lambda(S)$. Let $\gamma \in \Gamma(\nu_2,\nu_1)$ be an optimal transport plan between them, with $\| \sfd_\sfX(x,x') \|_{L^p(\gamma)}^p  = W_p(\nu_2,\nu_1)^p$.

We then define
\begin{equation}
\label{eq:eta_construction_lemma}
\eta := \eta_3 + \gamma + (\ii \times \ii)_\sharp \big( \tfrac{1}{2}\,\lambda \mres S \big).
\end{equation}
A direct computation shows that the first marginal of $\eta$ is
\begin{equation*}
\pi^1_\sharp \eta_3 + \pi^1_\sharp \gamma + \tfrac{1}{2}\lambda\mres S = (\mu - \nu_2 - \tfrac{1}{2}\lambda\mres S) + \nu_2 + \tfrac{1}{2}\lambda\mres S = \mu,
\end{equation*}
and analogously $\pi^2_\sharp \eta = \mu'$. Thus $\eta \in \Gamma(\mu,\mu')$.

\uline{Cost estimate.} We bound $\| \sfd_\sfX(x,x') \|_{L^p(\eta)}^p $ in the three terms. For $\eta_3$, since $\eta_3 \leq \eta_0$,
\begin{equation*}
\int \sfd_\sfX(x,x')^p \, \d\eta_3(x,x') \leq \int \sfd_\sfX(x,x')^p \, \d\eta_0(x,x') = W_p(\mu,\mu')^p.
\end{equation*}
The diagonal contribution $(\ii \times \ii)_\sharp(\frac{1}{2}\lambda\mres S)$ gives zero. For $\gamma$, by the triangle inequality in $W_p$, using $\eta_2$ as a coupling between $\nu_2$ and $\frac{1}{2}\lambda\mres S$ and $\eta_1$ as a coupling between $\frac{1}{2}\lambda\mres S$ and $\nu_1$,
\begin{align*}
W_p(\nu_2,\nu_1) \leq W_p(\nu_2, \tfrac{1}{2}\lambda\mres S) + W_p(\tfrac{1}{2}\lambda\mres S, \nu_1) & \leq \|\sfd_\sfX(x,x')\|_{L^p(\eta_2)} + \|\sfd_\sfX(x,x')\|_{L^p(\eta_1)}  \\
& \leq 2 \|\sfd_\sfX(x,x')\|_{L^p(\eta_0)} = 2 W_p(\mu,\mu').
\end{align*}
Combining,
\begin{equation*}
\int \sfd_\sfX(x,x')^p \, \d\eta(x,x') \leq 3 \, W_p(\mu,\mu')^p.
\end{equation*}

\uline{Application of the key estimate.} With~\eqref{eq:regularity_fundamental},
\begin{equation*}
\|\sfd_\sfY(\ff(x,\mu),\ff(x',\mu'))\|_{L^p(\eta)} \leq L \, \|\sfd_\sfX(x,x')\|_{L^p(\eta)} \leq 3 \, L \, W_p(\mu,\mu').
\end{equation*}
The diagonal contribution to the left-hand side gives, in particular,
\begin{equation*}
\int_S \sfd_\sfY(\ff(x,\mu),\ff(x,\mu'))^p \, \tfrac{1}{2}\,\d\lambda(x) \leq 3 \, L^p \, W_p(\mu,\mu')^p.
\end{equation*}

\uline{Localization at $x_0$.} Since $\mu, \mu' \geq \lambda$, we have $\supp(\lambda) \subseteq \supp(\mu) \cap \supp(\mu')$, so in particular $x_0 \in \supp(\mu) \cap \supp(\mu')$. By Corollary~\ref{crl:f_jointly_continuous}, 
$\ff(\cdot,\mu)$ and $\ff(\cdot,\mu')$ are $L$-Lipschitz on the supports of $\mu$ and $\mu'$ respectively. For any $x \in S \cap \supp(\lambda) \subseteq \supp(\mu) \cap \supp(\mu')$ we have
\begin{equation*}
\sfd_\sfY(\ff(x_0,\mu),\ff(x_0,\mu')) \leq \sfd_\sfY(\ff(x,\mu),\ff(x,\mu')) + L\,\sfd_\sfX(x_0,x) + L\,\sfd_\sfX(x_0,x) \leq \sfd_\sfY(\ff(x,\mu),\ff(x,\mu')) + 2Lr.
\end{equation*}
Raising to the $p$-th power and integrating against $\frac{1}{2}\lambda \mres S$ (which has mass $\frac{1}{2}\lambda(S) > 0$), then dividing by this mass:
\begin{equation*}
\sfd_\sfY(\ff(x_0,\mu),\ff(x_0,\mu'))^p \leq 2^{p-1} \left( \frac{1}{\frac{1}{2}\lambda(S)} \int_S \sfd_\sfY(\ff(x,\mu),\ff(x,\mu'))^p \, \tfrac{1}{2}\,\d\lambda(x) + (2Lr)^p \right).
\end{equation*}
Combining with the previous bound,
\begin{equation*}
\sfd_\sfY(\ff(x_0,\mu),\ff(x_0,\mu'))^p \leq C(p) \, L^p \left( r^p + \frac{W_p(\mu,\mu')^p}{\lambda(B(x_0,r))} \right),
\end{equation*}
which is the claimed estimate.
\end{proof}

The right-hand side of the estimate in Lemma~\ref{lm:uniform_continuity_supp} depends on $r$. To obtain a quantitative rate as $W_p(\mu,\mu') \to 0$ one needs an assumption on the local behavior of $\lambda(B(\cdot,r))$ as $r \to 0$, uniform in $x \in \supp(\lambda)$. A typical instance is the following: one should think $\sfX$ is a bounded convex (or regular) subset with nonempty interior in an Euclidean space of dimension $k$ and $\lambda$ is (a scalar multiple of) the Lebesgue measure.
Another interesting example is provided by a compact Riemannian manifold $\sfX$ endowed with the Riemannian volume measure $\lambda$.

\begin{corollary}
\label{crl:holder_uniform}
Under the assumptions of Lemma~\ref{lm:uniform_continuity_supp}, assume moreover that there exist $c > 0$ and $k > 0$ such that
\begin{equation*}
\lambda(B(x,r)) \geq c \, r^k \qquad \text{for every } x \in \supp(\lambda) \text{ and every } r \in (0,1].
\end{equation*}
Then for every $\mu, \mu' \in \cP_p(\sfX)$ with $\mu,\mu' \geq \lambda$,
\begin{equation*}
\sup_{x \in \supp(\lambda)} \sfd_\sfY(\ff(x,\mu),\ff(x,\mu')) \leq C(p,k,c) \, L \, W_p(\mu,\mu')^{p/(p+k)},
\end{equation*}
provided $W_p(\mu,\mu')$ is small enough. In particular, $\ff(\cdot,\mu_n) \to \ff(\cdot,\mu)$ uniformly on $\supp(\lambda)$ whenever $\mu_n \to \mu$ in $\cP_p(\sfX)$ with $\mu_n, \mu \geq \lambda$.
\end{corollary}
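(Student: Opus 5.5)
We plug the lower bound on $\lambda(B(x,r))$ into Lemma~\ref{lm:uniform_continuity_supp} and optimize in $r$. Write $W := W_p(\mu,\mu')$ and fix $x \in \supp(\lambda)$. For any $r \in (0,1]$, the lemma combined with $\lambda(B(x,r)) \ge c r^k$ gives
\begin{equation*}
\sfd_\sfY(\ff(x,\mu),\ff(x,\mu'))^p \leq C(p)\, L^p \left( r^p + \frac{W^p}{c\, r^k} \right).
\end{equation*}
The right-hand side no longer depends on $x$. So the same bound holds for the supremum over $x \in \supp(\lambda)$, and only the choice of $r$ remains.

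We balance the two terms by taking $r^{p+k} = W^p$, that is, $r = W^{p/(p+k)}$. This choice is admissible, i.e.\ $r \le 1$, exactly when $W \le 1$; this is the meaning of ``provided $W_p(\mu,\mu')$ is small enough''. With this $r$ both terms equal $W^{p^2/(p+k)}$ up to the factor $1/c$, so
\begin{equation*}
\sup_{x\in\supp(\lambda)} \sfd_\sfY(\ff(x,\mu),\ff(x,\mu'))^p \le C(p)\,L^p\,(1+c^{-1})\, W^{p^2/(p+k)}.
\end{equation*}
Taking $p$-th roots yields the claimed bound $C(p,k,c)\, L\, W^{p/(p+k)}$ with $C(p,k,c) = (C(p)(1+c^{-1}))^{1/p}$. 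If $W = 0$, then $\mu = \mu'$ and there is nothing to prove.

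For the final uniform convergence statement, let $\mu_n \to \mu$ in $\cP_p(\sfX)$ with $\mu_n, \mu \ge \lambda$. Then $W_p(\mu_n,\mu) \to 0$, so for $n$ large it is at most $1$, and the estimate applies with $\mu' = \mu_n$. The right-hand side tends to $0$, which gives uniform convergence on $\supp(\lambda)$. Here we use the representative of $\ff$ that is $L$-Lipschitz on supports (Corollary~\ref{crl:f_jointly_continuous}), as in the lemma; it is defined pointwise on $\supp(\lambda) \subseteq \supp(\mu)\cap\supp(\mu_n)$.

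There is no real obstacle. The one point to check is that the constraint $r \le 1$ in the volume hypothesis is compatible with the optimal $r$, and this is handled by the smallness assumption $W \le 1$.
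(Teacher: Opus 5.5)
Your proof is correct and follows the paper's argument: substitute $\lambda(B(x,r)) \geq c\,r^k$ into Lemma~\ref{lm:uniform_continuity_supp} and balance the two terms by the choice of $r$. The only difference is cosmetic. The paper takes $r = (W_p(\mu,\mu')^p/c)^{1/(p+k)}$, which is admissible when $W_p(\mu,\mu')^p \leq c$. Your choice $r = W_p(\mu,\mu')^{p/(p+k)}$ makes the smallness condition explicit as $W_p(\mu,\mu') \leq 1$, independent of $c$.
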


\begin{proof}
Optimize the estimate of Lemma~\ref{lm:uniform_continuity_supp} in $r$: substituting $\lambda(B(x,r)) \geq c r^k$ and choosing $r := (W_p(\mu,\mu')^p/c)^{1/(p+k)}$ makes the two terms $r^p$ and $W_p(\mu,\mu')^p/(c r^k)$ comparable, and yields the Hölder estimate with exponent $p/(p+k)$ after taking the $p$-th root.
\end{proof}

\subsection{Universal approximation}
\label{sec:approximation}

We conclude this section with the proof of the approximation result (Theorem~\ref{thm:approximation}): any Lipschitz non-splitting transformation $F$ can be approximated, in Wasserstein distance and uniformly in $\mu$, by the push-forward by elements of any sufficiently rich family $\cA$ of continuous mappings $\sfX \times \cP(\sfX) \to \sfY$.

We emphasize why the approximation is formulated in Wasserstein distance at the level of the map $F$, rather than as a uniform approximation of $\ff$ itself by elements of $\cA$. Example~\ref{ex:lift_discontinuous_supp} exhibits an $L$-Lipschitz non-splitting transformation $F$ whose transport representative $\ff$ cannot be extended to a jointly continuous mapping on $\sfX \times \cP_p(\sfX)$: in that example, $\ff$ has an unavoidable discontinuity at $(1,\delta_0)$, with oscillation bounded below by $2$. If $\cA$ is a family of jointly continuous mappings (as is the case for transformer architectures and, more generally, for any natural parametric family of neural networks), no element of the uniform closure of $\cA$ can approximate $\ff$ uniformly on $\sfX \times \cP_p(\sfX)$. The Wasserstein formulation of Theorem~\ref{thm:approximation} circumvents this obstruction: since the equality $F(\mu) = \ff(\cdot,\mu)_\sharp \mu$ only depends on the values of $\ff(\cdot,\mu)$ on $\supp(\mu)$, approximating $F(\mu)$ in Wasserstein distance is a genuinely weaker (but natural) requirement which is compatible with the pointwise discontinuities of $\ff$ at points $(x,\mu)$ with $x \notin \supp(\mu)$.

\begin{proof}[\textbf{Proof of Theorem~\ref{thm:approximation}}]
We fix $\varepsilon > 0$.
Let $\ff$ be the transport representative of $F$ given by Theorem~\ref{thm:transport_lift_lipschitz}, satisfying the joint continuity property of Corollary~\ref{crl:f_jointly_continuous}. Since $\sfX$ is a Polish space, we can fix a probability measure $\lambda \in \cP(\sfX)$ with full support: for instance, if $\{x_n\}_{n \geq 1}$ is a countable dense subset of $\sfX$, the measure $\lambda := \sum_{n \geq 1} 2^{-n}\,\delta_{x_n}$ has $\supp(\lambda) = \sfX$.

For a function $\alpha : \cP_p(\sfX) \to (0,1]$ we set
% For every $\alpha \in (0,1]$ and $\mu \in \cP_p(\sfX)$ we set
\begin{equation*}
\mu_\alpha := (1-\alpha(\mu)) \mu + (\mu) \lambda, \qquad \ff_\alpha(x,\mu) := \ff(x,\mu_\alpha).
\end{equation*}
Note that $\mu_\alpha \geq \alpha(\mu)\lambda$, and since $\supp(\lambda) = \sfX$, we have $\supp(\mu_\alpha) = \sfX$ for every $\mu \in \cP(\sfX)$. Hence $\ff(\cdot,\mu_\alpha) = \ff_\alpha(\cdot,\mu)$ is $L$-Lipschitz on all of $\sfX$ by Corollary~\ref{crl:f_jointly_continuous}.

\uline{Choice of $\alpha$.}
By convexity of $W^p_p$ we have $W_p^p(\mu_\alpha,\mu) \leq \alpha(\mu)W_p^p(\mu,\lambda)$. We choose 
\begin{equation*}
    \alpha(\mu) = \min \left( 1, \frac{\varepsilon}{3 L W_p(\mu,\lambda)}   \right)^p,
\end{equation*}
which makes it a continuous function on $\cP_p(\sfX)$, and which also ensures that
\begin{equation}
\label{eq:aux_mu_mutheta}
W_p(\mu,\mu_\alpha) \leq \frac{\varepsilon}{3L} \qquad \text{for all } \mu \in \cP_p(\sfX).
\end{equation}

\uline{Continuity and Lipschitzianity of $\ff_\alpha$.} We claim that $\ff_\alpha : \sfX \times \cP(\sfX) \to \sfY$ is jointly continuous and $L$-Lipschitz in the first variable. The $L$-Lipschitzianity in $x$ has already been noted. For joint continuity, let $(x_n,\mu_n) \to (x,\mu)$ in $\sfX \times \cP(\sfX)$. Then $(\mu_n)_\alpha \to \mu_\alpha$ in $\cP(\sfX)$ (because $\alpha$ is continuous), $x \in \sfX = \supp(\mu_\alpha)$, and $x_n \in \sfX = \supp((\mu_n)_\alpha)$ trivially; hence by Corollary~\ref{crl:f_jointly_continuous}, $\ff_\alpha(x_n,\mu_n) = \ff(x_n,(\mu_n)_\alpha) \to \ff(x,\mu_\alpha) = \ff_\alpha(x,\mu)$.

\uline{Approximation by $\cA$.} By the assumption on $\cA$, since $\ff_\alpha$ is jointly continuous and $L$-Lipschitz in the first variable, $\ff_\alpha$ belongs to the uniform closure of $\cA$. Thus there exists $\gg \in \cA$ such that
\begin{equation*}
\sup_{(x,\mu) \in \sfX \times \cP(\sfX)} \sfd_\sfY(\gg(x,\mu),\ff_\alpha(x,\mu)) \leq \frac{\varepsilon}{3}.
\end{equation*}

\uline{Conclusion.} For every $\mu \in \cP(\sfX)$, by the triangle inequality in $W_p$,
\begin{equation*}
W_p(\gg(\cdot,\mu)_\sharp \mu, F(\mu)) \leq W_p(\gg(\cdot,\mu)_\sharp \mu, \ff_\alpha(\cdot,\mu)_\sharp \mu) + W_p(\ff_\alpha(\cdot,\mu)_\sharp \mu, F(\mu_\alpha)) + W_p(F(\mu_\alpha), F(\mu)).
\end{equation*}
The first term is bounded by~\eqref{eq:Wp_Lp} and the previous step:
\begin{equation*}
W_p(\gg(\cdot,\mu)_\sharp \mu, \ff_\alpha(\cdot,\mu)_\sharp \mu) \leq \|\sfd_\sfY(\gg(\cdot,\mu),\ff_\alpha(\cdot,\mu))\|_{L^p(\mu)} \leq \frac{\varepsilon}{3}.
\end{equation*}
For the second term, observe that $\ff_\alpha(\cdot,\mu)_\sharp \mu = \ff(\cdot,\mu_\alpha)_\sharp \mu$ and $F(\mu_\alpha) = \ff(\cdot,\mu_\alpha)_\sharp \mu_\alpha$. By the standard estimate~\eqref{eq:lift_lipschitz} and our choice of $\alpha$ which guarantees~\eqref{eq:aux_mu_mutheta},
\begin{equation*}
W_p(\ff(\cdot,\mu_\alpha)_\sharp \mu, \ff(\cdot,\mu_\alpha)_\sharp \mu_\alpha) \leq L W_p(\mu,\mu_\alpha) \leq \frac{\varepsilon}{3}
\end{equation*}
The third term is bounded with the help of~\eqref{eq:aux_mu_mutheta} as well: $W_p(F(\mu_\alpha),F(\mu)) \leq \varepsilon/3$. Summing, $W_p(\gg(\cdot,\mu)_\sharp \mu, F(\mu)) \leq \varepsilon$, as claimed.
\end{proof}

\section{Lagrangian representations of continuous transformations}
\label{sec:lagrangian}

We discuss the notion of Lagrangian representation, namely when we represent $F$ as a transformation of random variables. This concept is linked to the Lions lift \cite{cardaliaguet2010notes,Carmona-Delarue18} and to the study of dissipative evolutions in Wasserstein spaces \cite{cavagnari2023lagrangian,CSS25}. We prove that, for continuous transformations $F$, having a continuous transport representation (in the sense of Definition~\ref{def:meas_continuous_lipschitz_TLp}) is equivalent to having a continuous Lagrangian representation invariant by measure preserving isomorphisms.

In the sequel we denote by $(\Omega,\frB,\P)$ 
a standard Borel space, for instance $\Omega = [0,1]$ and $\P$ is the Lebesgue measure.

We are interested by a representation of $F$ as a transformation of laws of random variables: we look for $\hat \frF$ which transforms a random variable $X \in L^p(\Omega;\sfX)$ into another random variable $Y = \hat \frF(X) \in L^p(\Omega;\sfY)$ such that the law of $Y$ is the image of the law of $X$ by $F$. As understood in the study of Lagrangian representations of dissipative evolutions in Wasserstein spaces, to encode that $F$ is non-splitting, it is natural to require $\hat \frF$ to be invariant by measure preserving isomorphisms.  

\begin{definition}
A map $F : \cP_p(\sfX) \to \cP_p(\sfY)$ has a \emph{Lagrangian representative} if there exists $\hat \frF : L^p(\Omega;\sfX) \to L^p(\Omega;\sfY)$ such that, if $Y = \hat \frF(X)$, then $Y_\sharp \P = F( X_\sharp \P)$. 

We say that the Lagrangian representative is invariant by measure-preserving isomorphism (m.p.i.) if, for every $X \in L^p(\Omega;\sfX)$ and every Borel $g : \Omega \to \Omega$ which leaves $\P$ invariant, $\hat \frF(X \circ g) = \hat \frF(X) \circ g$.
\end{definition}

If $F$ admits a transport representative $\frF$, then it admits a Lagrangian representation invariant by m.p.i. Indeed, with $\frF(\mu) = (\mu, \ff(\cdot,\mu))$ we define $\hat \frF$ by 
\begin{equation}
\label{eq:def_lagrangian_from_transport}
\hat \frF(X) : \omega \mapsto \ff(X(\omega), X_\sharp \P).
\end{equation}
This is a Lagrangian representative of $F$: if $Y = \hat \frF(X)$ then $Y_\sharp \P = \ff(X(\cdot), X_\sharp \P)_\sharp \P = \ff(\cdot,X_\sharp \P)_\sharp (X_\sharp \P) = F(X_\sharp \P)$. Moreover, as $(X \circ g)_\sharp \P = X_\sharp \P$ for every m.p.i. $g$, this Lagrangian representative is invariant by m.p.i. For continuous transformations, we show that the converse actually holds.

\begin{theorem}
\label{thm:Lagrangian}
A map $F : \cP_p(\sfX) \to \cP_p(\sfY)$ has a continuous transport representative (in the sense of Definition~\ref{def:meas_continuous_lipschitz_TLp}) if and only if it has a continuous Lagrangian representative invariant by m.p.i. 
\end{theorem}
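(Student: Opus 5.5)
We argue the two implications separately. Throughout we take $(\Omega,\frB,\P)$ standard and atomless, e.g. $[0,1]$ with Lebesgue measure.

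\emph{Transport $\Rightarrow$ Lagrangian.} Let $\frF$ be a continuous transport representative. By Remark~\ref{rmk:mes_cont} we pick $\ff$ jointly Borel and define $\hat\frF$ by \eqref{eq:def_lagrangian_from_transport}. This is m.p.i.-invariant, as already observed, so only continuity remains. Let $X_n\to X$ in $L^p(\Omega;\sfX)$ and set $\mu_n={X_n}_\sharp\P$, $\mu=X_\sharp\P$. Put $\eta_n=(X_n,X)_\sharp\P\in\Gamma(\mu_n,\mu)$. Then $\|\sfd_\sfX(x,x')\|_{L^p(\eta_n)}=\sfd_{L^p}(X_n,X)\to0$, so $(\eta_n)$ is stagnating. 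Since $\frF(\mu_n)\to\frF(\mu)$ in $\rmT\rmL_p$, Proposition~\ref{prop:char_conv_TLp}(3) gives
\begin{equation*}
\sfd_{L^p(\Omega;\sfY)}\big(\hat\frF(X_n),\hat\frF(X)\big)=\|\sfd_\sfY(\ff(x,\mu_n),\ff(x',\mu))\|_{L^p(\eta_n)}\to0 .
\end{equation*}

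\emph{Lagrangian $\Rightarrow$ transport.} The key tool is a classical approximation lemma on standard atomless spaces. If $Z,Z'\in L^p(\Omega;\sfZ)$ have the same law, there exist m.p.i. $g_n$ with $Z\circ g_n\to Z'$ in $L^p$. I will quote this from the literature on Lagrangian representations \cite{cavagnari2023lagrangian,CSS25}; it is the main external input. We apply it with $\sfZ=\sfX\times\sfY$.

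Fix $\mu$, take $X$ with law $\mu$ and set $Y=\hat\frF(X)$. We define $\gamma(\mu)$ as the law of $(X,Y)$, and we must check two things.

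\textbf{(a) $\gamma(\mu)$ does not depend on $X$.} Let $X'$ have law $\mu$ and $Y'=\hat\frF(X')$. The approximation lemma gives m.p.i. $g_n$ with $X\circ g_n\to X'$. By invariance and continuity,
\begin{equation*}
Y\circ g_n=\hat\frF(X\circ g_n)\to Y' \quad\text{in } L^p .
\end{equation*}
Each $(X\circ g_n,Y\circ g_n)$ has the same law as $(X,Y)$. Passing to the limit, $(X',Y')$ has that same law.

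\textbf{(b) $\gamma(\mu)\in\cPg_p(\sfX\times\sfY)$.} We use Lemma~\ref{lm:char_cpg}. Let $\sigma\in\Gamma(\gamma(\mu),\gamma(\mu))$ satisfy $x=x'$ $\sigma$-a.e. Since $\Omega$ is atomless, we realize $\sigma$ as the law of some $(\tilde X,\tilde Y,\tilde X',\tilde Y')$ on $\Omega$. By (a) we may take $\tilde Y=\hat\frF(\tilde X)$, replacing $X$ by $\tilde X$; then $\tilde X'=\tilde X$ a.s. The approximation lemma applied to the pair gives m.p.i. $g_n$ with $(\tilde X,\tilde Y)\circ g_n\to(\tilde X',\tilde Y')$. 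Hence $\tilde Y\circ g_n\to\tilde Y'$. On the other hand,
\begin{equation*}
\tilde Y\circ g_n=\hat\frF(\tilde X\circ g_n)\to\hat\frF(\tilde X')=\hat\frF(\tilde X)=\tilde Y .
\end{equation*}
Therefore $\tilde Y'=\tilde Y$ a.s., that is $\|\sfd_\sfY(y,y')\|_{L^p(\sigma)}=0$.

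With (a) and (b), we define $\frF(\mu)=\iota^{-1}(\gamma(\mu))$. Its first marginal is $\mu$ and its second is $Y_\sharp\P=F(\mu)$, so \eqref{eq:transportlift_char} holds and $\frF$ is a transport representative. For continuity, let $\mu_n\to\mu$ in $\cP_p(\sfX)$. Theorem~\ref{thm:skorokhod_Wp} yields $X_n\to X$ in $L^p$ with the prescribed laws. Continuity of $\hat\frF$ then gives $(X_n,\hat\frF(X_n))\to(X,\hat\frF(X))$ in $L^p(\Omega;\sfX\times\sfY)$. Hence $W_p(\gamma(\mu_n),\gamma(\mu))\to0$, i.e. $\frF(\mu_n)\to\frF(\mu)$ in $\rmT\rmL_p$.

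The main obstacle is the approximation lemma: the orbit of $Z$ under m.p.i. is $L^p$-dense in the set of random variables with the same law. It must hold for $\sfX\times\sfY$-valued variables and it uses atomlessness of $\Omega$. If it cannot simply be cited, I would prove it by discretizing $\sfZ$ into a fine countable Borel partition and matching level sets of equal measure via isomorphisms of atomless standard spaces.
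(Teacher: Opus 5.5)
Your direct implication is the paper's argument. In the converse you use the same engine as the paper: Lemma~\ref{lm:mpi_approx} combined with m.p.i.-invariance and continuity of $\hat\frF$. Step (a) is correct. Your continuity proof via Theorem~\ref{thm:skorokhod_Wp} is a cleaner shortcut than the paper's, which shows $G=\{(X,\hat\frF(X))_\sharp\P\}$ is closed and then uses compactness of $\Gamma(\mu_n,F(\mu_n))$.

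There is, however, a genuine gap in (b), at ``By (a) we may take $\tilde Y=\hat\frF(\tilde X)$''. Here $\tilde Y$ is not yours to choose: it is a component of a realization of the given coupling $\sigma$. Step (a) only tells you that $(\tilde X,\hat\frF(\tilde X))$ has law $\gamma(\mu)$, i.e.\ the same law as $(\tilde X,\tilde Y)$. Equality of these two laws forces $\tilde Y=\hat\frF(\tilde X)$ a.s.\ only if $\gamma(\mu)$ is concentrated on a graph, which is exactly what (b) sets out to prove, so the inference is circular. If you instead redefine $\tilde Y:=\hat\frF(\tilde X)$, the quadruple $(\tilde X,\hat\frF(\tilde X),\tilde X',\tilde Y')$ in general no longer has law $\sigma$. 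Your conclusion $\tilde Y'=\hat\frF(\tilde X)$ then says nothing about $\|\sfd_\sfY(y,y')\|_{L^p(\sigma)}$.

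The repair uses only your own tools and is exactly the paper's first claim: \emph{if $(Z,W)$ has law $\gamma(\mu)$, then $W=\hat\frF(Z)$ a.s.} The proof goes as follows.
\begin{itemize}
\item By (a), the pairs $(Z,\hat\frF(Z))$ and $(Z,W)$ have the same law.
\item Lemma~\ref{lm:mpi_approx} (on $\sfX\times\sfY$) then gives m.p.i.\ $g_n$ with $Z\circ g_n\to Z$ and $\hat\frF(Z)\circ g_n\to W$.
\item By invariance and continuity, $\hat\frF(Z)\circ g_n=\hat\frF(Z\circ g_n)\to\hat\frF(Z)$, hence $W=\hat\frF(Z)$.
\end{itemize}
Apply this to both $(\tilde X,\tilde Y)$ and $(\tilde X',\tilde Y')$, and use $\tilde X=\tilde X'$ a.s. This gives $\tilde Y=\hat\frF(\tilde X)=\hat\frF(\tilde X')=\tilde Y'$ a.s., which is what Lemma~\ref{lm:char_cpg} requires. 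Your second approximation argument in (b) is in fact this claim applied to the primed pair. The missing step is to apply it to the unprimed pair as well, instead of attributing that step to (a).
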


In this case the map $F$ is necessarily continuous. As we show below in Example~\ref{ex:law_invariance_not_enough}, only imposing $\hat{\frF}$ to be law invariant is not enough to have a transport representative. 
For the proof of this theorem we will use the following classical lemma which can be traced back to \cite{brenier2003approximation}, see \cite[Lemma 6.4]{cardaliaguet2010notes} or \cite[Corollary 3.16]{CSS25} for a proof. 

\begin{lemma}
\label{lm:mpi_approx}
Assume $X,X'$ are two elements of $L^p(\Omega;\sfX)$ such that $X_\sharp \P = X'_\sharp \P$. Then there exists a sequence $(g_n)_{n \in \N}$ of m.p.i. such that $X \circ g_n$ converges to $X'$ in $L^p(\Omega;\sfX)$.
\end{lemma}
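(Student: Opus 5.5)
The plan is a discretization argument: partition $\sfX$ into countably many small pieces, match the preimages of each piece under $X'$ and $X$ by measure-preserving isomorphisms, and glue these together. Throughout, I assume that $(\Omega,\frB,\P)$ is atomless, e.g.\ $[0,1]$ with Lebesgue measure. This is implicitly needed: if $\P$ had atoms, two random variables with the same law could differ by a permutation that no m.p.i.\ realizes.

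\textbf{Step 1 (partition).} Fix $n \geq 1$. Since $\sfX$ is separable, I would choose a countable Borel partition $(A_k^n)_{k \in \N}$ of $\sfX$ into sets of diameter less than $1/n$. For instance, take a dense sequence $(z_k)$, set $A_k^n = B(z_k,1/(2n)) \setminus \bigcup_{j<k} B(z_j,1/(2n))$, and discard the empty pieces. Then set $\Omega_k = X^{-1}(A_k^n)$ and $\Omega'_k = (X')^{-1}(A_k^n)$. Since $X_\sharp \P = X'_\sharp \P$, we have $\P(\Omega_k) = \P(\Omega'_k)$ for every $k$. Both $(\Omega_k)_k$ and $(\Omega'_k)_k$ are Borel partitions of $\Omega$.

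\textbf{Step 2 (local isomorphisms).} For each $k$ with $\P(\Omega_k) > 0$, the spaces $(\Omega'_k, \P\mres\Omega'_k)$ and $(\Omega_k,\P\mres\Omega_k)$ are atomless standard Borel measure spaces of equal finite mass. By the isomorphism theorem for standard measure spaces (e.g.\ \cite[Theorem 9.2.2]{bogachev2007measure}), there exist $\P$-null sets $N'_k \subset \Omega'_k$ and $N_k\subset\Omega_k$ and a Borel bijection $g_k : \Omega'_k\setminus N'_k \to \Omega_k \setminus N_k$ with Borel inverse, pushing $\P\mres\Omega'_k$ onto $\P\mres\Omega_k$.

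\textbf{Step 3 (gluing).} Define $g_n = g_k$ on $\Omega'_k \setminus N'_k$. The exceptional set $N' = \bigcup_k N'_k$ together with the null pieces is a null Borel set, whose image side $N = \bigcup_k N_k$ is also null. On $N'$ I would fix $g_n$ to be any Borel map into $N$ preserving the (zero) measure. The cleanest option is to enlarge $N$ and $N'$ to uncountable null Borel sets of equal cardinality and use a Borel isomorphism between them, so that $g_n$ is even a bijection. Then $g_n$ is Borel and $(g_n)_\sharp\P = \sum_k (g_k)_\sharp(\P\mres\Omega'_k) = \sum_k \P\mres\Omega_k = \P$, so $g_n$ is an m.p.i.

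\textbf{Step 4 (estimate).} For $\P$-a.e.\ $\omega$ there is a $k$ with $\omega \in \Omega'_k\setminus N'_k$. Then $X'(\omega)\in A_k^n$ and $X(g_n(\omega))\in A_k^n$, hence $\sfd_\sfX(X(g_n\omega),X'(\omega)) < 1/n$. Therefore $\sfd_{L^p(\Omega;\sfX)}(X\circ g_n, X') \leq 1/n \to 0$. Here $X\circ g_n \in L^p$ because it has the same law as $X$.

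\textbf{Main obstacle.} The only delicate point is Step 2: invoking the isomorphism theorem for each piece and gluing the countably many isomorphisms into a single Borel, measure-preserving self-map of $\Omega$. One has to check that the restrictions are still standard and atomless, and to handle the null exceptional sets. Everything else is elementary.
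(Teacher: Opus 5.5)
Your argument is correct and is essentially the standard proof behind the references the paper cites in place of a proof (notably \cite[Lemma 6.4]{cardaliaguet2010notes}): partition $\sfX$ into cells of small diameter, match $(X')^{-1}(A_k^n)$ with $X^{-1}(A_k^n)$ via the isomorphism theorem for atomless standard measure spaces, and glue. It even gives $\sfd_\sfX(X\circ g_n,X')<1/n$ $\P$-a.e. Your atomlessness assumption is indeed implicit in the paper, whose model case is $\Omega=[0,1]$ with Lebesgue measure. One piece of bookkeeping in Step 3 needs tidying, and it matters only if you insist on $g_n$ being bijective. The target-side exceptional set must also contain the null cells $X^{-1}(A_k^n)$. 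Any enlargement of the domain-side exceptional set must also be matched by adding its image under the glued map to the target side.
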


\begin{proof}[Proof of Theorem~\ref{thm:Lagrangian}]
\uline{Direct implication}.
Assume first that $\frF$ is a continuous transport representative of $F$. Then recall that we know that $\hat \frF$, defined in~\eqref{eq:def_lagrangian_from_transport}, is a Lagrangian representative invariant by m.p.i. Thus we only have to check that $\hat \frF$ is continuous. 

Let $(X_n)_{n \in \N}$ a sequence converging to $X$ in $L^p(\Omega;\sfX)$, we write $Y_n = \hat \frF(X_n)$, $Y = \hat \frF(X)$ as well as $\mu_n = (X_n)_\sharp \P$ and $\mu = X_\sharp \P$.
As $X_n$ converges to $X$ in $L^p(\Omega;\sfX)$, we know that $\mu_n$ converges to $\mu$ in $\cP_p(\sfX)$. Thus by continuity of $\frF$, $(\mu_n,\ff(\cdot,\mu_n))$ converges to $(\mu,\ff(\cdot,\mu))$ in $\rmT\rmL_p(\sfX;\sfY)$. As moreover $(X_n,X)_\sharp \P$ is stagnating because $X_n$ converges to $X$ in $L^p(\Omega;\sfX)$, we deduce that 
\begin{equation*}
\|\sfd_\sfY(Y_n,Y)\|_{L^p(\Omega)} = \| \sfd_\sfY( \ff(X_n,\mu_n), \ff(X,\mu)  ) \|_{L^p(\Omega)} = \| \sfd_\sfY( \ff(x,\mu_n), \ff(x',\mu)  ) \|_{L^p((X_n,X)_\sharp \P)}
\end{equation*}
also converges to $0$, see Proposition~\ref{prop:char_conv_TLp}(3). It exactly means that $Y_n$ converges to $Y$ as $n \to + \infty$ in $L^p(\Omega;\sfY)$.

\uline{Converse implication}.
Conversely, assume that $\hat \frF$ is a continuous Lagrangian representative invariant by m.p.i. Let us consider the subset of $\cP_p(\sfX \times \sfY)$ defined by 
\begin{equation*}
G = \{ (X,\hat \frF(X))_\sharp \P \ : \ X \in L^p(\Omega;\sfX) \},
\end{equation*}
which we will prove gives us the graph of the transport representative $\frF$.

First we claim that, if $(X,Y)_\sharp \P \in G$, then necessarily $Y = \hat{\frF}(X)$. Indeed, in such a case by definition $(X,Y)_\sharp \P = (X',\hat \frF(X'))_\sharp \P$ for some $X' \in L^p(\Omega;\sfX)$. By Lemma~\ref{lm:mpi_approx} we can find a sequence $g_n$ of m.p.i. such that $(X' \circ g_n, \hat \frF(X') \circ g_n)$ converges in $L^p(\Omega;\sfX \times \sfY)$ to $(X,Y)$. In particular $X' \circ g_n$ converges to $X$, while $\hat \frF(X') \circ g_n = \hat \frF(X' \circ g_n)$ converges to $Y$. By continuity of $\hat \frF$ we obtain $Y = \hat{\frF}(X)$. 

Second, we claim that any $\gamma \in G$ belongs to $\cPg_p(\sfX \times \sfY)$. Indeed, let $\gamma \in G$ and $\sigma \in \Gamma(\gamma,\gamma)$. That means that we can find random variables $X,Y,X',Y'$ with $\sigma = (X,Y,X',Y')_\sharp \P$ and $(X,Y)_\sharp \P = (X',Y')_\sharp \P = \gamma$. From the first claim we deduce $Y= \hat \frF(X)$ and $Y'= \hat \frF(X')$. Thus if $X = X'$ almost surely, then clearly $Y=Y'$ almost surely which implies the claim with Lemma~\ref{lm:char_cpg}.

Third, take $(X_n,\hat \frF(X_n))_\sharp \P = \gamma_n$ a sequence in $G$ and assume that it converges to $\gamma \in \cP_p(\sfX \times \sfY)$. With $\mu$ the first marginal of $\gamma$ and $X'$ such that $X'_\sharp \P = \mu$, we will show that $\gamma_n$ converges to $(X',\hat \frF (X'))_\sharp \P$. Indeed, as ${X_n}_\sharp \P$ converges to $X'_\sharp \P$, by Lemma~\ref{lm:mpi_approx} we can find a sequence $g_n$ of m.p.i. such that $X_n \circ g_n$ converges to $X'$ in $L^p(\Omega;\sfX)$. By continuity of $\hat \frF$, then $\hat \frF(X_n \circ g_n)$ converges to $\hat \frF (X')$. As $\hat \frF$ is invariant by m.p.i., 
\begin{equation*}
\gamma_n = (X_n,\hat \frF(X_n))_\sharp \P = (X_n \circ g_n,\hat \frF(X_n) \circ g_n)_\sharp \P = (X_n \circ g_n,\hat \frF(X_n \circ g_n))_\sharp \P.   
\end{equation*}
Taking the limit $n \to + \infty$, in the right hand side we see it converges to $(X',\hat \frF (X'))_\sharp \P = \gamma$. Thus we have our claim. This claim implies that $G$ is closed. Moreover, taking for $(X_n,\hat \frF(X_n))_\sharp \P$ a constant sequence, as the limit $\gamma$ depends only on $\mu$, we conclude that there exists one element $\gamma \in G$ with $\pi^1_\sharp \gamma = \mu$.

We now are in position to define $\frF$: for any $\mu \in \cP_p(\sfX)$, there exists a unique $\gamma \in G$ with $\pi^1_\sharp \gamma = \mu$ (and automatically $\pi^2_\sharp \gamma = F(\mu)$), and this $\gamma$ belongs to $\cPg_p(\sfX \times \sfY)$. Thus we can define $\frF(\mu) = \iota^{-1}(\gamma)$. 
% Thus it can be written $\gamma = (\ii \times \ff(\cdot,\mu))_\sharp \mu$, and we define $\frF(\mu) = (\mu, f)$. 
If $\mu_n \to \mu$ as $n \to + \infty$, we write $\gamma_n$ for the unique element of $G$ with first marginal $\mu_n$, it belongs to $\Gamma(\mu_n,F(\mu_n))$. As $F(\mu_n) \to F(\mu)$, up to extraction, $(\gamma_n)_{n \in \N}$ converges to an element of $\Gamma(\mu,F(\mu))$. Moreover $G$ is closed thus $(\gamma_n)_{n \in \N}$ necessarily converges to $\gamma$, which means $\frF(\mu_n)$ converges to $\frF(\mu)$ in $\rmT\rmL_p(\sfX;\sfY)$.
\end{proof}

\begin{example}
\label{ex:law_invariance_not_enough}
Instead of imposing $\hat{\frF}$ to be invariant by m.p.i., one could impose the weaker property of law invariance: that $X_\sharp \P = X'_\sharp \P$ implies $\hat{\frF}(X)_\sharp \P = \hat{\frF}(X')_\sharp \P$. However this invariance does not imply the existence of a transport representative.

Indeed take $Y_0 \in L^p(\Omega;\sfY)$ non-constant fixed. Then the constant map $\hat{\frF} : X \mapsto Y_0$ is law invariant and continuous. However it does not have a transport representative: if $X$ is a constant random variable, with law $X_\sharp \P= \delta_x$ for some $x \in \sfX$, then $\hat{\frF}(X)_\sharp \P = {Y_0}_\sharp \P$ which is not a Dirac mass by assumption, thus cannot be the image of $X_\sharp \P$ by a transport map. 
\end{example}

\bibliographystyle{siam}
\bibliography{bibliografia}

\begin{thebibliography}{10}

\bibitem{alberti2023sumformer}
{\sc S.~Alberti, N.~Dern, L.~Thesing, and G.~Kutyniok}, {\em {Sumformer:
  Universal approximation for efficient transformers}}, in Topological,
  Algebraic and Geometric Learning Workshops 2023, PMLR, 2023, pp.~72--86.

\bibitem{aliprantis2006infinite}
{\sc C.~D. Aliprantis and K.~C. Border}, {\em Infinite dimensional analysis: a
  hitchhiker's guide}, Springer Science \& Business Media, 2006.

\bibitem{AGS}
{\sc L.~Ambrosio, N.~Gigli, and G.~Savar{\'e}}, {\em {Gradient flows in metric
  spaces and in the space of probability measures}}, Lectures in Mathematics
  ETH Zurich, Birkh{\"a}user Verlag, 2008.

\bibitem{ambrosio2004topics}
{\sc L.~Ambrosio and P.~Tilli}, {\em Topics on analysis in metric spaces},
  Oxford University Press, 2004.

\bibitem{bergin1999continuity}
{\sc J.~Bergin}, {\em On the continuity of correspondences on sets of measures
  with restricted marginals}, Econom. Theory, 13 (1999), pp.~471--481.

\bibitem{bogachev2007measure}
{\sc V.~I. Bogachev}, {\em Measure theory}, vol.~2, Springer, 2007.

\bibitem{brenier2003approximation}
{\sc Y.~Brenier and W.~Gangbo}, {\em {Approximation of maps by
  diffeomorphisms}}, Calc. Var. Partial Differential Equations, 16 (2003),
  pp.~147--164.

\bibitem{cardaliaguet2010notes}
{\sc P.~Cardaliaguet}, {\em Notes on mean field games}, tech. rep., 2010.

\bibitem{Carmona-Delarue18}
{\sc R.~Carmona and F.~c. Delarue}, {\em Probabilistic theory of mean field
  games with applications. {I}}, vol.~83 of Probability Theory and Stochastic
  Modelling, Springer, Cham, 2018.
\newblock Mean field FBSDEs, control, and games.

\bibitem{cavagnari2023lagrangian}
{\sc G.~Cavagnari, G.~Savar{\'e}, and G.~E. Sodini}, {\em {A Lagrangian
  approach to totally dissipative evolutions in Wasserstein spaces}}, arXiv
  preprint arXiv:2305.05211,  (2023).

\bibitem{CSS25}
{\sc G.~Cavagnari, G.~Savar\'e, and G.~E. Sodini}, {\em Extension of monotone
  operators and {L}ipschitz maps invariant for a group of isometries}, Canad.
  J. Math., 77 (2025), pp.~149--186.

\bibitem{fornasier2023density}
{\sc M.~Fornasier, G.~Savar{\'e}, and G.~E. Sodini}, {\em {Density of
  subalgebras of Lipschitz functions in metric Sobolev spaces and applications
  to Wasserstein Sobolev spaces}}, J. Funct. Anal., 285 (2023), p.~110153.

\bibitem{furuya2024transformers}
{\sc T.~Furuya, M.~V. de~Hoop, and G.~Peyr{\'e}}, {\em {Transformers are
  universal in-context learners}}, arXiv preprint arXiv:2408.01367,  (2024).

\bibitem{GarciaTrillos-Slepcev16}
{\sc N.~Garc\'ia~Trillos and D.~Slep\v{c}ev}, {\em Continuum limit of total
  variation on point clouds}, Arch. Ration. Mech. Anal., 220 (2016),
  pp.~193--241.

\bibitem{trillos2018variational}
\leavevmode\vrule height 2pt depth -1.6pt width 23pt, {\em {A variational
  approach to the consistency of spectral clustering}}, Appl. Comput. Harmon.
  Anal., 45 (2018), pp.~239--281.

\bibitem{garey1979computers}
{\sc M.~R. Garey and D.~S. Johnson}, {\em {Computers and Intractability: a
  Guide to the Theory of NP-Completeness}}, W.H. freeman New York, 1979.

\bibitem{geshkovski2025mathematical}
{\sc B.~Geshkovski, C.~Letrouit, Y.~Polyanskiy, and P.~Rigollet}, {\em A
  mathematical perspective on transformers}, Bull. Amer. Math. Soc., 62 (2025),
  pp.~427--479.

\bibitem{geshkovski2024measure}
{\sc B.~Geshkovski, P.~Rigollet, and D.~Ruiz-Balet}, {\em Measure-to-measure
  interpolation using transformers}, arXiv preprint arXiv:2411.04551,  (2024).

\bibitem{ghossoub2021continuity}
{\sc M.~Ghossoub and D.~Saunders}, {\em {On the continuity of the feasible set
  mapping in optimal transport}}, Econ. Theory Bull., 9 (2021), pp.~113--117.

\bibitem{kallenberg2017random}
{\sc O.~Kallenberg}, {\em Random measures, theory and applications}, vol.~1,
  Springer, 2017.

\bibitem{pinkus1999approximation}
{\sc A.~Pinkus}, {\em {Approximation theory of the MLP model in neural
  networks}}, Acta Numer., 8 (1999), pp.~143--195.

\bibitem{sander2022sinkformers}
{\sc M.~E. Sander, P.~Ablin, M.~Blondel, and G.~Peyr{\'e}}, {\em {Sinkformers:
  Transformers with doubly stochastic attention}}, in International Conference
  on Artificial Intelligence and Statistics, PMLR, 2022, pp.~3515--3530.

\bibitem{thorpe2017transportation}
{\sc M.~Thorpe, S.~Park, S.~Kolouri, G.~K. Rohde, and D.~Slep\v{c}ev}, {\em A
  transportation {$L^p$} distance for signal analysis}, J. Math. Imaging
  Vision, 59 (2017), pp.~187--210.

\bibitem{vaswani2017attention}
{\sc A.~Vaswani, N.~Shazeer, N.~Parmar, J.~Uszkoreit, L.~Jones, A.~N. Gomez,
  L.~Kaiser, and I.~Polosukhin}, {\em Attention is all you need}, Advances in
  neural information processing systems, 30 (2017).

\bibitem{villani2009optimal}
{\sc C.~Villani}, {\em Optimal transport: old and new}, vol.~338, Springer,
  2009.

\bibitem{vuckovic2020mathematical}
{\sc J.~Vuckovic, A.~Baratin, and R.~Tachet~des Combes}, {\em {A mathematical
  theory of attention}}, arXiv preprint arXiv:2007.02876,  (2020).

\bibitem{yun2019transformers}
{\sc C.~Yun, S.~Bhojanapalli, A.~S. Rawat, S.~J. Reddi, and S.~Kumar}, {\em
  {Are transformers universal approximators of sequence-to-sequence
  functions?}}, arXiv preprint arXiv:1912.10077,  (2019).

\end{thebibliography}

\end{document}